\renewcommand*\env@matrix[1][*\c@MaxMatrixCols c]{%
  \hskip -\arraycolsep
  \let\@ifnextchar\new@ifnextchar
  \array{#1}}
\definecolor{TT}{rgb}{0,0.7,0.7}
\tikzset{>=stealth',
     cvertex/.style={circle,draw=black,inner sep=1pt,outer sep=3pt},
     vertex/.style={circle,fill=black,inner sep=1pt,outer sep=3pt},
     star/.style={circle,fill=yellow,inner sep=0.75pt,outer sep=0.75pt},
     tvertex/.style={inner sep=1pt,font=\criptsize},
     gap/.style={inner sep=0.5pt,fill=white}}
\newcommand{\arrowrl}[3][20]
{
\hspace{-5pt}
\begin{tikzpicture}
\node (A) at (0,0) {};
\node (B) at (1,0) {};
\draw[->] ($(A)+(0,0.2)$) -- node [above] {$\scriptstyle f^*$} ($(B)+(0,0.2)$);
\draw [->] ($(B)+(0,0.2)$) -- node [below] {$\scriptstyle f_*$} ($(A)+(0,0.2)$);
\end{tikzpicture}
\hspace{-5pt}
}
\newcommand{\adj}[2][20]{\arrowrl}
\newcommand{\R}{\ensuremath{\mathbb{R}}}
\newcommand{\Z}{\ensuremath{\mathbb{Z}}}
\newcommand{\C}{\ensuremath{\mathbb{C}}}
\newcommand{\Q}{\ensuremath{\mathbb{Q}}}
\newcommand{\F}{\ensuremath{\mathbb{F}}}
\newcommand{\lra}{\ensuremath{\longrightarrow}}
\def\J{\bf J}
\newcommand{\calo}{\ensuremath{\mathcal{O}}} 
\DeclareMathOperator{\Spec}{Spec}
\DeclareMathOperator{\Der}{Der}
\DeclareMathOperator{\depth}{depth}
\DeclareMathOperator{\Hom}{Hom}
\DeclareMathOperator{\Ext}{Ext}
\DeclareMathOperator{\End}{End}
\DeclareMathOperator{\add}{add}
\DeclareMathOperator{\length}{length}
\DeclareMathOperator{\mmod}{mod}
 \DeclareMathOperator{\Se}{S}
\DeclareMathOperator{\ann}{ann}
\newcommand{\mc}[1]{\ensuremath{\mathcal{#1}}}   
\newcommand{\mf}[1]{\ensuremath{\mathfrak{#1}}}   
\newcommand{\rad}{\operatorname{rad}\nolimits}
\newcommand{\supp}{\operatorname{supp}\nolimits}
\newcommand{\gl}{\operatorname{gldim}\nolimits}
\newcommand{\gldim}{\operatorname{gldim}\nolimits}
\newcommand{\gs}{\operatorname{gs}\nolimits}
\newcommand{\proj}{\operatorname{proj}\nolimits}
\newcommand{\CM}{\operatorname{{MCM}}}
\newcommand{\mm}{{\mathfrak{m}}}
\newcommand{\pp}{{\mathfrak{p}}}
\newcommand{\pd}{\operatorname{proj.dim}\nolimits}
\theoremstyle{theorem}
\newtheorem{Thm}{Theorem}[section]         
\newtheorem{Lem}[Thm]{Lemma}
\newtheorem{Cor}[Thm]{Corollary}  
\newtheorem{Prop}[Thm]{Proposition}   
\newtheorem{Qu}[Thm]{Question}
\theoremstyle{remark}
\newtheorem{Bem}[Thm]{Remark}
\newtheorem{ex}[Thm]{Example}
\theoremstyle{definition}
\newtheorem{defi}[Thm]{Definition} 
\title[NC(C)Rs and global spectrum]{Noncommutative (crepant) desingularizations and the global spectrum of commutative rings}
\author{Hailong Dao}
\address{Department of Mathematics, University of Kansas, Lawrence, KS 66045-7523, USA}
\email{hdao@math.ku.edu}
\author{Eleonore Faber}
\address{
Department of Computer and Mathematical Sciences,
University of Toronto at Scarborough,
Toronto, Ont. M1A 1C4,
Canada
}
\email{efaber@math.toronto.edu}
\author{Colin Ingalls}
\address{Department of Mathematics and Statistics, University of New Brunswick, Fredericton, NB. E3B 5A3, Canada}
\email{cingalls@unb.ca}
\thanks{ This material is based upon work supported by the National Science Foundation under Grant No.~0932078 000, while the authors were in residence at the Mathematical Science Research Institute (MSRI) in Berkeley, California, during the spring semester of 2013. \\
H.D. was partially supported by NSF grant DMS 1104017. \\
E.F. was supported by the Austrian Science Fund (FWF) in frame of project J3326.\\
C.I. was supported by an NSERC Discovery grant. \\
2010 Mathematics Subject Classification:  14B05, 14A22, 14E15,  13C14, 16E10. \\
}
\date{\today}
\begin{document}

\begin{abstract}
In this paper we study  endomorphism rings of finite global dimension over not necessarily normal commutative rings. 
These objects have recently attracted attention as noncommutative (crepant) resolutions, or NC(C)Rs, of singularities.
We propose a notion of a NCCR over any commutative ring that appears weaker but  generalizes all previous notions. Our results yield strong necessary and sufficient conditions for the existence of such objects in many cases of interest.  We also give new examples of NCRs of curve singularities,  regular local rings and normal crossing singularities.
Moreover, we introduce and study the global spectrum of a ring $R$, that
is, the set of all possible finite global dimensions of endomorphism
rings of MCM $R$-modules. Finally, we  use a variety of methods to
compute global dimension for many endomorphism rings. 
\end{abstract}

\maketitle


\section{Introduction}

Let $R$ be a commutative ring and $M$ a finitely generated module over
$R$. Let $A =\End_R(M)$. Recall that if $R$ is normal Gorenstein, $M$
is reflexive and $A$ is maximal Cohen-Macaulay with finite global
dimension, it is called a noncommutative crepant resolution (NCCR) of
$\Spec R$, as in  {\cite{vandenBergh04}}. If one
assumes only that $M$ is faithful and $A$ has finite global dimension,
it is called a noncommutative resolution (NCR) of $\Spec R$,
as in \cite{DaoIyamaTakahashiVial}. 

Starting with the spectacular  proof of the three dimensional case of the Bondal--Orlov conjecture \cite{Bridgeland02}, 
suitably interpreted by Van den Bergh \cite{vandenBergh04}, there has
been strong evidence that these objects could be viewed as rather nice
noncommutative analogues of resolution of singularities, as their
names suggest. As such, their study is  an intriguing blend  of
commutative algebra, noncommutative algebra and algebraic geometry and
has recently attracted a lot of interest by many researchers. Questions of existence and construction of NC(C)Rs were considered e.g. in \cite{vandenBergh04, Dao10, DaoIyamaTakahashiVial}, see also \cite{Leuschke12} for an overview about noncommutative resolutions. In general, it is subtle to construct NC(C)Rs and  explicit examples are e.g. in \cite{BLvdB10} (NCCR for the generic determinant), \cite{IyamaWemyss10} (NCCRs via cluster tilting modules), \cite{vandenBergh04} and \cite{vandenBerghflops04}, \cite{Leuschke07} (NCRs for curves), \cite{IyamaWemyss10a} (reconstruction algebras) or \cite{BurbanIyamaKellerReiten} (cluster tilting modules of curves).

In this note we study these objects while relaxing many of the
assumptions usually assumed by previous work. Firstly, our
commutative rings $R$ might not be normal or even a
domain. In addition, the module $M$ might not be maximal Cohen-Macaulay or even reflexive. Let us now discuss why such directions are worthwhile.  Our investigation was initially inspired by the following:

\begin{Qu}{\rm (}Buchweitz \cite{BuchweitzWarwick}{\rm )} \label{Qu:Buchweitz}
When do free divisors admit NCCRs?  
\end{Qu}

Free divisors are connected to numerous areas of mathematics, and have
attracted intense interest recently. They
  are hypersurfaces in (complex) manifolds and are typically not
  normal, and may  even be reducible. For a more
thorough discussion of  this question, see Section \ref{ApFD}.  Very little is known about the above question in general, even over the simplest class of normal crossing divisors, if one excludes the obvious and commutative(!)~ answer, namely the normalizations of such divisors. We are  able to demonstrate certain criteria which provide concrete cases with positive and negative answers. One interesting problem arising along the way is to characterize when the normalization of a free divisor has {\it rational} singularities.  In particular, we can give an example of a free divisor, which does not allow a NCCR. \\

The second main motivation for our work comes from trying to understand the inherent nature of NC(C)Rs. If one wishes to view them as analogues of commutative desingularizations, a study of  such notions over nonnormal singularities is crucial. Similarly, there is no reason why one should study only the cases of maximal Cohen-Macaulay modules. Even over affine spaces, it is useful to know what commutative blow-ups are smooth, yet if one takes noncommutative blow-ups as endomorphism rings of MCM modules, the problem becomes boring: all those modules are free! Thus one could ask:

\begin{Qu} \label{Qu:regularfiniteglobaldim}
If $R$ is a regular local ring or a polynomial ring over a field, which reflexive, non-free modules have endomorphism rings of finite global dimension?
\end{Qu}

This question is already quite difficult, and we can only provide an example  using recent (unpublished) work by Buchweitz--Pham \cite{BuchweitzPham}. 

Last but not least,  from the viewpoint of noncommutative algebra, a fundamental question is:

\begin{Qu} \label{Qu:centre}
Let $A$ be a noncommutative ring of finite global dimension that is
finitely generated as a module over its centre. What is the structure of the centre of $A$?
\end{Qu}

A nice result in this direction comes from \cite{StaffVdB08}, which asserts that the centre of a homologically homogenous algebra finitely generated over a field of characteristic $0$ has at worst rational singularities. In \cite{IY} this result is extended to show that the centre is  log terminal. In general, it is a subtle question even whether $Z(A)$ is normal. \\

We are able to give some partial answers to all of the above questions. Here is a list of our main findings: 

\begin{enumerate}[leftmargin=*]
\item We propose a definition of NCCRs that encompasses Van den Bergh's original definition, but works in more general settings. 
\item We provide a detailed comparative analysis of different, related concepts in the literature: homologically homogenous rings (\cite{BrownHajarnavis84}), definitions of NCCRs in \cite{vandenBergh04} and \cite{IyamaWemyss10}, as well as our own version.  
\item We show that NCCRs factor through normalization of the original singularity, in a manner analogous to the commutative case. 
\item We give strong necessary conditions for existence of NC(C)Rs, for example,  the normalization of $R$ has to have at worst rational singularities in many cases.
\item We compute many explicit examples
    of finite  rank endomorphism rings, in particular over polynomial and power series rings (endomorphism rings of certain reflexive modules), normal crossing divisors (endomorphism rings coming from Koszul algebras, where the global dimension is explicitly determined for some cases), singular curves (where the global dimension of endomorphism rings arising in the normalization algorithm for ADE-curves is computed),  representation generators of rings  of finite $\CM$ type, and cluster-tilting objects over hypersurfaces.  
\end{enumerate}

The paper is organized as follows: Section \ref{Sub:definitionNCCR} deals with noncommutative (crepant) resolutions. We give definitions of NC(C)Rs for not necessarily normal rings:  after commenting on the equivalence of non-singular orders and homologically homogeneous rings (Prop.~\ref{Prop:nonsingularOrderisHomHom}) the goal is to justify our new definition of NCCR. This will be done in Prop.~\ref{nccrovernormal}, where it is shown that a NCCR in our sense corresponds to a NCCR for CM normal rings, as previously defined. In order to establish this result, it is shown that the centre of a NCCR is normal in Prop.~\ref{Prop:centreNCCR}. From this we deduce several conditions on the existence of NCCRs, namely of commutative (Cor.~\ref{Cor:commNCR} ) and generator NCCRs (Prop.~\ref{Prop:generatorNCCR}). From  Prop.~\ref{nccrovernormal} also (3) from above follows. \\
In Section \ref{Sec:ncrandrational} we study NC(C)Rs and in particular their relationship with rational singularities. We show that generator NCCRs (or NCRs of global dimension 1) for singular curves do not exist (Prop.~\ref{Prop:NCRglobaldim1}). Further we study Leuschke's NCRs for ADE-curves, in particular their global dimensions, in \ref{Sub:Leuschke}. For two-dimensional rings it is shown that there exists a NCR if and only if the normalization of the ring has only rational singularities (Thm.~\ref{Thm:NCRdim2}), which yields interesting restrictions on the existence of NCRs. As applications, we obtain in \ref{ApFD} strong conditions on the existence of NC(C)Rs and an example of a free divisor that does not allow a NCR (Example \ref{Ex:freediv}). Using the $a$-invariant, we find a criterion for the normalizations of certain graded Gorenstein rings to have rational singularities (Cor.~\ref{Cor:normalizationrational}). 

In Section \ref{Sec:glspec} we define the global spectrum of a commutative ring. Adapting a lemma of \cite{IyamaWemyss10a} it is shown that the global dimension of a cluster tilting object of a curve is equal to $3$ (Cor.~\ref{Hyper1}). Then the global spectra of the Artinian local rings $S/(x^n)$, where $(S,(x))$ is a regular local ring of dimension $1$, the node and the cusp are computed (Thm.~\ref{Thm:gsArtin}, Prop.~\ref{Prop:gsnode} and \ref{Prop:gscusp}). Moreover it is shown that the global spectrum of a simple singularity of dimension $2$ is $\{2\}$, see Thm.~\ref{Thm:gssimplesurface} and as a corollary we obtain that this property actually characterizes simple singularities in dimension 2. Then the relation of the global spectrum of a ring and an extension is considered.   \\
In Section \ref{Sec:fingldim}, several endomorphism rings of finite global dimension are studied: first it is shown how to transform an endomorphism ring of finite global dimension of a non-reflexive module over a regular ring into a finite dimensional endomorphism ring of a reflexive but not free module, namely the direct sum over the syzygy modules of the residue field, see Thm.~\ref{Thm:colin}. Then, in Section \ref{Sub:NCRnormalcrossing}, NCRs for one of the simplest nonnormal hypersurface singularities are constructed, the normal crossing divisor. Our construction yields NCRs which are never NCCRs. Finally, in Section \ref{Sub:computation}, we review how to compute the global dimension of an endomorphism ring by constructing projective resolutions of its simple modules. The method is illustrated by the example of the $E_6$-curve singularity in example \ref{Ex:glE6}.

\subsection{Conventions and notation}

By $R$ we will always denote a commutative noetherian ring. Additional assumptions on $R$ will be explicitly stated. An $R$-algebra $A$ is a ring with a 
homomorphism $R\rightarrow Z(A).$ Modules and ideals will be right modules. The normalization of $R$ will be denoted by $\widetilde R$.

\section{Definitions of Noncommutative (crepant) resolutions and comparisons} \label{Sub:definitionNCCR}

Here we extend Van den Bergh's \cite{vandenBergh04} concept
  of a noncommutative crepant resolution (NCCR) to a more general
  context.  
As in \cite{IyamaWemyss10}, we say that $A$  is a \emph{non-singular $R$-order} if for all $\mf{p} \in \Spec R$ we have $\gl A_\mf{p}=\dim R_\mf{p}$ and $A$ is $\CM$ over $R$.  As in \cite{BrownHajarnavis84}, we say that $A$ is a \emph{homologically homogeneous}  $R$ algebra if $A$ is finitely generated as an $R$-module, noetherian, and all simple $A$-modules with the same annihilators in $R$ have the same projective dimension.
If $R$ is equicodimensional, i.e., if all its maximal ideals have the same height, then an $R$-algebra $A$ is a 
non-singular order if and only if $A$ is homologically homogeneous\footnote{see Prop. \ref{Prop:nonsingularOrderisHomHom} for a proof of this fact.}.
We recall Van den Bergh's original definition. 
\begin{defi}
 Let $R$ be a commutative noetherian normal Gorenstein domain. We say that a reflexive $R$-module $M$ gives a NCCR of $R$ (or $\Spec R$), if $A=\End_R M$ is a non-singular $R$-order (equivalently, $A$ is homologically homogenous).
\end{defi} 
It was shown in \cite{vandenBergh04} that for a Gorenstein domain $R,$ an endomorphism ring $A=\End_R M$ is a NCCR if $M$ is reflexive, $\gl A < \infty $ and $A$ is $\CM$ over $R$. 
\begin{Bem}
Iyama and Wemyss \cite[Definition 1.7]{IyamaWemyss10} give the same definition for a Cohen-Macaulay ring $R$. However, for nonnormal rings, the condition that $M$ is reflexive is quite strong and not easily detected. 
\end{Bem}

\begin{defi} \label{Def:nccr}
Let $R$ be a commutative noetherian ring and let $M$ be a finitely
generated torsion-free  module such that $\supp M = \supp R$.  We say that $M$ gives
a \emph{noncommutative crepant resolution (NCCR)} of $R$ (or $\Spec R$) if $A=\End_R
M$ is a non-singular order.
\end{defi}

\begin{Bem}
We will show in Proposition~\ref{nccrovernormal} that a NCCR $A$ of a reduced ring $R$ is 
isomorphic to 
a NCCR of $\widetilde{R}$, the normalization of $R$. Thus, if $R$ is a normal domain then the definition of NCCR agrees with the definition of  NCCR in Iyama and Wemyss.
In the case where $R$ is a normal Gorenstein domain, this definition reduces to van den Bergh's definition.
\end{Bem}

Recently, the weaker concept of \emph{noncommutative resolution (NCR)} was introduced in \cite{DaoIyamaTakahashiVial}. We give a slightly weaker version here, replacing faithfulness with having full support:  

\begin{defi} 
Let $R$ be a commutative noetherian ring and $M$ a finitely generated $R$-module. Then $M$ is said to give a NCR of $R$ (or $\Spec R$) if $\supp M =\supp R$ and $A=\End_R M$ has finite global dimension. Then we also say that $A$ is a NCR of $R$. 
\end{defi}

\begin{Bem}
Let $R_{\text{red}}$ be $R$ modulo the nilradical. We remark that if
$M$ gives a NCCR over $R$, then it is a module over
$R_{\text{red}}$. Since the nilradical lies inside every prime ideal,
it suffices to localize at a minimal prime and assume $R$ is
artinian.   Since $\gldim \End_R M=0$, we know  $A=\End_R M$ is
semisimple, so its center $Z(A)$ must be a product of fields. But the center has to contain $R/\ann(M)$, so the latter is reduced.  Thus, when talking about NCCRs one can safely assume that $R$ is reduced (and $M$ is faithful). 

The situation is quite different for NCRs. In fact, one of our motivations for this notion is that interesting endomorphism rings of different finite global dimensions exists over artin rings and have been heavily studied as part of representation theory of such rings.  
\end{Bem}

For $M \in \mmod R$ we denote by $\add M$ the subcategory of $\mmod R$ that consists of direct summands and finite direct sums of copies of $M$. 
An $R$-module $M$ is a \emph{generator} if $R$ is in $\add M$.
Moreover, we say that $A=\End_R(M)$ is a \emph{generator NC(C)R} if $M$ is a
generator. \\

Before we study  existence questions of NC(C)Rs of not necessarily normal rings, let us  give two examples, in which nonnormal rings play a prominent r\^ole.

\begin{ex}
Let $R$ be a one-dimensional Henselian local reduced noetherian
ring. Then the normalization  $\widetilde R$ is a NCCR of $R$, since
 $\End_R(\widetilde R)=\End_{\widetilde R}(\widetilde R)$. One also has $\widetilde R=\End_R(\mc{C})$, the endomorphism ring of the conductor ideal, see \cite{deJongvanStraten}.   By \cite{Leuschke07} one can construct a generator NCR of $R$, which is (for $R$ nonnormal) by Prop.~\ref{Prop:NCRglobaldim1} never of global dimension 1, that is, it is never a NCCR.
\end{ex}

\begin{ex}
It is well-known that $2$-dimensional simple singularities (rational double points) have a NCCR, coming from the McKay correspondence, see \cite{BKR, Leuschke12}. From this one can build a NCCR of certain free divisors: let $R$ be
 the $2$-dimensional hypersurface singularity $\C\{x,y,z\}/(h)$, with $h=xy^4+y^3z+z^3$. Then $D=\{h=0\}$ is a free divisor in $(\C^3,0)$ (Sekiguchi's $B_5$ \cite{Sekiguchi08}), whose normalization $\widetilde D$ is again a hypersurface given by the equation $\widetilde h=xy+uy+u^3$, where $u=\frac{z}{y}$. Note that $\widetilde D=\Spec \widetilde R$, where $\widetilde R=\C\{x,y,u\}/(\widetilde h)$ has an $A_2$-singularity. Thus $\widetilde{R}$ has a NCCR $A=\End_{\widetilde R}M$, with $M=\bigoplus_{i=1}^3M_i$, where $M_i$ are the indecomposable $\CM$-modules of $\widetilde R$. Since $\End_R M=\End_{\widetilde R} M$, the endomorphism ring $A$ also yields a NCCR of $R$. \\
\end{ex}

Next, we show that the concepts of non-singular order and
homologically homogeneous coincide when $R$ is equicodimensional.  We
begin with a review of some basic results concerning simple $A$-modules
as modules over $R$.
This material is well-known, but we include proofs for completeness.  The
first two statements and proofs are straight forward generalizations
of Prop.~5.7 and Cor.~5.8 of \cite{AtiyahMacDonald} to the
  noncommutative setting.
  
\begin{Lem}
Let $Z$ be an commutative integral domain and with $Z\subseteq D$ a division 
algebra and $D$ a finitely generated module over $Z$.  Then $Z$ is a field.
\end{Lem}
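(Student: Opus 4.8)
The plan is to reduce the statement to showing that every nonzero element $z \in Z$ is already invertible inside $Z$; since $Z$ is assumed to be an integral domain, that is precisely the assertion that $Z$ is a field. So fix $z \in Z$ with $z \neq 0$. Because $D$ is a division algebra, the inverse $z^{-1}$ certainly exists in $D$, and the whole problem is to verify that this inverse lands back in $Z$. Before doing so I would record the basic structural point that $Z$ sits inside $D$ as a \emph{central} subring: the $Z$-module structure on $D$ is induced by the inclusion $Z \subseteq D$, so every element of $Z$ commutes with all of $D$, and hence $z^{-1}$ is central as well (if $z$ commutes with $d$ then so does $z^{-1}$). Consequently, multiplication by $z^{-1}$ is a $Z$-linear endomorphism $\phi$ of $D$.

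The key step is to manufacture a monic integral equation for $z^{-1}$ over $Z$. Here I would invoke the determinant trick (Cayley--Hamilton): since $D$ is finitely generated as a $Z$-module and $\phi$ is $Z$-linear, $\phi$ satisfies a monic polynomial $p(t) = t^n + a_{n-1}t^{n-1} + \cdots + a_1 t + a_0 \in Z[t]$. Evaluating the relation $p(\phi) = 0$ on the element $1 \in D$ (and using $\phi^k(1) = z^{-k}$) gives
\[
z^{-n} + a_{n-1}\,z^{-(n-1)} + \cdots + a_1\, z^{-1} + a_0 = 0 \quad \text{in } D.
\]
Multiplying this identity through by $z^{n-1} \in Z$ and isolating the single surviving negative power yields
\[
z^{-1} = -\bigl(a_{n-1} + a_{n-2}\,z + \cdots + a_1\,z^{n-2} + a_0\,z^{n-1}\bigr),
\]
whose right-hand side manifestly lies in $Z$. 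Hence $z^{-1} \in Z$, so $z$ is a unit of $Z$, and $Z$ is a field.

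The only genuine subtlety, and the point I would watch most carefully, is the passage from ``finitely generated as a module'' to ``integral,'' which is automatic in the commutative Atiyah--MacDonald setting but requires justification here because $D$ is noncommutative. What makes it go through is centrality: since $z$, and therefore $z^{-1}$, commutes with all of $D$, multiplication by $z^{-1}$ is honestly $Z$-linear and the determinant trick applies verbatim; equivalently, $Z[z^{-1}]$ is a commutative subring of $D$ and the classical commutative integrality argument can be run inside it. Everything else is routine manipulation of the integral equation, so I expect no further obstacles.
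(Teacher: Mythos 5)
Your proof is correct and follows essentially the same route as the paper's: produce a monic integral equation for $z^{-1}$ over $Z$ (the paper simply asserts integrality, which you justify via the determinant trick) and multiply through by $z^{n-1}$ to conclude $z^{-1} \in Z$. The only quibble is that you assert $Z$ is central in $D$, which is not among the hypotheses and is not needed --- commutativity of $Z$ already forces $z^{-1}$ to commute with $Z$, which is all the argument uses.
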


\begin{proof}
Let $z \in Z$ be non-zero.  So $z^{-1} \in D$, commutes with $Z$ and  is integral 
over $Z$.
So there is a polynomial that $z^{-1}$ satisfies
$$z^{-n} + a_1z^{-n+1} + \cdots + a_n=0$$ with $a_i \in A$.
and so we see that $z^{-1}=-(a_1+ \cdots +a_nz^{n-1}) \in Z$.
\end{proof}

\begin{Lem} \label{lem:maxideals}
Let $A$ be finitely generated as a module 
over a central subring $R.$
Let $\mf{m}_A \subset A$ be a maximal left ideal of $A$.  Then $\mf{m}_A \cap R$
is a maximal ideal of $R$.
\end{Lem}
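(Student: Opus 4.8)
The plan is to reduce the statement to the previous Lemma by manufacturing, out of the maximal left ideal $\mf{m}_A$, a division algebra sitting over the contraction $\mf{p} := \mf{m}_A \cap R$. First I would pass to the simple left $A$-module $S := A/\mf{m}_A$, which is cyclic, generated by $\bar 1$. Since the structure map $R \to A$ lands in $Z(A)$, for $r \in R$ we have $r \in \mf{m}_A$ if and only if $r\bar 1 = 0$ in $S$; and because $r$ is central, $r\bar 1 = 0$ forces $r(a\bar 1) = (ra)\bar 1 = a(r\bar 1) = 0$ for every $a \in A$, i.e. $r$ annihilates all of $S$. Hence $\mf{p} = \ann_R(S)$. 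As $R$ is commutative this is a two-sided ideal, and it is proper since $\mf{m}_A$ is a proper left ideal (so $1 \notin \mf{p}$).

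Next I would invoke Schur's Lemma to get that $D := \End_A(S)$ is a division ring, and note that $R$ acts on $S$ by central scalars, giving a ring map $R \to D$ whose image lies in the centre of $D$ and whose kernel is exactly $\ann_R(S) = \mf{p}$. Thus $Z := R/\mf{p}$ embeds as a commutative (central) subring of the division ring $D$; in particular $Z$ is an integral domain. To apply the previous Lemma to the pair $Z \subseteq D$, I must check that $D$ is a finitely generated $Z$-module. Here $S$ is cyclic over $A$ and $A$ is finite over $R$, so $S$ is a finite $R$-module, hence a finite $Z$-module. Every element of $D = \End_A(S)$ is in particular $R$-linear, so $Z$-linear, giving $D \subseteq \End_Z(S) \subseteq \Hom_Z(S,S)$; since $R$ (and hence $Z$) is noetherian and $S$ is finitely generated, $\Hom_Z(S,S)$ is a finite $Z$-module, and therefore so is its $Z$-submodule $D$.

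With these hypotheses verified, the previous Lemma applies verbatim to $Z \subseteq D$ and yields that $Z = R/\mf{p}$ is a field, i.e. $\mf{p} = \mf{m}_A \cap R$ is a maximal ideal of $R$, as claimed. I expect the routine part to be the bookkeeping with the central action; the two genuinely load-bearing steps are the identification $\mf{p} = \ann_R(S)$ (which is what makes $Z$ act faithfully on $S$ and embed in $D$) and the finite generation of $D$ over $Z$, where noetherianity of $R$ together with $\Hom$-finiteness of finitely generated modules is essential. This mirrors the commutative argument of Cor.~5.8 of \cite{AtiyahMacDonald}, with $D = \End_A(S)$ playing the role of the residue field of the contracted prime.
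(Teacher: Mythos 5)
Your proposal is correct and follows essentially the same route as the paper: contract via the annihilator of the simple module $S=A/\mf{m}_A$, embed $R/(\mf{m}_A\cap R)$ into the division ring $\End_A(S)$ by Schur's Lemma, verify finite generation, and invoke the preceding lemma. You supply slightly more detail than the paper on why $\End_A(S)$ is finitely generated over $R/(\mf{m}_A\cap R)$ (via $\End_A(S)\subseteq\End_Z(S)$ and noetherianity), but the argument is the same.
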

\begin{proof}
Recall that $\End_A(A/\mf{m}_A)$ is a division algebra by Schur's Lemma.  Left 
multiplication by elements of $R$ gives an injective map $R/(\mf{m}_A \cap R) 
\rightarrow \End_A(A/\mf{m}_A).$  So $R/(\mf{m}_A \cap R)$ is an integral domain 
and $\End_A(A/\mf{m}_A)$ is finitely generated as a module over  $R/(\mf{m}_A \cap R)$.
So we are done by the above lemma.
\end{proof}

So we have shown that there is a map
$$ \left\{ \mbox{simple }A\mbox{ modules} \right\}
\longleftrightarrow \left\{ \mbox{maximal left ideals
      of }A \right\} \longrightarrow \left\{ \mbox{maximal ideals of } R \right\}$$
Now we want to analyze the fibre of this map over a particular maximal
ideal in $R$.  So we pass to a local ring.

\begin{Lem}\label{simplesupportm}
Let $(R,\mf{m}_R)$ be a noetherian local ring and let $A$ be a noncommutative algebra with $R$ a central subring and $A$ finitely generated as a $R$-module.
Then for any maximal right ideal $\mf{m}_A$ of $A$ we have that $A\mf{m}_R  \subseteq \mf{m}_A$
and $\mf{m}_R=\mf{m}_A \cap R$.  Any simple $A$ module $A/\mf{m}_A$ is isomorphic as an $R$-module to a finite sum of copies of $R/\mf{m}_R.$
\end{Lem}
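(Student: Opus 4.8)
The plan is to work throughout with the simple right $A$-module $S = A/\mf{m}_A$ and to exploit that $R$ sits inside the centre of $A$; the three assertions will then come out in the order (1) $A\mf{m}_R \subseteq \mf{m}_A$, then (2) $\mf{m}_R = \mf{m}_A \cap R$, and finally (3) the $R$-module structure of $S$. First I would record that $S$ is a nonzero finitely generated $R$-module: it is nonzero because $\mf{m}_A$ is a proper right ideal, and it is finitely generated over $R$ as a quotient of $A$, which is finitely generated over $R$ by hypothesis.

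The key step is to show $S\mf{m}_R = 0$. Since $R$ maps into $Z(A)$, the subset $S\mf{m}_R$ is stable under the right $A$-action: for $s \in S$, $x \in \mf{m}_R$ and $a \in A$ one has $(sx)a = s(xa) = s(ax) = (sa)x \in S\mf{m}_R$, using that $x$ is central. Hence $S\mf{m}_R$ is an $A$-submodule of $S$. As $S$ is simple, $S\mf{m}_R$ is either $0$ or all of $S$. If $S\mf{m}_R = S$, then Nakayama's lemma applied to the finitely generated module $S$ over the noetherian local ring $(R,\mf{m}_R)$ forces $S = 0$, a contradiction. Therefore $S\mf{m}_R = 0$, which is precisely the statement $A\mf{m}_R \subseteq \mf{m}_A$, giving (1).

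For (2), from (1) we get $\mf{m}_R = 1\cdot \mf{m}_R \subseteq A\mf{m}_R \subseteq \mf{m}_A$, so $\mf{m}_R \subseteq \mf{m}_A \cap R$; conversely $\mf{m}_A \cap R$ is a proper ideal of $R$ because $1 \notin \mf{m}_A$, and since $R$ is local every proper ideal lies in $\mf{m}_R$, whence $\mf{m}_A \cap R \subseteq \mf{m}_R$ and equality holds. (Alternatively, this is the right-handed version of Lemma~\ref{lem:maxideals} together with locality of $R$.) For (3), the vanishing $S\mf{m}_R = 0$ means $S$ is a module over the residue field $k = R/\mf{m}_R$, i.e.\ a $k$-vector space; being finitely generated over $R$ it is finite-dimensional over $k$, say of dimension $n$, so $S \cong (R/\mf{m}_R)^{\oplus n}$ as $R$-modules.

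The only points that require care are that centrality of $R$ is genuinely needed to make $S\mf{m}_R$ an $A$-submodule, which is where the hypothesis $R \subseteq Z(A)$ enters, and that Nakayama applies, for which one needs $S$ to be nonzero and finitely generated over the local ring $R$; neither presents a serious obstacle.
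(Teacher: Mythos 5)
Your proof is correct and follows essentially the same route as the paper's: show $S\mf{m}_R$ is an $A$-submodule using centrality of $R$, conclude it is zero by simplicity plus Nakayama, deduce the containment $A\mf{m}_R\subseteq\mf{m}_A$ and the equality $\mf{m}_R=\mf{m}_A\cap R$ from properness of $\mf{m}_A\cap R$, and read off the $R$-module structure of $S$ as a finite-dimensional vector space over the residue field. No gaps.
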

\begin{proof}
Let $S$ be the simple right module $A/\mf{m}_A$.  Note that
$$\mf{m}_R S = \mf{m}_R AS=A\mf{m}_R S \subseteq S$$
so $\mf{m}_RS$ is a left $A$ module which is submodule of a simple
module and so $\mf{m}_RS$ is either zero or $S$,
but by Nakayama's Lemma, since $S \neq 0,$ we have that $\mf{m}_RS=0$ and therefore
we can conclude that $\mf{m}_RA \subseteq \mf{m}_A$.  Now note that 
$$\mf{m}_R \subseteq \mf{m}_RA \cap R \subseteq \mf{m}_A \cap R \subset R$$
where the last containment is strict since $1 \notin \mf{m}_A$.
So we must have equality where possible and so $\mf{m}_R=\mf{m}_A \cap R$.
So we see that the annihilator of $S$ is $\mf{m}_R$ and since $A$ is a finitely generated $R$ module, the last statement follows.
\end{proof}

Let $\rad A$ denote the Jacobson radical of $A$.

\begin{Cor}
Let $(R,\mf{m}_R)$ be a noetherian local ring and let $A$ be a
noncommutative algebra with $R$ a central subring and $A$ finitely
generated as a $R$-module.  Then $\mf{m}_RA \subseteq \rad A$, and
$\mf{m}_R \subseteq (\rad A) \cap R$.
\end{Cor}

\begin{Cor}\label{finsimp}
There are only finitely many simple $A$-modules up to isomorphism, when $A$ is a finitely generated module over a central local noetherian subalgebra $(R,\mf{m})$.  
\end{Cor}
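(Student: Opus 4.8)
The plan is to reduce the statement to the well-known finiteness of simple modules for a finite-dimensional algebra over a field. First I would invoke the preceding corollary, which gives $\mf{m}A \subseteq \rad A$. Since the Jacobson radical annihilates every simple module, every simple right $A$-module $S$ satisfies $S\cdot \mf{m}A = 0$, so $S$ descends to a module over the quotient algebra $\bar A := A/\mf{m}A$. Conversely, any simple $\bar A$-module is a simple $A$-module via the projection $A \twoheadrightarrow \bar A$, and since the two module structures have the same submodule lattice, this sets up a bijection between isomorphism classes of simple $A$-modules and of simple $\bar A$-modules. Thus it suffices to count the simple $\bar A$-modules.

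Next I would observe that $\bar A$ is a finite-dimensional algebra over the field $k := R/\mf{m}$: because $A$ is finitely generated as an $R$-module, the quotient $\bar A = A/\mf{m}A$ is a finitely generated $R/\mf{m}$-module, i.e. a finite-dimensional $k$-vector space, and the ring structure makes it a $k$-algebra. Finally, a finite-dimensional $k$-algebra has only finitely many simple modules up to isomorphism, which I would see by passing to the semisimple quotient $\bar A/\rad \bar A$: by Artin--Wedderburn this is a finite product of matrix algebras over division $k$-algebras, whose simple modules are finite in number, and they coincide with the simple $\bar A$-modules since $\rad \bar A$ acts as zero on every simple. Combined with the bijection of the first step, this yields the corollary.

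There is no real obstacle here: the essential input, $\mf{m}A \subseteq \rad A$, is supplied by the preceding corollary, so that the problem over the local base $R$ collapses to a problem over the residue field $k$. The only points meriting a word of care are that $\rad A$ genuinely annihilates all simples (immediate from its characterization as the intersection of the annihilators of the simple modules) and that reduction modulo $\mf{m}A$ induces an honest bijection on isomorphism classes of simples.
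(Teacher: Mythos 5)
Your argument is correct and follows the same route as the paper: both use the containment $\mf{m}A \subseteq \rad A$ from the preceding corollary to reduce to the finite-dimensional $R/\mf{m}$-algebra $A/\mf{m}A$, whose simple modules are finite in number and account for all simple $A$-modules. The extra detail you supply (the bijection on isomorphism classes and the Artin--Wedderburn step) is just a fuller writing-out of what the paper leaves implicit.
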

\begin{proof}
Since $\rad A \supseteq \mf{m}_RA$ and $A/\mf{m}_RA$ is a finite dimensional algebra, we 
have finitely many simple modules for $A/(\rad A)$, and all simple $A$-modules are 
given by simple modules of $A/(\rad A)$ up to isomorphism.
\end{proof}

It is certainly well-known to experts that non-singular orders are
homologically homogeneous, and conversely, over equicodimensional
rings but for the sake of completeness we include here a proof of this
fact, using~\cite{BrownHajarnavis84} for one direction.

\begin{Prop} \label{Prop:nonsingularOrderisHomHom}
Suppose that $R$ is a commutative noetherian and equicodimensional ring of Krull-dimension $d$, and let 
$A$ be an $R$-algebra that is finitely generated as an $R$-module.
Then $A$ is a nonsingular order if and only if $A$ is homologically homogeneous.   
In particular, a NCCR over an equicodimensional ring is homologically homogenous. 
\end{Prop}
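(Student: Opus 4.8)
The plan is to verify the two implications separately, localizing at primes of $R$ and using the description of simple $A$-modules from Lemmas~\ref{lem:maxideals} and~\ref{simplesupportm} together with Corollary~\ref{finsimp}. Throughout, note that $A$ is automatically noetherian, being finitely generated over the noetherian ring $R$, and that both properties in question are local over $\Spec R$. The main input will be a noncommutative Auslander--Buchsbaum formula: if $(R,\mf{m})$ is local and $M$ is a finitely generated $A$-module with $\pd_A M<\infty$, then $\pd_A M+\depth_R M=\depth_R A$. Since by Lemma~\ref{simplesupportm} every simple $A$-module is, as an $R$-module, a finite sum of copies of $R/\mf{m}$ and hence has depth $0$, this formula forces every simple $A$-module of finite projective dimension to satisfy $\pd_A S=\depth_R A$.

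For the implication non-singular order $\Rightarrow$ homologically homogeneous, I would fix a maximal ideal $\mf{m}$ of $R$ and pass to $A_\mf{m}$. By assumption $\gl A_\mf{m}=\dim R_\mf{m}<\infty$ and $A_\mf{m}$ is maximal Cohen--Macaulay over $R_\mf{m}$, so $\depth_{R_\mf{m}}A_\mf{m}=\dim R_\mf{m}$. Every simple $A_\mf{m}$-module then has finite projective dimension, and the displayed formula gives $\pd_{A_\mf{m}}S=\dim R_\mf{m}$ for all of them; since $\pd_A S=\pd_{A_\mf{m}}S$ for a simple module supported only at $\mf{m}$, all simple $A$-modules with central annihilator $\mf{m}$ share the projective dimension $\dim R_\mf{m}$. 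This is exactly homological homogeneity, and it requires no equicodimensionality hypothesis. In particular, a NCCR, being by definition a non-singular order, is homologically homogeneous, which gives the final assertion.

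For the converse I would invoke \cite{BrownHajarnavis84}: a homologically homogeneous algebra $A$ is a maximal Cohen--Macaulay module over its centre $Z:=Z(A)$, the ring $Z$ is Cohen--Macaulay, and $\gl A_\mf{q}=\operatorname{ht}_Z\mf{q}$ for every prime $\mf{q}$ of $Z$. It then remains to transport these statements across the module-finite extension $R\to Z$ and to recover the value $\dim R_\mf{p}$ at \emph{every} prime $\mf{p}$ of $R$, not merely at the maximal ideals controlled by the simple modules. Writing $A_\mf{p}$ as a module-finite $Z_\mf{p}$-algebra, its global dimension is the supremum of the $\gl A_\mf{q}=\operatorname{ht}_Z\mf{q}$ over the finitely many primes $\mf{q}$ of $Z$ lying over $\mf{p}$; the task is to identify this supremum with $\dim R_\mf{p}=\operatorname{ht}_R\mf{p}$, and to deduce that $A$ is Cohen--Macaulay over $R$ from its being Cohen--Macaulay over the Cohen--Macaulay ring $Z$.

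The main obstacle is precisely this dimension bookkeeping. Integrality of $R\to Z$ supplies lying-over, going-up and incomparability, hence $\operatorname{ht}_Z\mf{q}\le\operatorname{ht}_R\mf{p}$ whenever $\mf{q}\cap R=\mf{p}$, so that $\gl A_\mf{p}\le\dim R_\mf{p}$; the reverse inequality needs a prime $\mf{q}$ over $\mf{p}$ of full height $\operatorname{ht}_R\mf{p}$. This is where equicodimensionality of $R$ enters: combined with the fact that $Z$ is Cohen--Macaulay, hence catenary and equidimensional, it lets me match a maximal chain of primes below $\mf{p}$ in $R$ with one below a suitable $\mf{q}$ in $Z$, using $\dim Z/\mf{q}=\dim R/\mf{p}$. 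Carrying out this matching carefully---and likewise checking that maximal Cohen--Macaulayness passes from $Z$ down to $R$---is the delicate part; everything else is formal.
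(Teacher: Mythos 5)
Your forward implication is essentially the paper's own argument: localize at a maximal ideal, use Lemma~\ref{simplesupportm} to see that a simple $A$-module has depth zero over $R$, and count depth along a finite projective resolution whose nonzero terms are $\CM$ over $R$. Packaging this as a noncommutative Auslander--Buchsbaum formula is the same computation, and it is legitimate here precisely because $A$ is assumed $\CM$ over $R$, which forces every nonzero finitely generated projective $A_{\mf{m}}$-module to have depth $\dim R_{\mf{m}}$; note the formula as you state it is not valid for arbitrary module-finite algebras. Your remark that this direction needs no equicodimensionality also agrees with the paper.

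The converse is where you genuinely diverge, and where your proposal has a real gap. You route through the centre $Z=Z(A)$ (via the results of \cite{BrownHajarnavis84} that $A$ is $\CM$ over the Cohen--Macaulay ring $Z$ and $\gl A_{\mf{q}}=\mathrm{ht}_Z\mf{q}$) and then must produce, for each $\mf{p}\in\Spec R$, a prime $\mf{q}$ of $Z$ over $\mf{p}$ with $\mathrm{ht}_Z\mf{q}=\mathrm{ht}_R\mf{p}$, and must descend $\CM$-ness from $Z$ to $R$. You flag this as ``the delicate part'' but do not carry it out, and it is not merely routine: going-down is unavailable since $R$ need not be normal, and the dimension formula $\mathrm{ht}_R\mf{p}+\dim R/\mf{p}=\dim R$ that your matching argument relies on requires catenary hypotheses on $R$ that are not assumed (equicodimensionality alone does not give it). The paper sidesteps the centre entirely: Theorem 2.5 of \cite{BrownHajarnavis84}, applied to $A$ as an $R$-algebra together with the localization result Theorem 3.5 of loc.~cit., gives directly that $\gldim A=\dim A=\dim R$ and that $\pd_A S=\mathrm{ht}(\mf{m}_A)=g(\mf{m}_A\cap R,A)$ for each simple $S=A/\mf{m}_A$. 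Since $\mf{m}_A\cap R=\mf{m}_R$ by Lemma~\ref{lem:maxideals}, the grade term is just $\depth_R A$, while $\mathrm{ht}(\mf{m}_A)=\dim A=\dim R=\dim R_{\mf{m}}$ by equicodimensionality; this yields $\gl A_{\mf{m}}=\dim R_{\mf{m}}$ and $\depth_{R_{\mf{m}}}A_{\mf{m}}=\dim R_{\mf{m}}$ in one stroke, with no comparison between $\Spec R$ and $\Spec Z$ ever needed. I would replace your detour through $Z$ with this direct use of Theorem 2.5.
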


\begin{proof}
Since the definition of non-singular order is local, and the property
of being homologically homogeneous  localizes by Theorem 3.5
of \cite{BrownHajarnavis84}, we may assume that $R$ is local.
We first assume that $A$ is a non-singular order.  Let $S$ be a simple
$A$-module. It follows from  Lemma~\ref{simplesupportm}
that $S$ is an semi-simple $R$-module.

Consider a projective resolution of $S$ over $A$:
$$0 \longrightarrow P_n \longrightarrow \dots \longrightarrow P_0 \longrightarrow S \longrightarrow 0$$
Since $\gl A=d$, we have $n\leq d$.  Each $P_i$ is MCM over $R$ or
$0$, and $\depth S=0$. Thus, by counting depth over $R$, 
we have that $n\geq d$. 

The converse follows immediately from Theorems 2.5 
and 3.5 of~\cite{BrownHajarnavis84}, but we will include an explanation of their
results.
Let $A$ be homologically homogeneous over $R$. 
Now
Theorem 2.5~\cite{BrownHajarnavis84} says that the Krull dimension of $A$
is equal to the right global dimension of $A$.  Since $A$ is a finitely generated $R$ module, we know that the Krull dimension of $A$ is that of $R$ so 
we have that $\dim R = \gldim A$ for all localizations.
Let $S$ be a simple right $A$-module and let $\mf{m}_A$ be corresponding maximal right ideal.
Now Theorem 2.5~\cite{BrownHajarnavis84} says that the grade 
$g(\mf{m}_A \cap R,A)=\mathrm{ht}(\mf{m}_A)=\pd S$.  Since $\mf{m}_A \cap R=\mf{m}_R$ by Lemma~\ref{lem:maxideals}, the maximal ideal of $R$, we see that the grade $g(\mf{m}_A \cap R,A)=\depth A$.
Lastly $\mathrm{ht}(\mf{m}_A)$ is the Krull dimension of $A$, which is the Krull dimension of $R$ and since we are in the equicodimensional case, we see that $A$ is a $\CM$-module over $R$.
\end{proof}

In the introduction the question about the centre of a noncommutative ring was raised, see Question \ref{Qu:centre}. Here we show  that in the case of a NCCR of a commutative noetherian reduced ring $R,$ the centre is always the normalization of $R$. However, in the case of a generator NCR, the centre is $R$ itself. This yields restrictions on the existence of NC(C)Rs, see Cor.~\ref{Cor:commNCR}, Propositions \ref{Prop:generatorNCCR} and \ref{Prop:NCRglobaldim1}.  

The following proposition uses the result of \cite{BrownHajarnavis84} that 
the centre of a homologically homogenous ring is a Krull domain.

\begin{Prop} \label{Prop:centreNCCR}
Let $R$ be a commutative noetherian reduced ring and let $M$ be a finitely
generated torsion-free $R$ module and $\supp M = \supp R$.  Suppose that $A=\End_R(M)$ is homologically
homogeneous. Then $Z(A)$ is the normalization of $R$.  In particular, the centre of a NCCR of an equicodimensional ring $R$ is its normalization $\widetilde{R}$.
\end{Prop}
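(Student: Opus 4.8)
The plan is to show two containments: that $R \subseteq Z(A)$ factors through $\widetilde{R}$, and that $Z(A) \subseteq \widetilde{R}$, after which equality will follow from minimality of the normalization. I will work with the faithful action of $R$ on $M$ (which makes sense since $\supp M = \supp R$ and $R$ is reduced, so $R$ embeds into a finite product of its fraction fields along its minimal primes). The key structural input, cited before the statement, is that the centre of a homologically homogeneous ring is a \emph{Krull domain}; I would first record that this forces $Z(A)$ to be an integral, integrally closed, noetherian commutative ring sitting between $R$ and its total ring of fractions.

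First I would establish that $Z(A)$ is contained in $\widetilde{R}$. Since $A = \End_R(M)$ with $M$ torsion-free and faithful, every central element $z \in Z(A)$ is in particular an $R$-linear endomorphism of $M$ commuting with all of $\End_R(M)$; a standard double-centralizer / Schur-type argument shows such a $z$ acts as multiplication by a scalar in the total ring of fractions $Q(R)$, identifying $Z(A)$ with a subring of $Q(R)$ containing $R$. Because $A$ is module-finite over $R$, this $z$ is integral over $R$, so $Z(A)$ consists of elements of $Q(R)$ integral over $R$, i.e.\ $Z(A) \subseteq \widetilde{R}$. The reducedness of $R$ is what guarantees that $Q(R)$ is a product of fields and that $\widetilde{R}$ is the integral closure of $R$ inside it, so this is a clean inclusion of commutative rings.

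For the reverse inclusion $\widetilde{R} \subseteq Z(A)$, I would use that $Z(A)$ is a Krull domain (hence integrally closed in its fraction field) containing $R$. Since $\widetilde{R}$ is the smallest integrally closed ring between $R$ and $Q(R)$, and $Z(A)$ is an integrally closed subring of $Q(R)$ containing $R$, every element of $\widetilde{R}$ — being integral over $R$ and thus over $Z(A)$ — already lies in $Z(A)$. Combining the two containments gives $Z(A) = \widetilde{R}$. The final sentence of the statement, about NCCRs over equicodimensional rings, then follows immediately: by Proposition~\ref{Prop:nonsingularOrderisHomHom} a NCCR over such a ring is homologically homogeneous, so the general result applies.

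The main obstacle I anticipate is the precise identification $Z(A) \hookrightarrow Q(R)$ in the nonnormal, possibly non-domain setting. In the classical normal-domain case one simply uses that $M$ is reflexive and $A$ is a subring of $\End_{Q(R)}(M \otimes Q(R))$, but here $R$ need only be reduced, so I must be careful to localize at each minimal prime, check that $M$ is torsion-free enough for a central endomorphism to be a scalar on each irreducible component, and verify these scalars patch to a genuine element of $Q(R)$. Ensuring that "commutes with all of $\End_R(M)$" really pins the scalar down (rather than merely landing in a larger commutative subalgebra) is the delicate point, and is where the hypothesis $\supp M = \supp R$ does its essential work.
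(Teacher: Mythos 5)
Your proposal is correct and follows essentially the same route as the paper's proof: the containment $Z(A)\subseteq\widetilde R$ comes from identifying $A\otimes_R Q(R)$ with a product of matrix algebras over the residue fields at the minimal primes (full support giving positive ranks, torsion-freeness giving the embedding $Z(A)\hookrightarrow Q(R)$) together with integrality of $Z(A)$ over $R$, while the reverse containment uses the Brown--Hajarnavis theorem that the centre of a homologically homogeneous ring is a Krull domain, hence integrally closed in $Q(R)$. The only cosmetic difference is that the paper first decomposes $A$ into prime homologically homogeneous factors before invoking the Krull-domain result, which is precisely the patching over irreducible components that you flag as the delicate point.
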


\begin{proof}
The normalization of $R$ is isomorphic to a finite  product of normal domains: $\widetilde R\cong \prod_{i=1}^k \widetilde {R}_i$. Write $Q(R)=Q(\widetilde R)=\prod_i Q_i$ for the finite product of the fraction fields of the $\widetilde{R_i}$.  We have that $Q \otimes M \simeq \bigoplus Q^{n_i}_i$ where
$n_i$ is the rank of $M$ at $Q_i$.  
It follows that 
$$Q \otimes_R A \simeq \End_Q(M \otimes Q)
\simeq \bigoplus_{i,j} \Hom_{Q}(Q_i^{n_i}, Q_j^{n_j}) \simeq \prod_i \End_{Q_i}(Q_i^{n_i}) \simeq \prod Q_i^{n_i \times n_i}.$$
Since $M$ has full support, each $n_i > 0$ and we have that $Z(Q \otimes A)=Q$.  Since $M$
is torsion-free, we see that $A$ is torsion-free. 
Hence $Z(A) \subseteq Q \otimes Z(A) =Z(A \otimes Q)=Q$, the first
equality holds because we are inverting central elements. 
Since $M$ is finitely generated, we have that $A$ is finitely generated over $R$ and so $Z(A)$ is integral
over $R$.  Furthermore, $Z(A)$ is noetherian since it is a submodule
of the finitely generated (and thus noetherian) $R$-module $A$.  
By the above results we
see that 
\begin{equation*} 
R \subseteq Z(A) \subseteq Q,
\end{equation*} 
where $Z(A)$ is an integral extension of $R$ and thus $Z(A) \subseteq \widetilde R$.
Since $A$ is homologically homogeneous, by Theorem 5.3 of \cite{BrownHajarnavis84} $A$ is a direct sum of prime homologically homogeneous rings and by Theorem 6.1 of loc. cit., $Z(A)$ is a Krull domain. Since $Z(A)$ is noetherian, this means nothing else but that $Z(A)$ is a direct sum of integrally closed integral domains (see e.g. \cite[VII 1.3, Corollaire]{Bourbaki81}).  Thus we have $\widetilde R \subseteq Z(A)$ and the assertion follows.
\end{proof}

\begin{Cor} \label{Cor:commNCR}
Let $R$ be a commutative noetherian  reduced  ring. Then $R$ has a commutative NCR $\End_R M$ if and only if $\widetilde R$ is regular. 
\end{Cor}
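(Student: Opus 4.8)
The plan is to prove the two implications separately, the forward direction being the substantial one.

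For the ``if'' direction, suppose $\widetilde R$ is regular. I would simply take $M=\widetilde R$, viewed as an $R$-module. Since $R$ is reduced, $\widetilde R\subseteq Q(R)$ and the normalization map is finite and birational, so $\supp \widetilde R=\Spec R=\supp R$. A short computation then shows $\End_R(\widetilde R)=\widetilde R$: multiplication gives $\widetilde R\subseteq \End_R(\widetilde R)$, while tensoring with $Q=Q(R)$ gives $\End_R(\widetilde R)\otimes_R Q=\End_Q(Q)=Q$, and as $\widetilde R$ is torsion-free so is $\End_R(\widetilde R)$; hence $\End_R(\widetilde R)$ is a finite, torsion-free, birational extension of $R$ sitting inside $Q$, so it is integral over $R$ and contained in $\widetilde R$. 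Thus $A=\End_R(\widetilde R)=\widetilde R$ is commutative, and since a commutative noetherian regular ring has finite global dimension, $A$ is a commutative NCR of $R$.

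For the ``only if'' direction, assume $A=\End_R M$ is commutative with $\gldim A<\infty$ and $\supp M=\supp R$. First, because $R$ is reduced and $M$ has full support, $\ann_R M$ is nilpotent, hence zero, so $M$ is faithful and $R\hookrightarrow A$. As $M$ is finitely generated over the noetherian ring $R$, $A$ is module-finite over $R$, hence noetherian and integral over $R$; being commutative noetherian of finite global dimension, $A$ is regular, so in particular normal and therefore a finite product of normal (regular) domains $A\cong\prod_j A_j$. Next I would compute the generic structure exactly as in the proof of Proposition~\ref{Prop:centreNCCR}: with $Q=Q(R)=\prod_i Q_i$ and $M\otimes_R Q\cong\bigoplus_i Q_i^{n_i}$ one gets $A\otimes_R Q\cong\prod_i Q_i^{n_i\times n_i}$; commutativity forces every $n_i\le 1$ and full support forces every $n_i\ge1$, so $A\otimes_R Q\cong Q$ and $A$ is birational to $R$.

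It remains to identify $A$ with $\widetilde R$. Let $K=\ker(A\to A\otimes_R Q=Q)$ be the torsion submodule of $A$ with respect to the nonzerodivisors of $R$; concretely $K$ is the product of those factors $A_j$ that die generically, so $A/K$ is a product of the surviving regular-domain factors, and is therefore again regular, with total quotient ring $Q$ and hence integrally closed in $Q$. Moreover $A/K$ embeds into $Q$, contains $R$, and is integral over $R$. Because $\widetilde R$ is integral over $R\subseteq A/K$ and $A/K$ is integrally closed in $Q\supseteq\widetilde R$, we get $\widetilde R\subseteq A/K$; together with $A/K\subseteq\widetilde R$ this yields $A/K=\widetilde R$, which is therefore regular.

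The main obstacle I anticipate is precisely this last step. Since $M$ is only assumed finitely generated (not reflexive or even torsion-free), $A$ may carry $R$-torsion and may acquire ``extra'' direct factors supported on proper closed subsets of $\Spec R$, so one cannot expect $A=\widetilde R$ on the nose; the work lies in showing that passing to the torsion-free quotient $A/K$ both preserves regularity and recovers \emph{exactly} the integral closure $\widetilde R$, rather than some smaller birational normal extension. Verifying that $A/K$ is genuinely normal (equivalently, that the surviving factors are the regular domains whose fraction fields are the $Q_i$) is where the finite global dimension hypothesis is really used, through the structure theorem for commutative regular rings as finite products of normal domains.
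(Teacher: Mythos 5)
Your proof is correct and follows essentially the same route as the paper's: Serre's theorem gives regularity of $A=Z(A)$, and the argument of Proposition~\ref{Prop:centreNCCR} is then rerun to trap the relevant ring between $R$ and its integral closure in $Q=Q(R)$, forcing it to equal $\widetilde R$. The only divergence is that the paper asserts outright that $A$ is torsion-free (``because it is regular'') and concludes $A=\widetilde R$, whereas you sidestep that claim by passing to the torsion-free quotient $A/K$ and showing $A/K=\widetilde R$ --- a slightly more careful rendering of the same argument that still delivers the regularity of $\widetilde R$.
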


\begin{proof}
{If $A=\End_R M$ is commutative, then $Z(A)=A$ is a commutative ring of finite global dimension. By Serre's theorem $A$ is a regular ring.  
 If $A$ is a NCR then the proof of Prop. \ref{Prop:centreNCCR} can be followed line by line, the only differences (if $A$ is not a NCCR) are that $A$ is torsion free because it is regular and $Z(A)=A$ is integrally closed for the same reason. Thus we can also conclude that $A=\widetilde R$. The assertion follows from regularity of $A$. The other implication is clear.}
\end{proof}

\begin{Lem} \label{Lem:centregenerator}
Let $R$ be a commutative noetherian ring and let $M \in \mmod(R)$ and set $A=\End_R(R \oplus M)$. Then the centre of $A$ is $R$. 
\end{Lem}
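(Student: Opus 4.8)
The plan is to realize $A=\End_R(R\oplus M)$ through its Peirce decomposition and to exploit that $R$ occurs as a direct summand of $R\oplus M$. First I would record the easy containment $R\subseteq Z(A)$: sending $r\in R$ to the scalar endomorphism $\mu_r\colon n\mapsto nr$ gives a ring homomorphism $R\to A$ whose image lies in the centre, because for commutative $R$ every $\phi\in\End_R(R\oplus M)$ is $R$-linear and hence commutes with $\mu_r$. This map is injective since $R\oplus M$ is a faithful $R$-module (its annihilator is contained in the annihilator of the summand $R$, which is zero). So I may regard $R$ as a central subring of $A$.

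For the reverse containment, let $e\in A$ be the idempotent projecting $R\oplus M$ onto its summand $R$, with $1-e$ the complementary idempotent onto $M$. Given $z\in Z(A)$, commutation with $e$ forces $ez=ze=eze$ and kills the off-diagonal Peirce components, so $z$ preserves the decomposition $R\oplus M$ and restricts to endomorphisms $z|_R\in\End_R(R)$ and $z|_M\in\End_R(M)$. Identifying $\End_R(R)=R$, write $z|_R=\mu_r$ for a unique $r\in R$. It remains to prove that $z|_M$ is again multiplication by the \emph{same} $r$; this is where the generator hypothesis does the work.

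The key step uses the identification $\Hom_R(R,M)\cong M$. For each $m\in M$, let $\theta_m\in A$ be the endomorphism of $R\oplus M$ sending $(a,m')\mapsto(0,ma)$, i.e. the composite of the projection onto $R$, the homomorphism $R\to M$ determined by $1\mapsto m$, and the inclusion of $M$. Writing out the commutation $z\theta_m=\theta_m z$ and comparing both sides on the summand $R$: the left-hand side sends $(a,0)$ to $(0,z|_M(m)\,a)$ (using $R$-linearity of $z|_M$), while the right-hand side sends $(a,0)$ to $(0,mra)$; evaluating at $a=1$ yields $z|_M(m)=mr=\mu_r(m)$. As $m\in M$ was arbitrary, $z|_M=\mu_r$ on $M$, so $z=\mu_r$ lies in $R$. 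Combined with the first paragraph this gives $Z(A)=R$.

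I expect the only real subtlety to be bookkeeping with the right-module convention and the order of composition in $A$, together with the observation that no projectivity or reflexivity of $M$ is needed: the entire argument rests on $R$ being a direct summand of $R\oplus M$ (equivalently, that $R\oplus M$ is a generator), which both makes $R\oplus M$ faithful and, via $\Hom_R(R,M)\cong M$, supplies enough off-diagonal test endomorphisms $\theta_m$ to pin the $M$-component of a central element to the same scalar as its $R$-component.
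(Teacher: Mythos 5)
Your proof is correct and is essentially the paper's own argument: the paper writes $A$ in $2\times 2$ matrix form, uses the idempotent $\begin{pmatrix}1&0\\0&0\end{pmatrix}$ to kill the off-diagonal components of a central element, and then tests against $\begin{pmatrix}1&0\\m&0\end{pmatrix}$ (your $\theta_m$, shifted by the idempotent) to force $\varphi(m)=rm$. Your version merely phrases this as a Peirce decomposition and makes explicit the easy inclusion $R\hookrightarrow Z(A)$ and its injectivity via faithfulness of $R\oplus M$, which the paper leaves implicit.
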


\begin{proof} 
We may write $A$ in matrix form as 
$$ \begin{pmatrix} R & M^* \\ \Hom_R(R,M) & \End_R(M) \end{pmatrix}, $$
where $M^*=\Hom_R(M,R)$
and $\Hom_R(R,M)$ is clearly isomorphic to $M$, via $m:=f_m(1_R)$ for any $m \in M$. 
Suppose that $z= \begin{pmatrix} r & f \\ m & \varphi \end{pmatrix}$ is an element of $Z(A)$. Then for any $x \in A$ we must have $x \cdot z = z \cdot x$. First take $x=\begin{pmatrix} 1_R & 0 \\ 0 & 0 \end{pmatrix}$. From the resulting equation it follows that $f=m=0$. Then take $x=\begin{pmatrix} 1_R & 0 \\ m & 0 \end{pmatrix}$, for some $m \in \Hom_R(R,M)$. From $z \cdot x$ and $x \cdot z$ we obtain the equation 
$$ m (r)= \varphi (m),$$
as an element in $\Hom_R(R,M)$. This means that for all $s \in R$ one
has $(m(r))(s) = \varphi (m)(s)$, that is, identifying $m$ with $m(1)$
and using that $R$ is commutative: $sr m = s \varphi (m)$.   If we set
$s=1_R$, then $\varphi(m)= r m$. Since this equation holds for any $m \in M$, the element $z$ is of the form $\begin{pmatrix} r & 0 \\ 0 & r \cdot 1_{\End_RM} \end{pmatrix}$ and thus $Z(A)= R \cdot 1_A \cong R$. 
\end{proof}

\begin{Prop} \label{Prop:generatorNCCR}
Let $R$ be a commutative noetherian  ring and suppose that $M$ is a generator giving a NCCR. Then $R$ is normal.
\end{Prop}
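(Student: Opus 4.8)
The plan is to compute the centre $Z(A)$ of $A=\End_R(M)$ in two genuinely different ways and to compare the two answers: the generator hypothesis will force $Z(A)=R$, while the NCCR hypothesis will force $Z(A)=\widetilde R$, and equating the two gives $R=\widetilde R$, i.e. $R$ is normal. Before starting, note that as observed in the remarks of Section~\ref{Sub:definitionNCCR}, since $M$ gives a NCCR the ring $R$ is automatically reduced, and by Definition~\ref{Def:nccr} the module $M$ is finitely generated, torsion-free and satisfies $\supp M=\supp R$; these are precisely the standing hypotheses of both Lemma~\ref{Lem:centregenerator} and Proposition~\ref{Prop:centreNCCR}.

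First I would use that $M$ is a generator. Since $R\in\add M$ we have $\add M=\add(R\oplus M)$, so $A=\End_R(M)$ and $B=\End_R(R\oplus M)$ are the endomorphism rings of two additive generators of one and the same additive category; hence $A$ and $B$ are Morita equivalent. Because the centre is a Morita invariant (and the equivalence here is $R$-linear), this yields a canonical identification $Z(A)\cong Z(B)$ of $R$-algebras. Lemma~\ref{Lem:centregenerator} computes $Z(B)=R$, and therefore $Z(A)=R$.

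Next I would use that $A$ is a non-singular order. To invoke Proposition~\ref{Prop:centreNCCR} I must know that $A$ is homologically homogeneous. The point is that the defining conditions of a non-singular order are local, and after localizing at a maximal ideal $\mf{m}$ the ring $R_\mf{m}$ is local, hence equicodimensional; there Proposition~\ref{Prop:nonsingularOrderisHomHom} tells me that the non-singular order $A_\mf{m}$ is homologically homogeneous. Since homological homogeneity is detected locally (simple $A$-modules with $R$-annihilator $\mf{m}$ are exactly the simple $A_\mf{m}$-modules, and their projective dimensions agree), $A$ itself is homologically homogeneous. Now Proposition~\ref{Prop:centreNCCR} applies directly to $A=\End_R(M)$ and gives $Z(A)=\widetilde R$. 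Comparing with the previous paragraph, $R=Z(A)=\widetilde R$, and since $R$ is reduced this says exactly that $R$ is normal.

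The hard part will not be any single estimate but the bookkeeping that makes the two established lemmas applicable to the \emph{same} ring $A$. Specifically, the crux is the Morita-invariance step: Lemma~\ref{Lem:centregenerator} is stated only for the split module $R\oplus M$, so I must pass the centre computation across the Morita equivalence in order to apply it to an arbitrary generator $M$, being careful that the identification of centres respects the $R$-algebra structure. The secondary technical point is upgrading ``non-singular order'' to ``homologically homogeneous'' without an equicodimensionality hypothesis, which I handle by the localization argument above so that Proposition~\ref{Prop:nonsingularOrderisHomHom} becomes available at each maximal ideal. Once both of these are in place, the equality $R=\widetilde R$ is immediate.
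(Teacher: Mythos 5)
Your proof is correct and follows essentially the same route as the paper's: compute $Z(\End_R M)$ once as $R$ via Lemma~\ref{Lem:centregenerator} and once as $\widetilde R$ via Proposition~\ref{Prop:centreNCCR}, then compare. The paper applies both results directly without spelling out the Morita-equivalence step (Lemma~\ref{Lem:centregenerator} is stated for $R\oplus M$) or the localization needed to pass from non-singular order to homologically homogeneous without an equicodimensionality hypothesis, so your version is, if anything, more careful on those two points.
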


\begin{proof}
Let $A=\End_R M$ the NCCR of $R$. Since $A$ is by definition a non-singular order, by Prop.~\ref{Prop:centreNCCR}, $Z(A)=\widetilde R_{\mathrm{red}}$, the normalization of $R_{\mathrm{red}}$. But by Lemma \ref{Lem:centregenerator} the centre of $A$ is $R$. Thus the claim follows.
\end{proof}

In the remainder of this section it will be shown that our definition \ref{Def:nccr} yields a NCCR (in the sense of Iyama--Wemyss) for normal rings. We begin by recalling Serre's condition $(\Se_2)$, as over nonnormal rings reflexivity is too restrictive. 

 For a non-negative integer $n$, $M$ is said to satisfy $(\Se_n)$ if:
 $$ \depth_{R_\mf{p}}M_\mf{p} \geq \min\{n,\dim(R_\mf{p})\} \ \mbox{ for all } \mf{p} \in \Spec(R)$$

 \begin{Bem}

 There are several definitions of Serre's condition for modules in the literature. For a detailed discussion see~\cite{overflowsn}. When $R$ is normal (or quasi-normal, for a definition see Section \ref{Sub:syzygies}), being $(\Se_2)$ is equivalent to being reflexive.
 \end{Bem}
The following is standard but is often stated for normal rings, so we give a proof. 

\begin{Lem}\label{lemCodimOne}
Let $R$ be a commutative ring and $M,N$ be finitely generated $R$ modules such that $M$ is $(\Se_2)$ and $N$ is $(\Se_1)$. If $f: M\rightarrow N$ is an $R$-linear map which is locally an isomorphism in codimension one, then $f$ is an isomorphism. 
\end{Lem}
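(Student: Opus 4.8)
The plan is to control the kernel and cokernel of $f$ simultaneously, showing each has no associated primes. Write $K=\ker f$ and $C=\operatorname{coker} f$, so that there are exact sequences $0\to K\to M\xrightarrow{f} N$ and $M\xrightarrow{f} N\to C\to 0$. The hypothesis that $f$ is an isomorphism in codimension one says precisely that $K_\mf{p}=0$ and $C_\mf{p}=0$ whenever $\mathrm{ht}(\mf{p})\leq 1$; equivalently, $\supp K$ and $\supp C$ consist only of primes of height at least $2$. Since $R$ is noetherian, it suffices to prove that $\operatorname{Ass} K=\operatorname{Ass} C=\emptyset$, for then $K=C=0$ and $f$ is an isomorphism.

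First I would establish injectivity. Because $(\Se_2)$ implies $(\Se_1)$, the module $M$ satisfies $(\Se_1)$; localizing the defining inequality at an associated prime $\mf{p}$ of $M$, where $\depth_{R_\mf{p}} M_\mf{p}=0$, forces $\min\{1,\dim R_\mf{p}\}=0$ and hence $\mathrm{ht}(\mf{p})=0$. Thus every associated prime of $M$ is a minimal prime. As $K\subseteq M$ gives $\operatorname{Ass} K\subseteq\operatorname{Ass} M$, any associated prime of $K$ would have height $0$; but it also lies in $\supp K$ and so has height at least $2$, a contradiction. Therefore $K=0$ and $f$ is injective.

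It remains to prove surjectivity, i.e.\ $C=0$, which is the main step. Now $f$ fits into a short exact sequence $0\to M\xrightarrow{f} N\to C\to 0$. Suppose for contradiction that $\mf{p}\in\operatorname{Ass} C$; then $\mf{p}\in\supp C$, so $\mathrm{ht}(\mf{p})\geq 2$, and localizing gives $\depth_{R_\mf{p}} C_\mf{p}=0$. I would then apply the depth lemma to the localized sequence $0\to M_\mf{p}\to N_\mf{p}\to C_\mf{p}\to 0$, which yields $\depth_{R_\mf{p}} C_\mf{p}\geq\min\{\depth_{R_\mf{p}} M_\mf{p}-1,\ \depth_{R_\mf{p}} N_\mf{p}\}$. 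Here the hypotheses pay off exactly: $(\Se_2)$ for $M$ gives $\depth_{R_\mf{p}} M_\mf{p}\geq\min\{2,\mathrm{ht}(\mf{p})\}=2$, and $(\Se_1)$ for $N$ gives $\depth_{R_\mf{p}} N_\mf{p}\geq\min\{1,\mathrm{ht}(\mf{p})\}=1$. Hence $\depth_{R_\mf{p}} C_\mf{p}\geq\min\{1,1\}=1>0$, contradicting $\depth_{R_\mf{p}} C_\mf{p}=0$. So $\operatorname{Ass} C=\emptyset$ and $C=0$.

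The only delicate point is the surjectivity step, where one must feed the correct depth estimates into the depth lemma: the argument uses the full strength of $(\Se_2)$ on the source $M$ (to get depth $\geq 2$, losing one in the connecting map) together with $(\Se_1)$ on the target $N$, and crucially the fact that $\supp C$ avoids codimension one so that every relevant prime has height at least $2$. The injectivity step is comparatively routine once one observes that $(\Se_1)$ bans embedded and higher-codimension associated primes.
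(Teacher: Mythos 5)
Your proof is correct, and it follows the same basic strategy as the paper's --- reduce to showing the kernel $K$ and cokernel $C$ vanish, then kill each by localizing at a well-chosen prime and counting depth --- but the two arguments package the key steps differently. The paper localizes at a prime \emph{minimal} in the locus where $f$ fails to be an isomorphism, so that $K$ and $C$ have finite length over a local ring of dimension at least $2$; it then kills $K$ because a module of depth $\geq 2$ has no depth-zero submodule, and kills $C$ by observing that $\Ext^1_R(C,M)=0$ (again from $\depth M\geq 2$ and $C$ of finite length), so that $0\to M\to N\to C\to 0$ splits and $C$ would be a depth-zero direct summand of the $(\Se_1)$ module $N$. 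You instead localize at an associated prime of $K$ (resp.\ of $C$), which necessarily has height at least $2$, and for the cokernel you feed $\depth M_{\mf{p}}\geq 2$ and $\depth N_{\mf{p}}\geq 1$ into the depth lemma for $0\to M_{\mf{p}}\to N_{\mf{p}}\to C_{\mf{p}}\to 0$ to get $\depth C_{\mf{p}}\geq 1$, contradicting $\mf{p}\in\operatorname{Ass}C$. Your route avoids the splitting trick and never needs $K$ or $C$ to have finite length --- only that their supports avoid codimension $\leq 1$ --- which makes the surjectivity step marginally more direct; the paper's version has the small advantage of exhibiting the obstruction very concretely as a finite-length summand of $N$. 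The only point worth flagging in yours is that the implication $\operatorname{Ass}K=\operatorname{Ass}C=\emptyset\Rightarrow K=C=0$ uses that $R$ is noetherian, which holds by the paper's standing conventions even though the lemma's statement says only ``commutative ring''.
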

 
 \begin{proof}
 Suppose
that the set $\mathcal{S}
= \{\mf{p} \in \Spec
R \mid {f}_\mf{p} \text{
is not an isomorphism}\}$ is
nonempty. Then let $\mf{p}_0$
be minimal in $\mathcal{S}$.
We localize at $\mf{p}_0$ and
replace $R_{\mf{p}_0}$ by $R$.
Then $\dim R
\geqslant 2$. We let $K,C$ be the kernel and cokernel of $f$, by assumption
 $K,C$ have finite
length, and
consider the exact sequence
\[
  0 \to K \to M \to
  N \to C \to 0
\]
First, note that since $M$ is $(\Se_2)$,
we have $\depth M \geqslant 2$,  so $K=0$ since $M$  can't have a depth zero submodule. 
Also, $\Ext_R^1 (C,M) = 0$ and the sequence $0\to M\to N\to C\to 0$ splits. But then $C$ is a summand of $N$, so it must be $0$ since
$\depth N\geq 1$. Therefore
$f$ is an isomorphism.
\end{proof}

The next proposition shows that a NCCR of a nonnormal ring is naturally a NCCR of its normalization.  
We define a functor $F$ from $R$-modules to $\widetilde{R}$ modules by
  $$F(M)=(M \otimes_R \widetilde{R})^{**}$$
where $N^*=\Hom_{\widetilde{R}}(N,\widetilde{R})$ for any $\widetilde R$-module $N$.

\begin{Prop} \label{nccrovernormal}
Let $R$ be a commutative noetherian reduced ring with normalization
$\widetilde{R}$ and let $M$ be a finitely generated, faithful,
torsion-free module that gives a NCCR of $R$.
 Then
$$\End_R(M) \cong \End_{\widetilde R}(F(M)).$$ 
\end{Prop}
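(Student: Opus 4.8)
The plan is to identify the centre of $A:=\End_R(M)$ with the normalization $\widetilde R$, turn everything into a statement about $\widetilde R$-modules, and then compare $A$ with $B:=\End_{\widetilde R}(F(M))$ by a natural map that is an isomorphism in codimension one, closing the argument with Lemma~\ref{lemCodimOne}.

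First I would record the centre. Since $M$ gives a NCCR, $A$ is a non-singular order, hence homologically homogeneous, so by Prop.~\ref{Prop:centreNCCR} its centre is $Z(A)=\widetilde R$. Thus $A$ is module-finite over $\widetilde R$, and since it is a non-singular order it is maximal Cohen--Macaulay over $R$; as $\widetilde R$ is module-finite over $R$ with the same total quotient ring $Q$, depth is unchanged, so $A$ is MCM, in particular $(\Se_2)$, as an $\widetilde R$-module. Because $\widetilde R=Z(A)$ acts on $M$ through $A$, the $R$-module $M$ acquires a compatible $\widetilde R$-structure extending its $R$-structure, and every $R$-endomorphism of $M$ commutes with this central action; hence $\End_R(M)=\End_{\widetilde R}(M)=A$. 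Note also that $M$, being torsion-free over $R$, is torsion-free over $\widetilde R$, as it embeds in the $Q$-module $M\otimes_R Q$.

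Next I would produce the comparison map. Functoriality of $F$ gives a ring homomorphism $\Theta\colon A=\End_R(M)\to \End_{\widetilde R}(F(M))=B$, $\phi\mapsto F(\phi)=(\phi\otimes_R\widetilde R)^{**}$, which is $\widetilde R$-linear since $F$ is. As $F(M)$ is reflexive over the normal ring $\widetilde R$, the module $B=\Hom_{\widetilde R}(F(M),F(M))$ is reflexive, in particular $(\Se_1)$. Writing $Q=Q(R)=Q(\widetilde R)=\prod_i Q_i$ and $M\otimes_R Q=\bigoplus_i Q_i^{n_i}$ as in the proof of Prop.~\ref{Prop:centreNCCR}, we have $F(M)\otimes_{\widetilde R}Q=M\otimes_R Q$, so both $A\otimes_{\widetilde R}Q$ and $B\otimes_{\widetilde R}Q$ equal $\prod_i Q_i^{n_i\times n_i}$, and $\Theta\otimes_{\widetilde R}Q$ is the identity. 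Thus $\Theta$ is generically an isomorphism.

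Then I would check $\Theta$ is an isomorphism in codimension one and conclude. At a prime $\mathfrak p$ of $\widetilde R$ of height at most one, $\widetilde R_{\mathfrak p}$ is a field or a DVR, over which finitely generated torsion-free modules are free and double-dualization is just passage to the torsion-free quotient. Localizing the natural $\widetilde R$-linear surjection $\mu\colon M\otimes_R\widetilde R\to M$, $\tilde r\otimes m\mapsto \tilde r\cdot m$, the map $\mu_{\mathfrak p}$ must kill the torsion of $(M\otimes_R\widetilde R)_{\mathfrak p}$ and identify its free quotient with $M_{\mathfrak p}$; but this same free quotient is exactly $F(M)_{\mathfrak p}=((M\otimes_R\widetilde R)_{\mathfrak p})^{**}$. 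Hence $M_{\mathfrak p}\cong F(M)_{\mathfrak p}$ compatibly with the structure maps, so $\Theta_{\mathfrak p}\colon \End_{\widetilde R_{\mathfrak p}}(M_{\mathfrak p})\to \End_{\widetilde R_{\mathfrak p}}(F(M)_{\mathfrak p})$ is an isomorphism. Since $A$ is $(\Se_2)$ and $B$ is $(\Se_1)$ over $\widetilde R$ and $\Theta$ is an isomorphism in codimension one, Lemma~\ref{lemCodimOne} yields that $\Theta$ is an isomorphism, i.e.\ $\End_R(M)\cong\End_{\widetilde R}(F(M))$. The crux of the argument is precisely this codimension-one comparison: showing that after localizing at a height-one prime of $\widetilde R$ the base change $M\otimes_R\widetilde R$, its reflexive hull $F(M)$, and the central $\widetilde R$-structure on $M$ all collapse to the same free module, so that $\Theta$ becomes the identity there. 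Everything else is formal once $Z(A)$ is identified with $\widetilde R$; I would be careful, however, not to assert $\End_{\widetilde R}(M)\cong\End_{\widetilde R}(M^{**})$ in general, as this can fail for non-reflexive $M$, and it is the reflexivity (hence $(\Se_2)$) of $A$ and $B$ that makes Lemma~\ref{lemCodimOne} applicable.
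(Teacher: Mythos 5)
Your proposal is correct, and it shares the paper's overall skeleton -- identify $Z(A)=\widetilde R$ via Prop.~\ref{Prop:centreNCCR} to make $M$ an $\widetilde R$-module, set up the natural ring map $A\to B=\End_{\widetilde R}(F(M))$, and reduce to a codimension-one check via Lemma~\ref{lemCodimOne} using that $A$ is $\CM$ (hence $(\Se_2)$) and $B$ is reflexive (hence $(\Se_1)$). Where you genuinely diverge is in the codimension-one step itself. The paper localizes at a height-one prime of $R$ and argues through the arithmetic of orders: $A_\mf{p}$ is a hereditary order contained in the hereditary (indeed Azumaya) order $B_\mf{p}$, and by the structure theorem for hereditary orders over a complete DVR (Reiner, Thm.~39.14) a non-maximal hereditary order would force an endomorphism $\psi:\widehat M\to t\widehat M$ with $t^{-1}\psi$ not an endomorphism, which is absurd; hence $A_\mf{p}$ is maximal and equals $B_\mf{p}$. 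You instead show directly that the objects themselves coincide in codimension one: over the one-dimensional regular localization of $\widetilde R$, the double dual is the torsion-free quotient, and the multiplication map $M\otimes_R\widetilde R\to M$ (surjective, generically the identity, killing torsion) identifies $F(M)_\mf{p}$ with $M_\mf{p}$ compatibly with the $A$-action, so $\Theta_\mf{p}$ is literally a conjugation isomorphism $\End_{\widetilde R_\mf{p}}(M_\mf{p})\cong\End_{\widetilde R_\mf{p}}(F(M)_\mf{p})$. Your route is more elementary and self-contained, avoiding completion, Morita equivalence, and the classification of hereditary orders; the paper's route yields the extra fact that $A$ is a maximal order in codimension one, which is stronger than what the statement needs. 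One point worth making explicit if you write this up: Lemma~\ref{lemCodimOne} is applied to an $\widetilde R$-linear map between modules satisfying $(\Se_2)$ and $(\Se_1)$ over $\widetilde R$, so you should either verify these Serre conditions over $\widetilde R$ (as you sketch, via the equality of depths for the finite birational extension $R\subseteq\widetilde R$) or, as the paper does, run the lemma over $R$ and note that your local computation at primes of $\widetilde R$ lying over a height-one prime of $R$ still goes through, since those localizations of $\widetilde R$ are regular of dimension at most one.
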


\begin{proof}
Since $M \mapsto F(M)$ is an additive functor from $\mmod
R$ to
$\mmod \widetilde{R} $ we see that we get a ring homomorphism as above.
Since $A=\End_RM$ is torsion free we see that the map is injective.
Since $A$ is a $\CM$ $R$-module and $F(M)$ is a reflexive
$\widetilde{R}$-module, we may apply
Lemma \ref{lemCodimOne} and we only need to show that we have an
isomorphism in codimension one.  Hence we may assume that we have
localized at a prime $\mf{p}$ of height one in $R$, and we suppress the
localization in the following argument to unencumber the notation.

Let $Q$ be the total quotient ring of $R$.  Recall that a ring is hereditary if it has global dimension one.
We now have that $A$ is a hereditary $R$-order in $A\otimes Q$, which is the total quotient ring of $A$.  We also know that $B=\End_{\widetilde{R}} F(M)$ is a
hereditary order in $A\otimes Q$ that contains $A$.  We also know that $\widetilde{R}$ is a semi-local Dedekind domain and $B$ will be an Azumaya $\tilde{R}$ algebra since $F(M)$ is locally free.  
Now $R$ acts centrally on $M$ so we 
obtain 
a map  $R \rightarrow Z(A)$.  Since the centre of $A$ is normal, we see by the 
universal property of normalization that this map factors through $\widetilde{R}$.  The map $\widetilde{R} \rightarrow \End_RM$ endows $M$ with an $\widetilde{R}$-module structure.
So $A$ must be a hereditary $\widetilde{R}$-order in $B$.  If $A$ is
not maximal, then by the structure theorem for hereditary orders
Theorem 39.14 \cite{Reiner75}, if we form the completion $\widehat{R}$ of $R$ at a height one prime, then $\widehat{A} = A \otimes_R \widehat{R}$ is Morita equivalent to a subalgebra of a matrix algebra
over $\widehat{R}$ with $t\widehat{R}$ above the diagonal and $\widehat{R}$ on and below the
diagonal, where $t$ generates the maximal ideal of $\widehat{R}$.  If we write 
$\widehat{M} = M \otimes \widehat{R}$ then
this implies that there is a endomorphism $\psi:\widehat{M} \rightarrow t\widehat{M}$ where
there is no endomorphism $t^{-1} \psi : \widehat{M} \rightarrow \widehat{M}$.
This contradiction shows that $\widehat{A}$ is a maximal order and so
we see that $A$ is a maximal order by Thm. 26.21 of~\cite{CR81}.
\end{proof}

\begin{Bem} 
The following observation was made by Ragnar Buchweitz: Note that $M$
is a module over the normalization when the centre of $\End_RM$ is
$\widetilde{R},$ as for equicodimensional $R$ by Prop.~\ref{Prop:centreNCCR}. Then if $M$ is $(\Se_2)$, one has $M=F(M)$ and thus $\End_R(M) = \End_{\widetilde R}(F(M))$.
\end{Bem}

Note that $R$ is automatically equicodimensional when it is a finitely generated domain over a field $k$.
The previous result, combined with Theorem 1.1 of \cite{StaffVdB08}, immediately gives:

\begin{Cor}
Let $k$ be an algebraically closed field of characteristic $0$ and $R$ be a finitely generated $k$ algebra which is equicodimensional. If $R$ has a NCCR, then $\widetilde R$ has only rational singularities. 
\end{Cor}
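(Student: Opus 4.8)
The plan is to reduce the corollary to a direct application of the centre theorem of Stafford--Van den Bergh \cite{StaffVdB08}, using the structural results already established in this section to verify its hypotheses. The only real content lies in assembling the right objects; the substantive input—that rational singularities appear—is supplied entirely by the cited theorem.

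First I would dispose of the reducedness issue. The definition of NCCR requires $M$ only to be torsion-free with $\supp M=\supp R$, but as observed in the Remark following Definition~\ref{Def:nccr}, a module giving a NCCR forces $R$ to be reduced; so we may assume $R$ is reduced without loss of generality. Let $A=\End_R M$ be the given NCCR, which is by definition a non-singular $R$-order. Since $M$ is finitely generated, $A$ is module-finite over $R$, and because $R$ is a finitely generated $k$-algebra, $A$ is itself a finitely generated $k$-algebra.

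Next I would promote $A$ from a non-singular order to a homologically homogeneous algebra. This is precisely where the hypothesis that $R$ is equicodimensional enters: by Proposition~\ref{Prop:nonsingularOrderisHomHom}, over an equicodimensional ring the two notions coincide, so $A$ is homologically homogeneous. Simultaneously, Proposition~\ref{Prop:centreNCCR} identifies the centre as $Z(A)\cong\widetilde R$, the normalization of $R$ (its hypotheses—$R$ reduced, $M$ finitely generated torsion-free of full support, $A$ homologically homogeneous—are all now in force).

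Finally I would invoke Theorem~1.1 of \cite{StaffVdB08}: the centre of a homologically homogeneous algebra finitely generated over a field of characteristic $0$ has at worst rational singularities. Applying this to $A$ and using $Z(A)\cong\widetilde R$ yields that $\widetilde R$ has only rational singularities. I do not expect a genuine obstacle; the one point requiring care is the logical order, namely that equicodimensionality is needed to apply Proposition~\ref{Prop:nonsingularOrderisHomHom} and thereby meet the homological-homogeneity hypothesis of \cite{StaffVdB08}, while the characteristic-zero and finite-generation hypotheses on $k$ and $R$ are used only through that cited theorem and cannot be dispensed with.
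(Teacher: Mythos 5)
Your proof is correct and follows essentially the same route as the paper: the paper's one-line proof combines the preceding structural results (homological homogeneity via Proposition~\ref{Prop:nonsingularOrderisHomHom} using equicodimensionality, and the identification $Z(A)\cong\widetilde R$) with Theorem~1.1 of \cite{StaffVdB08}, exactly as you do. The only cosmetic difference is that the paper cites Proposition~\ref{nccrovernormal} (the NCCR descending to $\widetilde R$) as the bridge, whereas you invoke Proposition~\ref{Prop:centreNCCR} directly, which is the same underlying mechanism.
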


\begin{Bem}
The results of this section shows that NCCRs factor through normalization, in the same way as for the commutative desingularizations:  if $f: Y\to X$ is a resolution of singularities of a variety $X$, then $f$ factors through the normalization $\widetilde X\to X$ (see \cite[II.3]{Hs}).
\end{Bem}

\section{NC(C)Rs for commutative rings and rational singularities} \label{Sec:ncrandrational}
\subsection{Some examples of NC(C)Rs and criteria for existence} \label{sub:nccrexamplesandexistence} 
There seems to be an intimate relationship between the
  existence of a NC(C)R for a ring $R$ and the type of singularities
  of $R$. In particular, rational singularities (at least in
  characteristic $0$) occur, see Thm.~\ref{Thm:NCRdim2}: a nonnormal
  two-dimensional ring has a NCR if and only if its normalization only
  has rational singularities. Here we also discuss some
  low-dimensional examples of NCRs, such as Leuschke's NCRs over
  simple curve singularities and some free divisors. We are able to
  give a partial answer to Question \ref{Qu:Buchweitz}, namely, we
  find a free divisor which does not allow a NCR. Moreover, for
  certain Gorenstein graded rings, we derive a criterion which tells whether their normalizations have  rational singularities, using the $a$-invariant. \\
We denote by $\CM(R)$  the full subcategory of the category of finitely 
generated $R$-modules whose objects are the maximal Cohen-Macaulay modules.  

\begin{Lem} \label{Lem:generatornccr}
Let $R$ be local ring of dimension at most $2$ and a homomorphic image of a Gorenstein ring. If
there is a finitely generated module $M$ which is a generator and gives a NCCR $A=\End_R(M)$, then $R$ is normal and has finite $\CM$ type.
\end{Lem}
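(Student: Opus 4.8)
The plan is to read off normality from the earlier results and then, in the only essential dimension, recognize $M$ as a representation generator. First, since $M$ is a generator giving a NCCR, Proposition~\ref{Prop:generatorNCCR} gives that $R$ is normal, hence (being local and connected) a normal domain. The cases $\dim R\le 1$ are immediate: a normal local domain of dimension $0$ or $1$ is a field or a DVR, hence regular, and over a regular local ring every MCM module is free, so $R$ trivially has finite $\CM$ type. So I would assume $\dim R=2$. Writing $M=R\oplus M_0$ (possible since $M$ is a generator), the corner $R=eAe$ associated to the projection idempotent $e\in A=\End_R(M)$ sits as an $R$-module direct summand of $A$; since a NCCR is a non-singular order, $A$ is MCM over $R$, and therefore its summand $R$ is MCM over itself, i.e.\ $R$ is Cohen--Macaulay. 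Thus $R$ is a two-dimensional CM normal local domain which, being a homomorphic image of a Gorenstein ring, admits a canonical module, and in this setting reflexive $=(\Se_2)=$ MCM.

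The heart of the argument is to show $\add M=\CM(R)$. Let $N$ be any MCM $R$-module and view $\Hom_R(M,N)$ as a (right) $A$-module. Since $R$ is normal and $N$ is reflexive, $\Hom_R(M,N)$ is reflexive, hence MCM over $R$ because $\dim R=2$. Now $A$ is a non-singular order, so $\gl A=\dim R=2<\infty$ forces $\pd_A(\Hom_R(M,N))<\infty$, and the Auslander--Buchsbaum equality for the module-finite $R$-algebra $A$ gives
$$\pd_A \Hom_R(M,N)=\depth_R A-\depth_R \Hom_R(M,N)=2-2=0,$$
since $A$ is MCM over $R$. Hence $\Hom_R(M,N)$ is projective over $A$.

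Finally I would transport this back to $R$. The functor $\Hom_R(M,-)$ restricts to an equivalence $\add M\xrightarrow{\ \sim\ }\proj A$, so $\Hom_R(M,N)\cong\Hom_R(M,N')$ for some $N'\in\add M$. Because $M$ is a generator, the idempotent $e$ satisfies $eAe=\End_R(eM)=\End_R(R)=R$ and right multiplication by $e$ identifies $\Hom_R(M,-)\cdot e\cong\Hom_R(eM,-)=\Hom_R(R,-)=\mathrm{id}$ on $R$-modules; applying $(-)\cdot e$ to the isomorphism $\Hom_R(M,N)\cong\Hom_R(M,N')$ yields $N\cong N'$, so $N\in\add M$. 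Therefore $\CM(R)=\add M$, which has only finitely many indecomposable objects, and $R$ has finite $\CM$ type. The step I expect to be the main obstacle is precisely the implication ``$\Hom_R(M,N)$ projective over $A\Rightarrow N\in\add M$'': it is exactly the generator hypothesis, entering through the corner $eAe=R$, that lets $(-)\cdot e$ reflect the isomorphism; and I would take care that both the Auslander--Buchsbaum input and the identification reflexive $=$ MCM genuinely rely on $\dim R=2$ together with the canonical module coming from the Gorenstein-image assumption.
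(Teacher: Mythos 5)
Your proof is correct and follows essentially the same route as the paper: normality via Proposition~\ref{Prop:generatorNCCR}, then $\CM(R)\subseteq\add M$ by showing $\Hom_R(M,N)$ is MCM, hence projective over the nonsingular order $A$, and transporting back through the equivalence $\add M\simeq\proj A$. The only difference is that you inline the content of the two cited Iyama--Wemyss lemmas (the Auslander--Buchsbaum depth count for Lemma~2.16 and the idempotent-corner argument for Lemma~2.5) and treat dimensions $\le 1$ separately, where the paper handles all of $\dim R\le 2$ uniformly and invokes Theorem~2.2 of Leuschke--Wiegand to conclude finite $\CM$ type.
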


\begin{proof}
First note that by Prop.~\ref{Prop:generatorNCCR}, $R$ is normal, and has a canonical module as it is a homomorphic image of some Gorenstein ring. Take now any $N \in \CM(R)$. Then $\Hom_R(M,N) \in \CM(R)$ since $\dim R \leq 2$. 
By Lemma 2.16
of \cite{IyamaWemyss10} $\Hom_R(M,N)$ is projective over $A$. Since
 $M$ is a generator, the functor $\Hom_R(M,-): \add M \rightarrow \proj \End_R(M)$ is an equivalence (\cite[Lemma 2.5]{IyamaWemyss10}). So $N$ is contained in $\add M$. So $\add M$ contains $\CM(R)$.  Now by Theorem 2.2 of ~\cite{LeuschkeWiegand}, we can conclude that $R$ has finite $\CM$ type.
\end{proof}

The next statement follows from \cite[7.8.9]{McConnellRobson}, but we give here a self-contained proof. 

\begin{Prop} \label{Prop:NCRglobaldim1}
Let $R$ be reduced of dimension 1. Then $R$ has a generator giving a NCR of global dimension $1$ if and only if $R$ is regular.
\end{Prop}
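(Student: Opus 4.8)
The plan is to prove both implications. For the easy direction, suppose $R$ is regular of dimension $1$, hence a finite product of Dedekind domains (in fact a regular ring). Then $M=R$ itself is a generator, and $A=\End_R(R)\cong R$ has global dimension equal to $\gldim R = 1$, so $R$ trivially admits a generator giving a NCR of global dimension $1$.

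For the harder, forward direction, suppose $M$ is a generator with $A=\End_R(M)$ of global dimension $1$. The strategy is to show $A$ is a \emph{non-singular order}, so that $M$ gives a generator NCCR, and then invoke the rigidity results already established. First I would observe that since $\dim R = 1$, having $\gldim A = 1$ should force $A$ to be a non-singular $R$-order: one checks that $A=\End_R(M)$ is MCM (i.e. $(\Se_2)$, hence maximal depth $1$ over the one-dimensional $R$) and that $\gl A_{\mf p} = \dim R_{\mf p}$ at every prime, using that global dimension $1$ means $A$ is hereditary and that at the minimal primes $A$ is semisimple. This identifies $M$ as a generator giving a NCCR in the sense of Definition~\ref{Def:nccr}. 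Now Proposition~\ref{Prop:generatorNCCR} applies directly: a generator giving a NCCR forces $R$ to be normal. A one-dimensional reduced normal ring is a product of Dedekind domains, hence regular, which is exactly the desired conclusion.

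The main obstacle I anticipate is verifying cleanly that $\gldim A = 1$ really does make $A$ a non-singular order in dimension $1$ — in particular checking the Cohen--Macaulay (MCM) condition on $A$ over $R$ and that the local global dimensions match $\dim R_{\mf p}$ at every prime, including the generic/minimal primes where $\dim R_{\mf p}=0$ and one needs $A_{\mf p}$ semisimple. The relevant input is that $R$ is reduced of dimension $1$, so $A$ is torsion-free and MCM of depth $1$, and a hereditary order over a one-dimensional reduced base localizes correctly. Once the non-singular order property is secured, the rest is a direct appeal to Proposition~\ref{Prop:generatorNCCR} together with the elementary classification of one-dimensional reduced normal rings as regular.

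One subtlety to handle carefully is that a \emph{generator} NCR of global dimension $1$ is by our conventions the same as a generator giving a NCCR precisely because global dimension $1$ and the order condition coincide in dimension $1$; I would make this identification explicit rather than implicit, so that the citation of Proposition~\ref{Prop:generatorNCCR} is justified. This aligns with the sentence in the introduction asserting that generator NCRs of global dimension $1$ do not exist for singular curves, which is exactly the contrapositive of the forward direction here.
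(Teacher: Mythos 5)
Your proposal is correct and follows essentially the same route as the paper: the easy direction via $\End_R R=R$, and the forward direction by observing that $A$ is semisimple at the minimal primes (since $R$ is reduced, $M$ is generically free) and hereditary at the maximal ideals, so that $A$ is a non-singular order and hence a generator NCCR, at which point Proposition~\ref{Prop:generatorNCCR} forces $R$ to be normal, i.e.\ regular in dimension one. Your explicit attention to the $\CM$ condition on $A$ is a reasonable extra care the paper's own proof leaves implicit, but it is not a different argument.
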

\begin{proof}
If $R$ is normal, then $R$ is regular and thus is its own NCR, via $\End_R R=R$. For the other implication, let $R \oplus M$ give a generator NCR. As $R$ is reduced, $M$ is generically free. So $A$ has global dimension zero at the minimal primes and the global dimension of $A$ is one at the maximal ideal.  Thus by definition $A:=\End_R(R \oplus M)$ is a NCCR.  By Prop.~\ref{Prop:generatorNCCR}  the ring $R$ has to be normal.
\end{proof}


\subsubsection{Leuschke's NCRs over curves} \label{Sub:Leuschke} For a one-dimensional reduced Henselian local ring  $(R, \mf{m})$ there is a construction  of a module $M$ such that $A=\End_R(M)$ has finite global dimension in
 Section 1 of \cite{Leuschke07}.  The module $M$ is built via the Grauert--Remmert normalization algorithm: in order to obtain the normalization $\widetilde R$ of $R$ one builds a chain of rings 
\begin{equation} \label{Equ:integralchain}
R=R_0 \subsetneq R_{1}^{(j_1)} \subsetneq \cdots \subsetneq R_{n}^{(j_n)}=\widetilde R,
\end{equation} 
where each $R_{i}^{(j_i)}$ is a direct factor of the endomorphism ring
of the maximal ideal of $R_{i-1}^{(j_{i -1})}$.  Note here that the endomorphism ring can only split into factors if $R$ is reducible.  The module  $M=\bigoplus_{i}
R_{i}^{(j_i)}$ is a finitely generated $R$-module and $A=\End_R(M)$
has global dimension at most $n+1$, where $n$ is the length of the longest possible chain of algebras between $R_0$ and its normalization as in 
(\ref{Equ:integralchain}). Note that this length is bounded by the
$\delta$-invariant of $R$, which is the length of ${\widetilde
  R}/R$. We consider some examples of $R$ and $M$, namely the $ADE$-curve singularities.

  \begin{Bem} \label{Rmk:ADE}
 Note here that we have explicitly computed $E_6$, case (\ref{E6}) below, in example \ref{Ex:glE6}, and $E_8$, (\ref{E8}) below, with the same method. For $E_7$, case (\ref{E7}) below, and the $D_n$-curves we used $\add M$-approximations, a method which will be described in subsequent work. 
  \end{Bem}

\begin{enumerate}[leftmargin=*]
  \item \label{An} The $A_{n-1}$-singularity, $n$ odd: let $R=k[[x,y]]/(y^2+x^n)$,
  where $k$ is algebraically closed. Then $R_{1}=\End_R(\mf{m}) \cong
  k[[ x,y]]/(y^2+x^{n-2})$. By induction it follows that $R_{i}\cong
  k[[x,y]]/(y^2-x^{n-2i})$, and consequently that $R_{(n-1)/2}$ is
  equal to $\widetilde R$. Note that $R_{i}$ is (considered as
  $R$-module) isomorphic to the indecomposable $\CM$-module $I_i$
  (in the notation of \cite{Yoshino90}, see Prop.(5.11)
  loc.~cit.). Thus we see that 
$$M=\bigoplus_{i=0}^{\frac{n-1}{2}}R_{i}$$ 
is the sum of all indecomposable $\CM$ $R$-modules (see also
\cite[Prop.~(5.11)]{Yoshino90}). Since $R$ is a ring of finite $\CM$ type
and $M$ is a representation generator (i.e., any indecomposable $\CM$ module is a direct summand of $M$), Prop.~
\ref{Prop:globaldimfiniteCM} yields that the global dimension of
$\End_R(M)$ is 2. Note that for $n$ even,  the singularity $A_{n-1}$
can be analyzed similarly to yield $\gl \End_R M=2$. 
\item  \label{Dn}The $D_n$ singularities $R=k[[x,y]]/(x^2y+y^{n-1})$, $n \geq 4$: for odd $n$ a {\sc{Singular}} computation shows that $R_1 \cong (x,y^{n-2})\cong X_1$, where we use again Yoshino's \cite[p.77ff]{Yoshino90} notation for the $\CM$-modules. $X_1$ is (as a ring) isomorphic to the transversal union of a line and a curve singularity of type $A_{n-3}$. The further $R_i$'s can be explicitly computed, see \cite{BoehmDeckerSchulze}, Prop.~4.18: the ring $R_i$ is the disjoint union of a line and an $A_{n-2i-1}$-singularity. As an $R$-module $R_i$ is isomorphic to $A \oplus M_{i-1}$ for $2 \leq i \leq \frac{n-1 }{2}$. This can be seen by looking at the rank of the matrix factorizations of the $\CM$-modules: denote by $R_y=R/y$ the smooth component of $R$ and by $R_{x^2+y^{n-2}}$ the singular $A_{n-3}$-component. Then we denote the rank of an $\CM$-module $N$ on $R$ by $(a,b)$, where $a$ is the rank of $N\otimes_R R_y$ as $R_y$-module and $b$ is the rank of $N \otimes_R R_{x^2+y^{n-2}}$ as $R_{x^2+y^{n-2}}$-module. The line clearly corresponds to the module $A$, whose rank is $(1,0)$ and this is the only indecomposable $\CM(R)$-module supported only on $R_y$. On the other hand, the only indecomposable $\CM(R)$-modules supported only on $R_{x^2+y^{n-2}}$ are the $M_i$, which are of rank $(0,1)$. By localizing at $x^2+y^{n-2}$ it follows that on this component $M_i$ is isomorphic to the ideal $(x,y^{i})$ and as in case (1) one sees that this module is isomorphic to the $A_{n-2i-3}$-singularity $k[[x,y]]/(y^2-x^{n-2i-2})$.  \\
According to Leuschke's formula we have to take  $M$ to be $R \oplus X_1 \oplus \bigoplus_{i=1}^{\frac{n-3}{2}}M_i$. The endomorphism ring $\End_RM$  has  global dimension $3$, as will be shown in future work, see \cite{DohertyFaberIngalls}. \\
For even $n$ a {\sc Singular} computation shows that $R_1 \cong X_1$, and similar to the odd case one deduces $R_i \cong A \oplus M_{i-1}$ for $2 \leq i \leq \frac{n-2}{2}$ and $R_{\frac{n}{2}} \cong A \oplus D_{-}\oplus D_+$ is the normalization. Then $M=\bigoplus_{i=0}^{\frac{n}{2}}R_i$ has an endomorphism ring of global dimension $3$. Again, this example was computed using $\add M$-approximations.
\item \label{E6} $R=k[[x,y]]/(x^3+y^4)$, the $E_6$-singularity: in example \ref{Ex:glE6} it is computed that $M=\bigoplus_{i=0}^2R_i$ and that the global dimension of $A=\End_RM$ equals $3$. 
\item  \label{E7} $R=k[[x,y]]/(x^3+xy^3)$, the $E_7$-singularity: Using {\sc{Singular}}, one computes $R_1 \cong M_1$, $R_2 \cong Y_1$ and $R_3$ is the normalization, which is isomorphic to $A \oplus D$. Again with $\add M$-approximation (or the same method as in example \ref{Ex:glE6}) one can show that $\gl \End_R (\bigoplus_{i=0}^3 R_i)$ is $3$. 
\item \label{E8} $R=k[[x,y]]/(x^3+y^5)$, the $E_8$-singularity: one can compute that $M=\bigoplus_{i=0}^3R_i$, where a {\sc Singular} computation shows that $R_1 \cong M_1$, $R_2 \cong A_1$ and $R_3\cong A_2$ is the normalization. Similar to example \ref{Ex:glE6} (or with $\add(M)$-approximation) one sees that the global dimension of $A=\End_RM$ equals $3$.
\end{enumerate}

Let us now turn our attention towards the relationship between NCRs and rational singularities: 

\begin{Lem} \label{Lem:ncrComp}
Let $R$ be a commutative ring and let $\pp_1,\dots,\pp_n$ be the minimal primes of $R$. Suppose for each $i$, $M_i$ is a faithful $R/\pp_i$-module such that $\End_{R/\pp_i}(M_i)$ has finite global dimension.  Let $M=\bigoplus M_i$. Then $A=\End_R(M)$ has finite global dimension. 
\end{Lem}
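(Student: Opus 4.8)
The plan is to reduce to a local, reduced base, and then induct on the number of components. First, since each $M_i$ is a module over $R/\pp_i$ and the nilradical of $R$ equals $\bigcap_i \pp_i$, the module $M$ is annihilated by $\sqrt{0}$; hence $\End_R(M)=\End_{R_{\mathrm{red}}}(M)$, the minimal primes of $R_{\mathrm{red}}$ are the $\pp_i/\sqrt{0}$, and $R_{\mathrm{red}}/(\pp_i/\sqrt{0})=R/\pp_i$, so the hypotheses persist and I may assume $R$ is reduced with $\bigcap_i\pp_i=0$. Because $M$ is finitely generated, $A=\End_R(M)$ is module-finite over $R$, so by Corollary~\ref{finsimp} and Lemma~\ref{simplesupportm} every simple $A$-module is supported at a single maximal ideal of $R$; thus $\gldim A<\infty$ will follow once I bound $\gldim A_{\mf{m}}$ for each maximal ideal $\mf{m}$ by a constant independent of $\mf{m}$. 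Fixing $\mf{m}$ and using that $\Hom$ commutes with localization, I have $A_{\mf{m}}=\End_{R_{\mf{m}}}(M_{\mf{m}})$, and localizing kills every $M_i$ with $\pp_i\not\subseteq\mf{m}$; so only the components through $\mf{m}$ survive, and I am reduced to the case where $(R,\mf{m})$ is local, reduced, and every $\pp_i$ lies in $\mf{m}$.

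The essential structural input is that the $M_i$ have pairwise distinct generic supports $\supp M_i=V(\pp_i)$. For $i\neq j$, any $R$-linear map $M_i\to M_j$ has image killed by $\pp_i$ (on the source) and by $\pp_j$ (in the target), so $\Hom_R(M_i,M_j)$ consists of maps into the $(\pp_i+\pp_j)$-torsion and is supported on the \emph{intersection locus} $V(\pp_i+\pp_j)$, a proper closed subset of each component. Meanwhile the diagonal corners $e_iAe_i=\End_{R/\pp_i}(M_i)$ have finite global dimension by hypothesis. If all $M_i$ are torsion-free over $R/\pp_i$ these off-diagonal Hom-groups vanish, $A\cong\prod_i\End_{R/\pp_i}(M_i)$, and there is nothing to prove; the entire difficulty is concentrated in the interaction produced by torsion submodules of the $M_i$ that are supported on the intersection loci.

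To propagate finiteness across this interaction I would induct on the number $n$ of components, writing $M=M_n\oplus N$ with $N=\bigoplus_{i<n}M_i$, a module over $R/J$ for $J=\bigcap_{i<n}\pp_i$; by induction $\End_R(N)=\End_{R/J}(N)$ has finite global dimension. Using the idempotent $e$ projecting $M$ onto $N$, one has $eAe=\End_R(N)$, while $A/AeA$ is the quotient of $\End_{R/\pp_n}(M_n)$ by the trace ideal of endomorphisms factoring through $\add N$, an ideal supported on the seam $V(\pp_n+J)$. The strategy is then to invoke the homological theory of idempotent ideals: once $AeA$ is shown to be a stratifying ideal — i.e. $Ae\otimes_{eAe}eA\to AeA$ is an isomorphism and the higher $\operatorname{Tor}^{eAe}(Ae,eA)$ vanish — the resulting recollement yields a bound of the form $\gldim A\le \gldim(eAe)+\gldim(A/AeA)+c$ with absolute $c$, uniform over $\mf{m}$ since the number of minimal primes is finite. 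The main obstacle is exactly this homological compatibility together with the finiteness of $\gldim(A/AeA)$: both the vanishing of the higher $\operatorname{Tor}$ and the finite global dimension of the trace-ideal quotient must be wrung out of the fact that the off-diagonal maps, and hence $AeA$ and the defect of the quotient, are supported on the lower-dimensional locus $V(\pp_n+J)$. Establishing this support-driven tameness is where the real work lies; the reduction steps and the torsion-free vanishing above are routine.
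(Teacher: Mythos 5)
The part of your argument you dismiss as routine --- the vanishing of the off-diagonal Hom groups --- is in fact the entirety of the paper's proof, and everything you build after it is unexecuted. The paper argues in one line: for $i\neq j$ pick $x\in\pp_i\setminus\pp_j$; then $x$ annihilates $M_i$, hence kills any map $M_i\to M_j$, and $x$ acts as a non-zerodivisor on $M_j$, so the map is zero. Therefore $\End_R(M)=\prod_i\End_R(M_i)=\prod_i\End_{R/\pp_i}(M_i)$ and the conclusion is immediate; no reduction to the reduced or local case is needed. You are right that, read literally, ``faithful'' over the domain $R/\pp_j$ does not force every $x\notin\pp_j$ to act injectively on $M_j$ (e.g.\ $M_j=R/\pp_j\oplus(R/\pp_j)/(\bar x)$ is faithful but has $\bar x$-torsion), so your scruple is a legitimate reading of the hypothesis; the paper is implicitly treating the $M_j$ as torsion-free over $R/\pp_j$, which is what holds in its application to Theorem~\ref{Thm:NCRdim2}.

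As a standalone proof, however, your proposal has a genuine gap: the entire mechanism you introduce to handle the torsion case --- the induction on the number of components, the claim that $AeA$ is a stratifying ideal, the vanishing of the higher $\operatorname{Tor}^{eAe}(Ae,eA)$, the finiteness of $\gldim(A/AeA)$, and the resulting recollement bound --- is announced but never established. You say yourself that this is ``where the real work lies,'' and none of that work is done; in particular, it is not at all clear that the mere fact that $AeA$ is supported on the intersection locus $V(\pp_n+J)$ yields either the stratifying condition or a bound on $\gldim(A/AeA)$, so the inequality $\gldim A\le\gldim(eAe)+\gldim(A/AeA)+c$ you want to invoke is not available. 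The two honest options are to interpret the hypothesis as torsion-freeness (as the paper implicitly does), in which case your observation that the off-diagonal Homs vanish already finishes the proof, or to actually prove the homological claims you list, which you have not done.
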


\begin{proof}
First we observe that $\Hom_R(M_i, M_j)=0$ for $i\neq j$ (pick an element $x \in \pp_i$ but not in $\pp_j$, any map from $M_i$ to $M_j$ must be killed by $x$ which is a non-zerodivisor on $M_j$, showing that the map is zero). Thus $\End_R(M) =\prod \End_{R}(M_i) =  \prod \End_{R/\pp_i}(M_i)$ so it has finite global dimension. 
\end{proof}

\begin{Thm} \label{Thm:NCRdim2}
Let $(R, \mf{m})$ be a reduced $2$-dimensional, Henselian local ring.  The following are equivalent: 
\begin{enumerate}[leftmargin=*]
\item $R$ has a NCR. 
\item $\widetilde R$ has only rational singularities. 
\end{enumerate}
\end{Thm}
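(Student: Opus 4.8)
The plan is to reduce to the case of a two-dimensional normal domain and there to match the existence of an NCR with rational singularities, carrying information back and forth across the normalization map.

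\textbf{Reduction to a normal domain.} First I would decompose along minimal primes. Writing $\pp_1,\dots,\pp_n$ for the minimal primes of the reduced ring $R$, set $R_i=R/\pp_i$ and recall $\widetilde R\cong\prod_i\widetilde{R_i}$, where each $\widetilde{R_i}$ is a normal Henselian local domain (of dimension $\le 2$; the lower-dimensional factors normalize to regular rings and are harmless). Condition (2) is exactly that every factor $\widetilde{R_i}$ has rational singularities. On the module side I would invoke Lemma~\ref{Lem:ncrComp}: to build an NCR of $R$ it suffices to produce, for each $i$, a faithful $R_i$-module whose endomorphism ring has finite global dimension. Thus the theorem reduces to the following assertion for a two-dimensional normal Henselian local domain $S=\widetilde{R_i}$: $S$ has rational singularities if and only if the corresponding $R_i$ carries such a module. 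The passage between $R_i$ and $S$ will be controlled by the functor $F(-)=((-)\otimes_{R_i}S)^{**}$, Proposition~\ref{nccrovernormal}, and Lemma~\ref{lemCodimOne} (isomorphism in codimension one), working as needed over a field of characteristic zero with algebraically closed residue field.

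\textbf{Rational $\Rightarrow$ NCR.} Here I would use the construction of noncommutative crepant resolutions for rational surface singularities: for a two-dimensional rational singularity $S$, the direct sum $N$ of $S$ with its finitely many special Cohen--Macaulay modules is reflexive, faithful, and $\End_S(N)$ has finite global dimension, hence is a NCCR of $S$ (special McKay correspondence; cf.\ \cite{vandenBergh04}). It then remains to descend this NCCR of $S=\widetilde{R_i}$ to a genuine NCR of $R_i$ along the module-finite extension $R_i\subseteq S$, and to reassemble the pieces via Lemma~\ref{Lem:ncrComp}.

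\textbf{NCR $\Rightarrow$ rational.} Given a faithful $M$ with $\gldim\End_S(M)<\infty$, the decisive observation is that in dimension two reflexive equals maximal Cohen--Macaulay (equivalently $(\Se_2)$): for a reflexive $S$-module $N$ the ring $\End_S(N)=\Hom_S(N,N)$ is again reflexive, hence MCM over $S$, so \emph{any} NCR of $S$ realised by a reflexive module is automatically a non-singular order, i.e.\ a NCCR. I would therefore upgrade $M$ to a reflexive module (its reflexive hull, or $F(M)$ after transporting to $S$) while preserving finiteness of global dimension, obtain a NCCR, and conclude by Proposition~\ref{Prop:centreNCCR} together with the rationality of the centre of a homologically homogeneous algebra (the Corollary following Proposition~\ref{nccrovernormal}, via Stafford--Van den Bergh \cite{StaffVdB08}), passing to the completion so that the rationality statement applies in the Henselian rather than the finitely generated setting. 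Since the centre of this NCCR is $S$ itself, this gives exactly that $S$ has rational singularities.

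\textbf{Main obstacle.} The essential difficulty in both directions is the transfer of \emph{finite global dimension} across the two operations involved: the reflexive-hull (equivalently, the functor $F$) and restriction of scalars along $R_i\subseteq\widetilde{R_i}$. Concretely, for NCR~$\Rightarrow$~rational I must show that replacing $M$ by a reflexive module keeps $\gldim\End_S<\infty$, and for rational~$\Rightarrow$~NCR I must descend the NCCR of $\widetilde{R_i}$ to an endomorphism ring over $R_i$ of finite global dimension, where the naive choice $\End_{R_i}(N)$ fails because it strictly contains $\End_{\widetilde{R_i}}(N)$. In both cases I expect to exploit that the comparison map $\End_{R_i}(M)\to\End_{\widetilde{R_i}}(F(M))$ is an isomorphism in codimension one (Lemma~\ref{lemCodimOne}) and that depth is automatic in dimension two, so that only the homological bookkeeping of global dimension remains genuinely to be controlled.
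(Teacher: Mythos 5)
Your direction (2)$\Rightarrow$(1) is essentially the paper's argument (decompose along minimal primes, produce an NCR of each normalization $\widetilde{R_i}$, reassemble via Lemma~\ref{Lem:ncrComp}), but the ``main obstacle'' you flag there is not one, and your description of it is false: for a torsion-free module $N$ over $S=\widetilde{R_i}\subseteq Q(R_i)$ one has $\End_{R_i}(N)=\End_S(N)$ on the nose (any $R_i$-linear endomorphism extends to a $Q$-linear one and is therefore $S$-linear), so descent is immediate --- this is exactly Lemma~1 of \cite{Leuschke07}, which the paper uses elsewhere. Also, the algebra built from the special Cohen--Macaulay modules of a rational surface singularity is an NCR but in general \emph{not} a NCCR (its global dimension is $3$ when $S$ is not Gorenstein); you only need the NCR statement, so this is a misstatement rather than a fatal error.

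The direction (1)$\Rightarrow$(2) is where your proposal genuinely breaks down, in two places. First, you never justify that passing from $M$ to its reflexive hull (or to $F(M)$) preserves finiteness of $\gldim\End$; you name this as the difficulty and leave it unresolved, and no argument in the paper supplies it. Second, and more seriously, the step ``a reflexive module in dimension two gives an endomorphism ring that is MCM, hence any such NCR is automatically a non-singular order, i.e.\ a NCCR'' is false: MCM plus finite global dimension forces $\gldim=\dim R$ only under a Gorenstein hypothesis (this is Van den Bergh's observation quoted after the definition of NCCR). The reconstruction algebras of non-Gorenstein rational surface singularities are endomorphism rings of reflexive modules, are MCM, have finite global dimension $3\neq 2$, and are not non-singular orders --- so your proposed upgrade from NCR to NCCR, and with it the appeal to \cite{StaffVdB08} via Proposition~\ref{Prop:centreNCCR}, cannot work. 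The paper takes a completely different route here: from the NCR it deduces that $M$ is a generator away from a one-dimensional closed subset, hence the Grothendieck group $G(R)$ is finitely generated (Cor.~2.2 of \cite{DaoIyamaTakahashiVial}), hence $G(\widetilde R)_\Q$ is finitely generated by \cite{AuslanderReiten88}, and then invokes the criterion (Cor.~3.3 of \cite{DaoIyamaTakahashiVial}, ultimately Lipman's theorem on divisor class groups of surface singularities) that a two-dimensional normal domain with finitely generated class group has rational singularities. You would need to replace your NCCR argument with something of this kind.
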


\begin{proof}
Assume that $R$ has a NCR given by $A=\End_R(M)$.  Since $M$ is faithful and $M$ is locally free on the minimal primes of $\Spec(R)$, we have that $M$ is a locally a generator outside a closed subscheme of dimension at most one.
By Cor.~2.2 of \cite{DaoIyamaTakahashiVial} the Grothendieck group
$G(R)$ is finitely generated. Thus by \cite{AuslanderReiten88} the
group $G(\widetilde R)_\Q$ is finitely generated. The normalization
$\widetilde R$ is the direct product of the normalizations of the
irreducible components of $R$.  Thus it suffices to assume that $R$ is a domain. Then by Cor 3.3 of \cite{DaoIyamaTakahashiVial} the normalization $\widetilde R$ has rational singularities.  \\
For the other implication, suppose that $\widetilde R$ has only rational singularities. Let $\mf{p}_1, \ldots, \mf{p}_r$ be the minimal primes of $R$. Let $\widetilde R_i$ be the normalization of $R/\mf{p}_i$. Then by assumption, each $\widetilde R_i$ has rational singularities. By  3.3 of \cite{DaoIyamaTakahashiVial} we can find $M_1, \ldots, M_r$ such that $M_i$ is a NCR of $R_i$. Take $M=\bigoplus_{i=1}^rM_i$, then $M$ gives a NCR of $R$ by Lemma \ref{Lem:ncrComp}.
\end{proof}

\begin{Cor} \label{Cor:generatorNCCRdim2}
Let $R$ be complete local ring of dimension $2$. Then there
is a generator $M$ giving a NCCR if and only if $R$ is a quotient singularity.
\end{Cor}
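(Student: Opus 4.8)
The plan is to combine the structural input of Lemma~\ref{Lem:generatornccr} with the classical classification of two-dimensional complete local rings of finite $\CM$ type. For the forward implication, suppose $M$ is a generator giving a NCCR $A=\End_R(M)$. Since $R$ is complete local, by Cohen's structure theorem it is a homomorphic image of a regular (hence Gorenstein) local ring, so the hypotheses of Lemma~\ref{Lem:generatornccr} are satisfied. That lemma yields that $R$ is normal (in particular a normal domain) and of finite $\CM$ type. It then remains to invoke the classification of two-dimensional normal singularities of finite $\CM$ type: by the theorem of Auslander and Esnault, building on Herzog, a complete normal two-dimensional local ring of finite $\CM$ type is a quotient singularity. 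This gives the desired conclusion.

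For the converse, suppose $R$ is a quotient singularity. Then $R$ is normal, and by Herzog's theorem it has finite $\CM$ type; moreover it has an isolated singularity. Let $M=\bigoplus_i M_i$ be a representation generator, i.e.\ the direct sum of $R$ together with one copy of each of the finitely many indecomposable $\CM$ $R$-modules. Since $R$ is a direct summand of $M$, the module $M$ is a generator. I claim $A=\End_R(M)$ is a non-singular order. First, because $\dim R=2$ and each $M_i$ is $\CM$, every $\Hom_R(M_i,M_j)$ satisfies $(\Se_2)$ and is therefore $\CM$, so $A$ is $\CM$ over $R$. Second, at the maximal ideal $\mf{m}$ we have $\gl A=2=\dim R$ by Proposition~\ref{Prop:globaldimfiniteCM} (Auslander's global dimension formula for representation generators in dimension two). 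For a prime $\mf{p}\neq\mf{m}$, the ring $R_\mf{p}$ is regular of dimension at most one, each $(M_i)_\mf{p}$ is free, so $A_\mf{p}$ is a full matrix ring over $R_\mf{p}$ and hence $\gl A_\mf{p}=\dim R_\mf{p}$. Thus $A$ is a non-singular order and $M$ gives a generator NCCR, as required.

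The main obstacle is the deep classification input used in the forward direction, namely the identification of two-dimensional complete normal local rings of finite $\CM$ type with quotient singularities (Auslander--Esnault). The remaining steps are routine assemblies of results already available: the reduction to Lemma~\ref{Lem:generatornccr} via Cohen's structure theorem, Herzog's theorem that quotient singularities have finite $\CM$ type, and the explicit verification that the endomorphism ring of a representation generator is a non-singular order by checking the global dimension condition prime by prime.
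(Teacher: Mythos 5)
Your argument is correct and follows essentially the same route as the paper: the forward direction is Lemma~\ref{Lem:generatornccr} (finite $\CM$ type plus normality) followed by the classification of two-dimensional complete normal rings of finite $\CM$ type as quotient singularities, and the converse takes the representation generator and checks that its endomorphism ring is a non-singular order of global dimension $2$. One citation should be fixed: Proposition~\ref{Prop:globaldimfiniteCM} is stated only for \emph{one}-dimensional rings, so it cannot be invoked for $\gl A=2$ here; the result you actually want is Auslander's theorem for two-dimensional rings of finite $\CM$ type (Thm.~P.2 of \cite{Leuschke12}, which is what the paper cites), and with that substitution your prime-by-prime verification of the non-singular order condition is fine.
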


\begin{proof}
Assume that there is a generator $M$ giving a NCCR. By Lemma
\ref{Lem:generatornccr} $R$ has finite $\CM$-type. Therefore $R$ is a
quotient singularity, see \cite[Thm. 7.19]{LeuschkeWiegand}. For the converse, if $R$ is a quotient singularity, then by Herzog's theorem, cf. \cite[Thm. 6.3]{LeuschkeWiegand}, $R$ is of finite $\CM$-type. Taking as $M$ the sum of all indecomposables in $\CM(R)$ gives $\End_RM$ homologically homogeneous of global dimension $2$ (cf.~Thm.~P.2 of \cite{Leuschke12}; note here that since $R$ is a quotient singularity, it is equidimensional and so $\End_RM$ is homologically homogeneous if and only if all simples have the same projective dimension). 
\end{proof}

\subsection{Applications: NCRs and free divisors}\label{ApFD}

Here we discuss Question \ref{Qu:Buchweitz} in more detail.
The following observation is due to Buchweitz:
In complex analytic geometry, so-called free divisors in complex
  manifolds are studied; a hypersurface $D$ in $\C^n$ is called
  free at a point $p \in \C^n$ if and only if its module of
  logarithmic derivations $\Der_{\C^n,p}(-\log D)$ is a free module over
  the local ring $\calo_{\C^n,p}$, see  \cite{Saito80}, 
  or equivalently, the Jacobian ideal of $D$ is a maximal Cohen--Macaulay module over $\calo_{D,p}$, 
  cf. \cite{Aleksandrov90}. In particular this implies that a singular free divisor is nonnormal. The simplest example of a free divisor is a simple normal crossing divisor.
  Free divisors occur frequently in various contexts, see e.g.,
  \cite{BuchweitzMond, GrangerMondSchulze11, Looijenga84, Terao80}.  In \cite{Buchweitz06} Buchweitz, Ebeling and
  von Bothmer study instances when the discriminants of certain morphisms between analytic spaces yield free divisors. 
  Moreover, it
  is known that the critical locus (i.e, the pre-image of the discriminant) of a versal morphism between smooth
  spaces is a determinantal variety, see \cite{Looijenga84}.  
Now consider the  the generic determinantal singularity
described by maximal 
minors of the generic $n\times m$ matrix and denote by $R$ its coordinate ring.
In \cite{BLvdB10} a noncommutative crepant desingularization $A=\End_RM$ of $R$ is constructed.
For any determinantal singularity $X$ we have a map $X \rightarrow
\Spec R$ obtained by specializing the generic matrices.
So we can pull back the NCCR $A$, or, alternatively, 
we can pull back the module $M$ via this map.  One may hope to that either of these will
determine a NCCR of $X$, which may in particular be a  free divisor.

Corollary \ref{Cor:generatorNCCRdim2} yields strong necessary conditions for the existence of a NCCR of a normal crossing divisor:

\begin{Cor}  Denote
  by $R=k[[x_1, \ldots, x_n]]/(x_1 \cdots x_m)$ with $2 \leq m \leq n$
  a normal crossing ring. Suppose $M=\bigoplus_{i=1}^k M_i$ gives a NCCR
  $A:= \End_R(M)$. Then none of the ``partial normalizations''
  $R/(x_{i_1} \cdots x_{i_l})$ for $2 \leq l \leq m$ and $1 \leq i_1 <
  \cdots < i_l \leq m$  is a direct summand of one of the $M_i$.
  \end{Cor}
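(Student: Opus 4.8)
The plan is to combine the identification of the centre of a NCCR with the normalization (Proposition~\ref{Prop:centreNCCR}) with the fact that each partial normalization is \emph{indecomposable} as a module but supported on several components. Since $R=k[[x_1,\dots,x_n]]/(x_1\cdots x_m)$ is local it is equicodimensional, and since $(x_1\cdots x_m)$ is radical it is reduced; its normalization is
$\widetilde R=\prod_{j=1}^m S_j$ with $S_j=k[[x_1,\dots,\widehat{x_j},\dots,x_n]]$ the coordinate ring of the component $\{x_j=0\}$. As $M$ gives a NCCR over the equicodimensional ring $R$, Proposition~\ref{Prop:centreNCCR} yields $Z(A)=\widetilde R$. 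In particular $A=\End_R(M)$ contains the orthogonal central idempotents $e_1,\dots,e_m\in\widetilde R$ with $\sum_j e_j=1$, where $e_j$ cuts out the $j$-th component. I would then argue that no indecomposable module spread over two or more components can be a summand of $M$.

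\textbf{The central idempotents survive on summands.} Suppose, for contradiction, that $N:=R/(x_{i_1}\cdots x_{i_l})$ with $2\le l\le m$ is a direct summand of some $M_i$, hence of $M$; let $\pi_N\in A$ be the corresponding idempotent projection. Because each $e_j$ is central it commutes with $\pi_N$, so $e_j$ restricts to an idempotent $R$-endomorphism of $N$, and the relations $e_je_{j'}=\delta_{jj'}e_j$, $\sum_j e_j=1$ give a decomposition $N=\bigoplus_{j=1}^m e_jN$ as $R$-modules. On the other hand $N$ is indecomposable: as a quotient of the local ring $R$ it is itself local, so $\End_R(N)\cong N$ has no nontrivial idempotents. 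So it suffices to show that at least two of the summands $e_jN$ are nonzero.

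\textbf{Detecting the support.} To count the nonzero $e_jN$, I would pass to the total quotient ring $Q(R)=\prod_{j=1}^m Q_j$, where $Q_j$ is the fraction field of $S_j$ and $e_j$ acts as projection onto the $j$-th factor. The module $N$ is torsion-free over $R$: identifying $N=k[[x_1,\dots,x_n]]/(x_{i_1}\cdots x_{i_l})$ one checks its associated primes are exactly the minimal primes $(x_{i_1}),\dots,(x_{i_l})$ of $R$, so $N$ embeds into $N\otimes_R Q(R)$. A direct computation gives $N\otimes_R Q(R)=\prod_{t=1}^l Q_{i_t}$. Hence for $j\notin\{i_1,\dots,i_l\}$ the idempotent $e_j$ acts as zero on $N\otimes_R Q(R)$, forcing $e_jN=0$, while for each $t$ one has $(e_{i_t}N)\otimes_R Q(R)=Q_{i_t}\neq 0$, forcing $e_{i_t}N\neq 0$. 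Thus $N=\bigoplus_{t=1}^l e_{i_t}N$ is a direct sum of $l\ge 2$ nonzero $R$-modules, contradicting indecomposability. (The extreme case $l=m$, where $N=R$ is a generator, also follows immediately from Proposition~\ref{Prop:generatorNCCR}, since the normal crossing ring $R$ is not normal.)

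\textbf{Main obstacle.} The step I expect to require the most care is making precise that the abstract $\widetilde R$-module structure on $N$ coming from the centre $Z(A)$ genuinely records the geometric support of $N$ among the components of $\Spec R$, so that the central idempotents $e_{i_t}$ act nontrivially on exactly the $l$ components in the support. This is precisely where torsion-freeness of $N$ (inherited from the torsion-free module $M$) enters: it lets one embed $N\hookrightarrow N\otimes_R Q(R)$ and identify the action of each $e_j$ with projection onto a component, after which the decomposition and the contradiction are immediate.
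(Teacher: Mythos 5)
Your argument is correct, but it follows a genuinely different route from the paper's. The paper localizes at the height-two prime $\mf{p}=(x_{i_1},x_{i_2})$: there the partial normalization becomes $R_\mf{p}$ itself, so $M_\mf{p}$ would be a generator giving a NCCR of the nonnormal two-dimensional local ring $R_\mf{p}$, contradicting Corollary~\ref{Cor:generatorNCCRdim2} (ultimately Proposition~\ref{Prop:generatorNCCR}). You instead work globally: Proposition~\ref{Prop:centreNCCR} puts the orthogonal central idempotents of $Z(A)=\widetilde R$ into $A$, these commute with the projection onto any summand, and a partial normalization $N=S/(x_{i_1}\cdots x_{i_l})$ is a local ring, hence indecomposable, yet is supported on $l\geq 2$ components, so at least two of the pieces $e_jN$ are nonzero --- a contradiction. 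Your handling of the delicate point (that the abstract $\widetilde R$-action really detects the support) via torsion-freeness and the passage to $Q(R)=\prod Q_j$ is sound, and matches how the centre is identified inside $Q$ in the proof of Proposition~\ref{Prop:centreNCCR}. The trade-off: the paper's localization proof is shorter and leans on the already-established dimension-two classification, whereas your argument avoids that machinery entirely and in fact proves something stronger --- every indecomposable summand of a module giving a NCCR of the normal crossing ring must be supported on a single irreducible component, i.e., is a module over a single $S_j=k[[x_1,\ldots,\widehat{x_j},\ldots,x_n]]$.
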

  
\begin{proof}
To see this, suppose that $M_1=(R/(x_{i_1} \cdots x_{i_l}))^{\oplus_s}$. Take the height $2$ prime ideal $\mf{p}=(x_{i_1}, x_{i_2}) \in R$ and localize $R$ at $\mf{p}$. Then $(M_1)_{\mf{p}}=R_{\mf{p}}^{\oplus_s}$ implies that $R_\mf{p}$ is contained in $\mathrm{add}(M_\mf{p})$. But since the definition of NCCR localizes and $R_{\mf{p}}$ is a nonnormal ring of dimension 2, this yields a contradiction to Corollary \ref{Cor:generatorNCCRdim2}. 
 \end{proof}

\begin{Cor}
Suppose that $R$ is a reduced $d$-dimensional, Henselian local ring. If $R$ has a NCR, the normalization of $R$ has only rational singularities in codimension $2$. If $R$ has a generator giving a NCCR, then $R$ is already normal.
\end{Cor}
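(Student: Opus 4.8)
The second assertion requires nothing new: a generator giving a NCCR already forces normality by Proposition~\ref{Prop:generatorNCCR}, so I would simply cite that. The whole task is the first assertion, and the plan is to localize in order to reduce to the two-dimensional case of Theorem~\ref{Thm:NCRdim2}. First I would record what the conclusion means: since $\widetilde R$ is normal it is regular in codimension one and satisfies $(\Se_2)$, so every prime of height at most one is a regular point, and ``rational singularities in codimension $2$'' says exactly that $\widetilde R_{\mathfrak q}$ is a rational singularity for each height-two prime $\mathfrak q$ of $\widetilde R$. Writing $\widetilde R=\prod_i\widetilde R_i$ for the finite product of the normalizations of the components $R/\mathfrak p_i$, such a $\mathfrak q$ sits inside a single normal factor $\widetilde R_i$, so it suffices to show the two-dimensional normal local domain $(\widetilde R_i)_{\mathfrak q}$ is rational.

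The route I would take is the Grothendieck-group argument already used in the proof of Theorem~\ref{Thm:NCRdim2}. A NCR $A=\End_R(M)$ makes $G(R)$ finitely generated by Cor.~2.2 of \cite{DaoIyamaTakahashiVial}, whence $G(\widetilde R)_{\Q}$ is finitely generated by \cite{AuslanderReiten88}, and since $\widetilde R_i$ is a direct factor its summand $G(\widetilde R_i)_{\Q}$ is finitely generated as well. Next I would localize in $G$-theory: because $\Spec(\widetilde R_i)_{\mathfrak q}$ is a filtered intersection of open subschemes, $G((\widetilde R_i)_{\mathfrak q})$ is a quotient of $G(\widetilde R_i)$, so $G((\widetilde R_i)_{\mathfrak q})_{\Q}$ stays finitely generated. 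Thus $(\widetilde R_i)_{\mathfrak q}$ is a two-dimensional normal local domain with finitely generated rational Grothendieck group, and the two-dimensional criterion of \cite{DaoIyamaTakahashiVial} underlying Theorem~\ref{Thm:NCRdim2} identifies it as a rational singularity; I would apply that criterion after passing to the Henselization, where it is available, using that rationality of a normal singularity is preserved and reflected by Henselization. Ranging over all $\mathfrak q$ and all factors $i$ then gives the assertion in the characteristic-zero context inherited from Theorem~\ref{Thm:NCRdim2}.

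The step I expect to demand the most care is precisely this passage from the global finiteness of $G(\widetilde R)_{\Q}$ to the local statement at $\mathfrak q$: I must verify that finite generation of $G_{\Q}$ is preserved both by the localization functor and by the ind-\'etale Henselization at which the cited two-dimensional rationality criterion is phrased, and that the class of rational singularities is stable and reflected under these operations. This is also exactly what lets the corollary dispense with any equidimensionality hypothesis on $R$: a height-two point lying on a \emph{lower}-dimensional normalization component is reached through $G$-theory rather than by localizing the original NCR, for which the support condition could only produce a localization of $R$ of dimension larger than two. Everything else is the routine descent of finite global dimension and full support along localization that is already implicit in the proof of Theorem~\ref{Thm:NCRdim2}.
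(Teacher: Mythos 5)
Your treatment of the second assertion is fine and agrees with the paper (Proposition~\ref{Prop:generatorNCCR} applies verbatim). For the first assertion, however, you have replaced the paper's argument --- which simply localizes the NCR itself ($\gl \End_{R_\pp}(M_\pp)\le \gl\End_R(M)<\infty$ and $\supp M_\pp=\supp R_\pp$, see \cite[Lemma 3.5]{DaoIyamaTakahashiVial}, then passes to the Henselization and invokes Theorem~\ref{Thm:NCRdim2}) --- by an argument that localizes the \emph{Grothendieck group} instead, and the step you yourself flag as needing care is exactly where the argument breaks. The two-dimensional rationality criterion of \cite{DaoIyamaTakahashiVial} underlying Theorem~\ref{Thm:NCRdim2} is a statement about Henselian (or complete) normal local domains, and finite generation of $G(-)_{\Q}$ is \emph{not} preserved under Henselization: the class group of a local ring only injects into that of its Henselization, and the target can be vastly larger. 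The standard counterexample to the non-Henselian version of the criterion is the vertex of the cone over an elliptic curve with finite Mordell--Weil group: its (non-Henselian) local ring has finitely generated divisor class group, hence finitely generated $G_{\Q}$, yet it is not a rational singularity. So from "$G((\widetilde R_i)_{\mf{q}})_{\Q}$ is finitely generated" you cannot conclude rationality of $(\widetilde R_i)_{\mf{q}}$, and you cannot repair this by Henselizing first, because the finite generation does not travel with you.

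The fix is to transport the NCR rather than the Grothendieck group: finite global dimension and full support descend to $R_\pp$ and then ascend along the flat map to the Henselization (or completion), so one lands in the setting of Theorem~\ref{Thm:NCRdim2} with an honest NCR in hand, and only \emph{then} runs the $G$-theoretic argument inside that Henselian local ring. Your closing remark about non-equidimensional $R$ does point at a real wrinkle in the height bookkeeping for $\mf{q}\cap R$, but the $G$-theory detour does not resolve it --- it only trades a catenarity argument for a false localization principle.
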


\begin{proof}
NCR and NCCR localize, see for example \cite[Lemma 3.5]{DaoIyamaTakahashiVial}.
\end{proof}

\begin{Qu}
Assume that $R$ has a NCR of global dimension 2. Does that imply that the normalization of $R$ has only quotient singularities? (The converse is true by the previous corollary).
\end{Qu}

\begin{Qu}
Assume that $R$ has dimension one and a NCR $A$ of global dimension two.  Classify all possible $R$ and $A$.
\end{Qu}

\begin{Bem}
In \cite{GoodearlSmall} it is shown that $\dim A \leq \gldim A$.
\end{Bem}

If $R$ is standard graded with an isolated singularity  
a result of Watanabe \cite[Theorem 2.2]{Watanabe81} says that $R$ has a rational singularity if and only if
$$a(R) := -\min \{n: [\omega_R]_n \neq 0\} <0, $$
where $\omega_R$ is the graded canonical module. 
One may explicitly calculate the $a$-invariant: Let $R$ be a positively graded $k$-algebra, $k$ a field. Then by e.g. \cite{BrunsHerzog93} the Hilbert series of $R$ is 
$$H_R(t)=\frac{Q(t)}{\prod_{i=1}^d (1-t^{a_i})} \textrm{ with } Q(1)>0,$$
where $d$ is the dimension of $R$, the  $a_i$ are positive integers and $Q(t) \in \Z[t,t^{-1}]$. Then $a(R)=\deg H_R(t)$ is the degree of the rational function $H_R(t)$, see e.g. \cite{BrunsHerzog93}, Theorems 4.4.3 and 3.6.19. \\
Note that in the case of a quasi-homogeneous isolated complete intersection singularity, rationality can be easily determined with \emph{Flenner's rationality criterion}, see \cite[Korollar 3.10]{Flenner81}, which can also be deduced from Watanabe's result: let $k$ be a field of characteristic zero, $R=k[x_1, \ldots, x_{n+r}]/(f_1, \ldots, f_r)$ be a quasi-homogeneous complete intersection with $\mathrm{weight}(x_i)=w_i >0$ and $\deg f_i =d_i >0$. Then $R$ has a (isolated) rational singularity at the origin if and only if $w_1 + \cdots + w_{n+r} > d_1 + \cdots + d_r$. \\

The following example sheds more light on Buchweitz's question (question \ref{Qu:Buchweitz} about NC(C)Rs of free divisors). However, we have not been able to produce an example of an \emph{irreducible} free divisor that does not have a NCR.
 
\begin{ex} \label{Ex:freediv} (free divisor with non-rational normalization) Let $(D,0) \subseteq (\C^n,0)$ be a divisor given by a reduced equation $h=0$. Let $R:=\C \{x_1, \ldots, x_n \}/(h)$. Suppose that $h=h_2 \cdots h_n$, where $h_i=\sum_{j=1}^i x_j^k$ for some $k >n$. By Prop. 5.1 (or Example 5.3) of \cite{BuchweitzConca}, $h$ defines a free divisor $D$. Since $k > n$, the normalization $\widetilde D$ of $D$ does not have rational singularities. The normalization $\widetilde R$ of $R=\C \{x_1, \ldots, x_n\}/(h_2 \cdots h_n)$ is $\bigoplus_{i=2}^n \widetilde R_i$, where $R_i=\C \{x_1, \ldots, x_n\}/(h_i)$. Each $h_i$ is homogeneous and by Flenner's rationality criterion each $\widetilde R_i$ has a rational singularity if and only if $k \leq n$. By Thm.~\ref{Thm:NCRdim2} $R$ does not have a NCR.
\end{ex}

\begin{ex} (Simis' quintic) The homogeneous polynomial $h=-x^5 + 2x^2y^3 + xy^4 + 3y^5 + y^4z$ gives rise to an irreducible free divisor $D$ in $\C^3$. Its normalization is a homogeneous Cohen--Macaulay ring, but not Gorenstein. 
The $a$-invariant of the normalization is $-1$, which shows that it has a rational singularity. By Theorem \ref{Thm:NCRdim2}, $D$ has a NCR. 
The $a$-invariant of the original ring $\C[x,y,z]/(h)$ is $2$. 
\end{ex}
 
 \begin{ex} 
(linear free divisors) A linear free divisor (see \cite{BuchweitzMond} for the definition) is given by a homogeneous polynomial $h(x_1, \ldots, x_n)$ of degree $n$. Then $R=\C[x_1, \ldots, x_n]/(h)$ has $a$-invariant $a(R)=0$, since $\deg H_R(t)=n-n=0$.  By Lemma \ref{Lem:ainvariant} the normalization of $\C[x_1,\ldots ,x_n]/(h)$ has a rational singularity. For example, the \emph{discriminant in the space of cubics} (cf. \cite{GMNS}) in $\C^4$ has equation $h=y^2z^2-4xz^3-4y^3w+18xyzw-27x^2w^2$. Its normalization has a rational singularity, can be embedded in $\C^6$ and is Cohen--Macaulay but not Gorenstein.
\end{ex}

The results above  suggest:  

\begin{Qu}
Let $(D,0) \subseteq (\C^n,0)$ be an irreducible  free divisor given by a reduced equation $h=0$. Let $R:=\C \{x_1, \ldots, x_n \}/(h)$. Does the normalization $\widetilde R$ always have rational singularities?
\end{Qu}

\begin{Bem} The question about possible singularities of normalizations of free divisors is quite subtle. 
For example, in the case of discriminants of versal deformations of isolated hypersurface singularities (which are always free divisors, by \cite{Saito81}), one knows that the normalization is always smooth, see \cite{Teissier77}. In \cite[Conjecture 26]{Faber12a} it was asked whether the normalization of a free divisor with radical Jacobian ideal is always smooth.
\end{Bem}

\begin{Lem} \label{Lem:ainvariant}
 Let $R$ be a Gorenstein graded ring, and $R \lra S$ be a birational graded integral extension which is not an isomorphism in codimension $1$. Then 
$$a(S) < a(R).$$
\end{Lem}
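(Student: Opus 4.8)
The plan is to express both invariants through graded canonical modules and to collapse the entire comparison onto the initial degree of the conductor of $S$ in $R$. Since $R$ is Gorenstein and graded, its graded canonical module is a single twist of $R$, namely $\omega_R\cong R(a(R))$; this is precisely what the equality $a(R)=-\min\{n:[\omega_R]_n\neq 0\}$ says once one knows that $\omega_R$ is free of rank one. Write $Q=Q(R)$ for the total quotient ring. Since $R\lra S$ is birational and integral we may regard $R\subseteq S\subseteq Q$ with $S$ finite over $R$ and $\dim S=\dim R=:d$, and the total quotient rings of $R$ and $S$ agree.

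First I would compute $\omega_S$. Because $S$ is module-finite over $R$ and $\dim S=d$, graded local duality over $R$ (equivalently, the standard formula $\omega_S\cong\Hom_R(S,\omega_R)$ for the canonical module of a finite extension, cf.\ \cite{BrunsHerzog93}) provides a degree-preserving isomorphism $\omega_S\cong\Hom_R(S,\omega_R)\cong\Hom_R(S,R)(a(R))$. Next I would identify $\Hom_R(S,R)$ with the conductor. Tensoring a graded map $\varphi\colon S\to R$ with $Q$ turns it into a $Q$-linear endomorphism of $Q\otimes_R S\cong Q$, hence into multiplication by a unique $q\in Q$ of the same degree as $\varphi$, and the condition $\varphi(S)\subseteq R$ becomes $qS\subseteq R$. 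Thus $\Hom_R(S,R)\cong\mf{c}$, where $\mf{c}=\{q\in Q:qS\subseteq R\}$, and $\mf{c}\subseteq R$ since $1\in S$, so $\mf{c}$ is a graded ideal of $R$. Combining, $\omega_S\cong\mf{c}(a(R))$; reading off the lowest nonzero graded piece (using $[\mf{c}(a(R))]_n=\mf{c}_{n+a(R)}$) gives
$$a(S)=a(R)-c_0,\qquad c_0:=\min\{m:\mf{c}_m\neq 0\}.$$

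It then remains to prove $c_0\geq 1$, i.e.\ $\mf{c}_0=0$. If instead $\mf{c}_0\neq 0$, then $\mf{c}$ contains a nonzero element of degree $0$, which is a unit of $R_0=k$; hence $\mf{c}=R$, so $1\cdot S\subseteq R$ and $S=R$. This contradicts the assumption that $R\lra S$ is not an isomorphism in codimension one (in particular $R\neq S$). Therefore $c_0\geq 1$ and $a(S)=a(R)-c_0\leq a(R)-1<a(R)$, as desired.

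The step I expect to require the most care is the identification $\omega_S\cong\Hom_R(S,\omega_R)$ with the correct grading, since $S$ is not assumed Cohen--Macaulay. I would secure it by passing to the graded top local cohomology $H^d_{\mf{m}}(S)$: as $S$ is finite over $R$ and $\mf{m}_R S$ is $\mf{m}_S$-primary, this module may be computed over $R$, and graded local duality over the Gorenstein ring $R$ identifies its graded $k$-dual with $\Hom_R(S,\omega_R)$, so that $a(S)=-\min\{n:[\omega_S]_n\neq 0\}$ reads off correctly from $\Hom_R(S,\omega_R)$. The conductor identification and the degree bookkeeping are then routine.
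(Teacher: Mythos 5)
Your proof is correct, and it reaches the strict inequality by a genuinely different mechanism than the paper's. Both arguments start from the same place: $\omega_S\cong\Hom_R(S,\omega_R)$ together with $\omega_R\cong R(a(R))$ embeds $\omega_S$ into a shift of $R$, giving $a(S)\leq a(R)$. The paper then rules out equality by contradiction: if $a(S)=a(R)$ then, $\omega_R$ being cyclic, the embedding forces $\omega_S=\omega_R$, and localizing $0\to R\to S\to C\to 0$ at a height-one prime and applying $\Hom(-,\omega_{R_\pp})$ plus Matlis duality of the finite-length module $C_\pp$ shows $C_\pp=0$, contradicting the codimension-one hypothesis. You instead compute the image of the embedding exactly, identifying $\Hom_R(S,R)$ with the conductor $\mf{c}=(R:_Q S)$, so that $a(S)=a(R)-c_0$ with $c_0$ the initial degree of $\mf{c}$; strictness then reduces to the elementary fact that a proper nonzero graded ideal of a positively graded algebra with $R_0=k$ has initial degree at least one, and $\mf{c}$ is proper precisely because $S\neq R$. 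Your route buys a sharper, quantitative statement and needs only $S\neq R$ rather than the full codimension-one hypothesis (under the Gorenstein, hence $(\Se_2)$, assumption the two are equivalent anyway), and it avoids the localization-and-duality step; the paper's route avoids having to pin down $\Hom_R(S,R)$ as the conductor. Note that both proofs implicitly use that $R$ is positively graded with $R_0$ a field --- you when declaring a nonzero degree-zero element of $\mf{c}$ a unit, the paper when deducing $\omega_S=\omega_R$ from equality of initial degrees --- which is the setting of the surrounding discussion though not spelled out in the statement.
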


\begin{proof}

Consider the exact sequence
\begin{equation} \label{eq:a-exact}
0 \lra R \lra S \lra C \lra 0,
\end{equation}
where $C$ is the cokernel of $R \lra S$. Then since $S$ is birational
over $R$, $C$ is torsion. Apply $\Hom_R(-, \omega_R)$.  Thus
$\Hom_R(C, \omega_R)=0$.  It follows that $\omega_S\cong \Hom_R(S,
\omega_R)$ embeds into $\Hom_R(R, \omega_R)=\omega_R$. Since the
$a$-invariant is equal to minus the smallest degree of the graded
canonical module, it follows that $a(S) \leq a(R)$. But $\omega_R$ is
1-generated, so if equality holds, then $\omega_S=\omega_R$. We show that this is not possible:
Let $\mf{p}$ be a prime of height 1 in $R$. Now apply 
$\Hom_{R_\mf{p}}(-,\omega_{R_\mf{p}})$ to sequence (\ref{eq:a-exact}). Then we get a short exact sequence
$$ 0 \lra \omega_{S_\mf{p}} \lra \omega_{R_\mf{p}} \lra \Ext^1(C_\mf{p}, \omega_{R_\mf{p}}) \lra 0$$
since $\Ext^1_R(S,\omega_R)_\mf{p}=0$.  But the first map is an isomorphism, so $\Ext^1(C_\mf{p}, \omega_{R_\mf{p}})$ is 0.
As $C_\mf{p}$ is torsion and $R_\mf{p}$ is one dimensional, $C_\mf{p}$ is a module of finite length over $R_\mf{p}$. As such $\Ext^1(C_\mf{p}, \omega_{R_\mf{p}})$ is Matlis dual to $C_\mf{p}$ itself by local duality, so it is zero if and only if $C_\mf{p}$ is zero.
Therefore $R \lra S$ is an isomorphism in codimension 1.
\end{proof}

\begin{Cor}  \label{Cor:normalizationrational}
Let $R$ be a graded nonnormal Gorenstein ring with $a(R)=0$ and let $\widetilde R$ be its normalization. If $\widetilde R$ has an isolated singularity (e.g., if $\dim R=2$), then the singularity is rational.
\end{Cor}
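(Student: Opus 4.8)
The plan is to derive the strict $a$-invariant inequality $a(\widetilde R) < a(R) = 0$ from Lemma~\ref{Lem:ainvariant} and then feed $a(\widetilde R) < 0$ into Watanabe's rationality criterion, applied to the normalization $\widetilde R$ rather than to $R$ itself.

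The first task is to check that the normalization map $R \to \widetilde R$ satisfies the hypotheses of Lemma~\ref{Lem:ainvariant}: it must be a birational, graded, integral extension that is \emph{not} an isomorphism in codimension one. Integrality and birationality are intrinsic to normalization (taking $R$ reduced, so that $\widetilde R$ is the integral closure of $R$ in its total quotient ring), and the normalization of a graded ring is again graded. The one point requiring argument is the failure of an isomorphism in codimension one, and this is where I would use the Gorenstein hypothesis: since $R$ is Gorenstein it is Cohen--Macaulay, hence satisfies $(\Se_2)$, so Serre's normality criterion forces the nonnormal ring $R$ to fail $(R_1)$. Thus there is a height-one prime $\mf{p}$ at which $R_\mf{p}$ is reduced, one-dimensional and not regular, hence not a discrete valuation ring and so not normal; as normalization commutes with localization, $(\widetilde R)_\mf{p} \supsetneq R_\mf{p}$, and the extension is genuinely nontrivial in codimension one.

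With this verified, Lemma~\ref{Lem:ainvariant} gives $a(\widetilde R) < a(R) = 0$. Since $\widetilde R$ is normal and graded and, by hypothesis, has an isolated singularity, Watanabe's criterion then yields that $\widetilde R$ has a rational singularity. For the parenthetical case $\dim R = 2$, no separate isolated-singularity hypothesis is needed: $\widetilde R$ is then two-dimensional and normal, hence $(R_1)$, so its non-regular locus has codimension at least two and is therefore isolated. When $R$ is reduced but not a domain one reduces to the domain case: $\widetilde R = \prod_i \widetilde R_i$ is a finite product of normal graded domains with $a(\widetilde R) = \max_i a(\widetilde R_i)$, so $a(\widetilde R) < 0$ forces each $\widetilde R_i$ to be rational.

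The main obstacle is not the $a$-invariant bookkeeping but the interface with the two quoted results. First, the strictness in Lemma~\ref{Lem:ainvariant} is essential, since a non-strict inequality would only give $a(\widetilde R) \le 0$, which is insufficient for Watanabe; hence the codimension-one argument above must be made airtight. Second, the version of Watanabe's criterion recalled before Example~\ref{Ex:freediv} is stated for \emph{standard} graded rings, yet $\widetilde R$ need not be standard graded even when $R$ is; I would therefore invoke the criterion in its graded form for an arbitrary positively graded normal ring with isolated singularity (rational if and only if $a < 0$), which is the form actually needed here.
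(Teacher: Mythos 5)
Your proof is correct and follows exactly the route the paper intends: the corollary is stated without a written proof immediately after Lemma~\ref{Lem:ainvariant} and the recollection of Watanabe's criterion, and your argument (strict inequality $a(\widetilde R)<a(R)=0$ from the lemma, then Watanabe applied to $\widetilde R$) is precisely that deduction. Your verification that the normalization fails to be an isomorphism in codimension one --- via $(\Se_2)$ for the Gorenstein ring $R$ and Serre's normality criterion --- correctly supplies the one detail the paper leaves implicit.
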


\begin{Cor}
Let $R=\C[[x,y,z]]/(f)$, where $f$ is irreducible homogeneous of degree $3$. Then $R$ has a NCR if and only if it is nonnormal.
\end{Cor}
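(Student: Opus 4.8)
The plan is to apply Theorem~\ref{Thm:NCRdim2}, which reduces the existence of a NCR to a statement about rational singularities of the normalization, and then to read off the answer from the $a$-invariant, handling the normal and nonnormal cases separately. First I would record the basic structure of $R$. Since $f$ is irreducible, $R$ is a two-dimensional reduced ring (indeed a domain), and being complete local it is Henselian; as a hypersurface it is Gorenstein. Because $f$ is homogeneous of degree $3$ we have $f \in \mm^3$, so the closed point is always a singular point of $\Spec R$. Theorem~\ref{Thm:NCRdim2} then reduces the claim to the equivalence
$$\widetilde R \text{ has only rational singularities} \iff R \text{ is nonnormal.}$$
Throughout I would pass freely between $R$ and the associated graded ring $\C[x,y,z]/(f)$, of which $R$ is the completion at the irrelevant maximal ideal; normality, rationality of singularities, and the $a$-invariant are all unchanged by this completion, so the graded statements below apply. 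As a hypersurface of degree $3$ in three variables, $R$ has Hilbert series $\tfrac{1-t^3}{(1-t)^3}$, whose degree as a rational function is $3-3=0$, so $a(R)=0$ in both cases.

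Next I would characterize normality geometrically. Writing $C=\{f=0\}\subseteq\Proj^2$ for the plane cubic, the singular locus of $\Spec R$ is cut out by the degree-two partial derivatives of $f$; by the Euler relation $3f=x\,\partial_x f+y\,\partial_y f+z\,\partial_z f$ these automatically force $f=0$ as well. If $C$ is smooth, the partials vanish only at the origin, so $R$ has an isolated singularity and is therefore regular in codimension one; being a Cohen--Macaulay hypersurface it is then normal. If $C$ is singular, the cone over a singular point of $C$ is a one-dimensional locus of singularities, so $R$ fails $(R_1)$ and is nonnormal. Thus $R$ is normal precisely when $C$ is a smooth cubic, in which case the singularity is isolated.

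I would then treat the two cases. If $R$ is normal, then $\widetilde R=R$ is standard graded with an isolated singularity, so Watanabe's criterion~\cite[Theorem~2.2]{Watanabe81} applies: since $a(R)=0\not<0$, the singularity is \emph{not} rational (this is precisely the classical simple-elliptic cone over a smooth plane cubic). By the equivalence above, $R$ has no NCR. If instead $R$ is nonnormal, then $R$ is a graded nonnormal Gorenstein ring with $a(R)=0$, and since $\dim R=2$ its normalization $\widetilde R$ is two-dimensional and normal, hence has an isolated singularity. Corollary~\ref{Cor:normalizationrational} then gives that this singularity is rational, and Theorem~\ref{Thm:NCRdim2} yields a NCR of $R$.

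The genuine content of the argument is the invocation of the two rationality criteria, so the main thing to get right is that their hypotheses are verified in each case: in the normal case one needs the singularity to be isolated and standard graded to apply Watanabe in the direction $a(R)\geq 0 \Rightarrow$ non-rational, and in the nonnormal case one needs the isolated-singularity hypothesis of Corollary~\ref{Cor:normalizationrational}, which is exactly what $\dim R=2$ supplies. The only point requiring a little care beyond this is the transfer between the graded ring and its completion $R$; everything else (the computation $a(R)=0$ and the normality-versus-smoothness dichotomy for the cubic) is routine.
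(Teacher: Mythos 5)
Your proof is correct and follows essentially the same route as the paper: the nonnormal case rests on Corollary~\ref{Cor:normalizationrational} applied with $a(R)=0$, and the normal case on the non-rationality of the cone over a smooth plane cubic. The only cosmetic difference is that you funnel both directions through Theorem~\ref{Thm:NCRdim2}, while the paper cites the underlying results (Leuschke's lemma for pushing a NCR of $\widetilde R$ down to $R$, and the elliptic-cone example of Dao--Iyama--Takahashi--Vial) directly.
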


\begin{proof} 
If $R$ is not normal, then by the previous corollary, $\widetilde R$ is rational and therefore has a NCR, which becomes a NCR over $R$, see \cite[Lemma 1]{Leuschke07}. If $R$ is normal, then it is the cone over an elliptic curve, therefore it does not have a NCR, see example 3.4 of \cite{DaoIyamaTakahashiVial}.
\end{proof}

\section{The global spectrum} \label{Sec:glspec}

So far we have considered NCRs and NCCRs of the form $\End_RM$. One definition of NCCRs involve the endomorphism ring being homologically homogenous, meaning $\gl \End_{R_\pp}(M_\pp) =\dim R_\pp$ for all $\pp \in \Spec R$. Thus, an understanding of possible global dimensions is important. Hence, taking a more general point of view, we introduce the \emph{global spectrum of a singularity.}

\begin{defi}
Let $R$ be a commutative Cohen-Macaulay  ring. We define the {\it global spectrum} of $R$, $\gs(R)$, to be the range of all {\it finite} $\gl \End_R(M)$ where $M$ is a $\CM$-module over $R$. 
\end{defi}

\begin{Bem}
Here, as throughout the paper, we do not require $\CM$ modules to
have full support.  One can
define different spectra by changing the category $\CM$ in the
definition to 
other subcategories of $R$-modules.
\end{Bem}

This concept is somewhat related to the concept of {\it representation dimension} by Auslander, which is the infimum of $\gl \End_R(M)$ where $R$ is an Artinian algebra and $M$ is a generator-cogenerator. 
For higher dimensional commutative complete CM local rings $R$ with canonical module $\omega_R$ this notion has been generalized as follows.  Assume that $M$ is a generator-cogenerator for the category $\CM(R)$, that is, $R$ and $\omega_R$ are in $\add M$. Then the representation dimension of $R$ is the infimum over such $\gl \End_R(M)$, see \cite{Leuschke07, IyamaZeta}. \\ 
So the main question to consider is the following:

\begin{Qu}
What is the global spectrum of a  ring $R$? 
\end{Qu}

Computation of the global spectrum appears to be subtle even in the case $R$ when is Artinian or has finite $\CM$-type, see Theorem \ref{Thm:gsArtin} and Prop.~\ref{Prop:gsnode}. We start here with a study of possible global dimension for endomorphism rings for curves and obtain numbers contained in the global spectrum: for 1-dimensional reduced rings $R$ the normalization is an endomorphism ring of finite global dimension, and thus $1$ is always contained in $\gs(R)$. Other particular cases are rings of finite $\CM$-type (Prop.~\ref{Prop:globaldimfiniteCM}, Thm.~\ref{Thm:gssimplesurface}) and one-dimensional rings with cluster-tilting objects, see Prop.~\ref{Hyper1}.

\begin{Prop} \label{Prop:globaldimfiniteCM}
Let $(R, \mf{m})$ be a one-dimensional reduced Henselian local ring of finite $\CM$-type, which is not regular. Let $M$ be the direct sum of all indecomposable $\CM$-modules of $R$. Then the ring $\End_R(M)$ has global dimension 2.
\end{Prop}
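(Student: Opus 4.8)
The plan is to realize $\Lambda := \End_R(M)$ as the analogue of an Auslander algebra for the category $\CM(R)$ and to bound the projective dimensions of its simple modules. Since $R$ is Henselian local, $\Lambda$ is semiperfect, and because $M$ is an additive generator of $\CM(R)$ the functor $\Hom_R(M,-)\colon \add M \to \proj\Lambda$ is an equivalence; hence the indecomposable projective (right) $\Lambda$-modules are the $P_N := \Hom_R(M,N)$ for $N$ running over the indecomposable $\CM$-modules, and the simple $\Lambda$-modules are exactly the tops $S_N := P_N/\rad P_N$. Thus $\gldim\Lambda = \sup_N \pd_\Lambda S_N$, and it suffices to resolve each $S_N$. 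I would first record that a one-dimensional reduced Henselian local ring is an isolated singularity, so by Auslander's theorem $\CM(R)$ admits almost split sequences; this is the tool that makes the resolutions short.

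For the upper bound $\gldim\Lambda\le 2$ I would treat the free and non-free indecomposables separately. If $N=R$, then $\mm$ is torsion-free, hence $\CM$, hence lies in $\add M$, and a map $M\to R$ factors through the inclusion $\mm\hookrightarrow R$ precisely when it is not surjective, i.e.\ exactly when it lies in $\rad P_R$. Applying the left-exact functor $\Hom_R(M,-)$ therefore yields the projective resolution $0\to\Hom_R(M,\mm)\to\Hom_R(M,R)\to S_R\to 0$, so $\pd_\Lambda S_R\le 1$. If $N$ is non-free indecomposable, I would take the almost split sequence $0\to\tau N\to E\to N\to 0$; all three terms are $\CM$ and hence in $\add M$, and the right-almost-split property says the image of $\Hom_R(M,E)\to\Hom_R(M,N)$ is exactly $\rad P_N$. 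Left exactness of $\Hom_R(M,-)$ then gives the projective resolution $0\to\Hom_R(M,\tau N)\to\Hom_R(M,E)\to\Hom_R(M,N)\to S_N\to 0$, whence $\pd_\Lambda S_N\le 2$. Taking the supremum gives $\gldim\Lambda\le 2$.

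For the lower bound I would rule out both $0$ and $1$. Global dimension $0$ would force $\Lambda$ semisimple, which is impossible since $\Lambda$ is module-finite over the one-dimensional ring $R$ and therefore has Krull dimension $1$. For global dimension $1$: since $R$ is a direct summand of $M$, the module $M$ is a generator, and we have just shown $\gldim\Lambda<\infty$, so $M$ gives a generator NCR of $R$. Were $\gldim\Lambda=1$, Proposition~\ref{Prop:NCRglobaldim1} would force $R$ to be regular, contradicting the hypothesis. Hence $\gldim\Lambda\ge 2$, and combined with the upper bound we conclude $\gldim\End_R(M)=2$.

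The main obstacle is the upper bound, and within it the clean identification of $\rad P_N$ with the morphisms factoring through the almost split sequence (respectively through $\mm\hookrightarrow R$); this is precisely where the isolated-singularity hypothesis does the essential work, since it guarantees almost split sequences whose kernel $\tau N$ is again $\CM$ and so contributes a \emph{projective} last term. The remaining points---semiperfectness of $\Lambda$ and the dictionary between simple $\Lambda$-modules and indecomposable summands of $M$---are formal once $R$ is assumed Henselian local, and I would only verify them briefly.
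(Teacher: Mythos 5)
Your proof is correct, but it takes a different route from the paper: the paper's own proof is essentially a citation, deducing the statement from Iyama's result on Auslander triples \cite{Iyama07} and pointing to \cite{Leuschke12} for a written-out direct argument, whereas you actually supply that direct argument. Your upper bound is the classical Auslander-algebra computation: under the equivalence $\Hom_R(M,-)\colon\add M\to\proj\End_R(M)$ the simple at the free vertex is resolved by $0\to\Hom_R(M,\mm)\to\Hom_R(M,R)\to S_R\to 0$ (your identification of $\rad P_R$ with the non-surjective maps is right, since no map from a non-free indecomposable to $R$ can be onto), and the simple at a non-free vertex is resolved in two steps by the almost split sequence, the key point being that in dimension one the kernel of the right almost split map is torsion-free, hence $\CM$, hence projective after applying $\Hom_R(M,-)$. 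Two small remarks. First, you could weaken your appeal to Auslander's existence theorem for almost split sequences: since $\End_R(M)$ is noetherian and semiperfect, $\rad P_N$ is finitely generated and its projective cover already produces a minimal right almost split map $E\to N$ in $\add M=\CM(R)$, and its kernel is automatically $\CM$ in dimension one; so no canonical-module or completeness hypotheses are needed beyond what is stated. Second, your lower bound via Proposition~\ref{Prop:NCRglobaldim1} is legitimate and non-circular (that proposition rests on Propositions~\ref{Prop:generatorNCCR} and \ref{Prop:centreNCCR}, not on the present statement); the more traditional alternative is to note that a non-split almost split sequence ending at a non-free $N$ forces $\pd S_N=2$ directly. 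What your approach buys is a self-contained, elementary proof inside the paper; what the paper's citation buys is brevity and the uniform statement for higher-dimensional Auslander triples.
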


\begin{proof}
The result follows from \cite[Prop. 4.3.1]{Iyama07} (take the Auslander triple $(R, M, T)$, where $T$ is some cotilting module. In our case we have $(d,m,n)=(1,1,1)$ and the triple is not trivial, so $\gl (\End_R(M)) \geq n+1=2$).
A direct proof is also written out in \cite[Theorem P.2]{Leuschke12}.
\end{proof}

The following result appeared in \cite{IyamaWemyss10a}, however the version stated there did not specify the assumptions on $R$. 

\begin{Prop}\label{IW}
Let $R$ be a Cohen Macaulay  local  ring which is Gorenstein in codimension one  (i.e., $R_\mf{p}$ is Gorenstein for any prime ideal $\mf{p}$ of height at most one) and assume $\dim R\leq 2$. Let $M \in \CM(R)$ be a generator with $A=\End_R(M)$. For $n\geq 0$, the following are equivalent:
\begin{enumerate}[leftmargin=*]
\item $\gl A\leq n+2$.
\item For any $X\in \CM(R)$, there exists an exact sequence 
$$0 \to  M_n \to \cdots \to M_0 \to X\to 0$$
such that $M_i \in \add(M)$ and the induced sequence:
$$0 \to  \Hom(M,M_n)  \to \cdots \to \Hom(M,M_0)  \to\Hom(M,X) \to 0$$
is exact.
\end{enumerate}
\end{Prop}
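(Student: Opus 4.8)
The plan is to prove the equivalence by relating the global dimension of $A = \End_R(M)$ to the existence of functorially exact $\add M$-resolutions, using the fact that $M$ is a generator so that $R \in \add M$ and the functor $\Hom_R(M,-)$ behaves well. The central technical tool is the observation, valid because $M$ is a generator, that $\Hom_R(M,-)$ induces an equivalence between $\add M$ and $\proj A$ (as used already in Lemma~\ref{Lem:generatornccr} via \cite[Lemma 2.5]{IyamaWemyss10}). Under this equivalence, an exact sequence in $\CM(R)$ whose image under $\Hom_R(M,-)$ stays exact is precisely a projective resolution over $A$ of the $A$-module $\Hom_R(M,X)$. So the two conditions should both be reformulated in terms of projective resolutions of $A$-modules.

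First I would reduce condition (1) to a statement about simple $A$-modules: since $A$ is a module-finite $R$-algebra over a local ring, Corollary~\ref{finsimp} tells us there are only finitely many simples, and $\gl A$ is the supremum of their projective dimensions. The hypotheses that $R$ is Cohen--Macaulay, Gorenstein in codimension one, and $\dim R \leq 2$ are exactly what is needed to guarantee that $\Hom_R(M,X) \in \CM(R)$-lands-in-$A$ has good depth properties: for $X \in \CM(R)$ one has $\Hom_R(M,X) \in \CM(R)$ when $\dim R \le 2$ (the same depth estimate used in Lemma~\ref{Lem:generatornccr}). The shift by $2$ in ``$\gl A \le n+2$'' reflects that $A$-modules of the form $\Hom_R(M,X)$ for $X \in \CM(R)$ are the maximal Cohen--Macaulay $A$-modules, which sit in homological degree $d = \dim R \le 2$ below the simples; thus controlling resolutions of length $n$ among $\CM(R)$-objects corresponds to controlling $\gl A$ up to $n+2$.

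The key steps, in order, would be: (i) translate condition (2) into the statement that every $A$-module of the form $\Hom_R(M,X)$, $X \in \CM(R)$, admits a projective $A$-resolution of length $\le n$ — this is immediate from the equivalence $\add M \simeq \proj A$ together with the functorial exactness hypothesis; (ii) identify the $A$-modules $\Hom_R(M,X)$ with the MCM $A$-modules, using that $A$ is MCM over $R$ (it is $\Hom_R(M,M)$ with $M$ a generator and $\dim R \le 2$) and the Gorenstein-in-codimension-one hypothesis to ensure the functor $\Hom_R(M,-)$ gives a bijection between $\CM(R)$ and MCM $A$-modules; (iii) invoke a depth/Auslander--Buchsbaum-type computation over $A$ showing $\gl A \le n+2$ iff every MCM $A$-module has projective dimension $\le n$, where the ``$+2$'' comes from $\dim R = 2$ (the non-MCM simples pick up at most $d$ extra steps in a resolution). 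Combining (i)--(iii) yields the equivalence.

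I expect the main obstacle to be step (iii): carefully justifying the precise numerical relationship $\gl A \le n+2 \Leftrightarrow$ (every MCM $A$-module has $\pd_A \le n$), and in particular handling the possibly-non-equicodimensional local behavior and the role of the Gorenstein-in-codimension-one hypothesis when $\dim R < 2$. One must verify that the simple $A$-modules, which are supported at the closed point and hence have $\depth 0$, have projective dimension exactly bounded by $n + \dim R$, and that this bound is achieved/detected by the MCM modules. The Gorenstein-in-codimension-one condition is what makes $\Hom_R(M,-)$ and the duality compatible in codimension one, ensuring the resolutions of MCM $A$-modules glue correctly with those of the simples; pinning down this interplay, rather than any single calculation, is the delicate part. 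Since this proposition is attributed to \cite{IyamaWemyss10a} with the remark that the assumptions on $R$ were not previously specified, the real content of the proof is precisely tracking exactly where these hypotheses on $R$ are used.
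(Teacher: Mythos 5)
Your architecture---translate condition (2) through the equivalence $\add M\simeq\proj A$ into a statement about projective $A$-resolutions of the modules $\Hom_R(M,X)$, and then characterize which $A$-modules arise this way---has the right shape, but your step (iii) rests on the wrong mechanism, and that is a genuine gap. The shift ``$+2$'' is not $\dim R$: the proposition is asserted for all $\dim R\le 2$, so it must hold verbatim when $\dim R$ is $0$ or $1$, where the shift is still $2$. The correct source of the $2$ is the elementary reduction $\gl A\le n+2$ if and only if $\pd_A\Omega_A^2Y\le n$ for every $Y\in\mmod A$, combined with the identification of second syzygies in $\mmod A$ with the modules $\Hom_R(M,X)$ for $X$ a second syzygy in $\mmod R$ (this uses only left exactness of $\Hom_R(M,-)$, fullness on $\add M$, and $R\in\add M$). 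The hypotheses on $R$ enter at exactly one point: Cohen--Macaulay, Gorenstein in codimension one, and $\dim R\le 2$ force, by Theorem 3.6 of \cite{EvansGriffith}, that the second syzygies in $\mmod R$ are precisely $\CM(R)$. That is the paper's entire proof: the argument is verbatim \cite[Prop.~2.11]{IyamaWemyss10a}, plus the observation of exactly where these hypotheses are used.

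By contrast, an Auslander--Buchsbaum formula $\pd_AX+\depth_RX=\depth R$ is not available for a general $R$-order $A$---the paper only invokes it in Theorem~\ref{Thm:gssimplesurface} after first proving that $A$ is a \emph{Gorenstein} order---and if it were available here it would prove far too much: every $A$-module of finite projective dimension would satisfy $\pd_AX\le\depth R\le 2$, so $\gl A\le 2$ whenever finite, contradicting Corollary~\ref{Hyper1}, where a generator over a two-dimensional reduced hypersurface yields $\gl A=3$. Relatedly, your step (ii) aims at the wrong class: what must be matched with $\Hom_R(M,\CM(R))$ is the class of second syzygies over $A$, not the $A$-modules that are MCM over $R$ (one containment of the latter is easy, the other is neither needed nor free). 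The delicate point you flagged is real, but it is resolved by Evans--Griffith applied over $R$, not by a depth computation over $A$.
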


\begin{proof}
The proof is verbatim to that of \cite[Prop 2.11]{IyamaWemyss10a}. The
place where our assumption on $R$ is used is the fact that $ \CM(R)$
coincides with the category of second syzygies in $\mmod R$, see Thm. 3.6 of \cite{EvansGriffith} and an
$A$-module is a second syzygy if and only if it has the form $\Hom_R(M,X)$, where $X$ is a second syzygy in $\mmod R$. 
\end{proof}

Recall that an MCM module $M$ is called cluster-tilting if 
\begin{align*} \add(M) & = \{X\in \CM(R)  \,|\, \Ext_R^1(X,M)=0\} \\
 & = \{X\in \CM(R) \,|\, \Ext_R^1(M,X)=0\}. 
 \end{align*}
Here we give the precise value for the global dimension of  endomorphism rings of cluster-tilting modules. 

\begin{Lem}
Let $R$ be a non-regular Cohen Macaulay local  ring which is Gorenstein in codimension one and assume $\dim R\leq 2$.  Let $M \in \CM(R)$ be a cluster-tilting object and $A=\End_R(M)$. Then $\gl A=3$.
\end{Lem}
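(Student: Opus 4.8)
The plan is to sandwich $\gl A$ between $3$ and $3$ using Proposition~\ref{IW}, whose hypotheses apply because a cluster-tilting object is automatically a generator: since $R\in\CM(R)$ and $\Ext^1_R(R,M)=0$, the first description $\add M=\{X\in\CM(R):\Ext^1_R(X,M)=0\}$ forces $R\in\add M$. So $M$ is a generating object in $\CM(R)$ and Proposition~\ref{IW} is available for every $n\geq 0$.

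For the upper bound $\gl A\leq 3$ I would verify condition (2) of Proposition~\ref{IW} with $n=1$. Fix $X\in\CM(R)$ and pick generators $f_1,\dots,f_t$ of the $A$-module $\Hom_R(M,X)$; they assemble into a right $\add M$-approximation $f\colon M_0:=M^{\oplus t}\to X$, which is surjective because $R$ is a summand of $M$, so every element of $X$ (viewed as a map $R\to X$) factors through $f$. Let $K=\ker f$. The depth lemma applied to $0\to K\to M_0\to X\to 0$ gives $\depth K=\dim R$, hence $K\in\CM(R)$. Applying $\Hom_R(M,-)$ and combining surjectivity of $\Hom_R(M,M_0)\to\Hom_R(M,X)$ (the approximation property) with $\Ext^1_R(M,M_0)=0$ (rigidity, as $M_0\in\add M$) yields $\Ext^1_R(M,K)=0$; the second description $\add M=\{X\in\CM(R):\Ext^1_R(M,X)=0\}$ then gives $K\in\add M$. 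Thus $0\to K\to M_0\to X\to 0$ is an $\add M$-resolution that stays exact under $\Hom_R(M,-)$, which is exactly condition (2) for $n=1$, so $\gl A\leq 3$.

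For the lower bound I would rule out $\gl A\leq 2$. By Proposition~\ref{IW} with $n=0$, the inequality $\gl A\leq 2$ is equivalent to every $X\in\CM(R)$ being isomorphic to an object of $\add M$, i.e.\ to $\CM(R)=\add M$. If that held, then since every MCM module lies in $\add M$ in \emph{both} variables, the cluster-tilting condition would force $\Ext^1_R(Y,X)=0$ for all $X,Y\in\CM(R)$. But $R$ is not regular, so with $d=\dim R$ the syzygy $\Omega^d k$ is an MCM module whose minimal free cover $0\to \Omega^{d+1}k\to F\to \Omega^d k\to 0$ is a non-split extension of MCM modules: a splitting would exhibit $\Omega^d k$ as a free summand of $F$, whence $\pd_R k\leq d$ and $R$ would be regular. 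Therefore $\Ext^1_R(\Omega^d k,\Omega^{d+1}k)\neq 0$, contradicting the vanishing. Hence $\CM(R)\neq\add M$, so $\gl A\geq 3$, and together with the upper bound $\gl A=3$.

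I expect the delicate step to be the upper bound's claim that the kernel $K$ actually lands in $\add M$: this is precisely where the full cluster-tilting hypothesis (beyond mere rigidity or being a generator) is used, through the computation $\Ext^1_R(M,K)=0$ and the characterization $\add M=\{X\in\CM(R):\Ext^1_R(M,X)=0\}$. The lower bound, by contrast, is essentially a clean application of Proposition~\ref{IW} for $n=0$ together with the elementary observation that a non-regular local ring always carries a non-split self-extension inside $\CM(R)$.
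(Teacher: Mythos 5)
Your proof is correct and follows essentially the same route as the paper's: both establish $\gl A\leq 3$ by producing, for each $X\in\CM(R)$, a short exact sequence $0\to M_1\to M_0\to X\to 0$ with terms in $\add M$ that stays exact under $\Hom_R(M,-)$ (you write out the approximation construction that the paper cites from Construction~3.5 of \cite{DaoHuneke}), and both rule out $\gl A\leq 2$ via Proposition~\ref{IW} with $n=0$ by exhibiting a non-split extension between maximal Cohen--Macaulay modules. The only cosmetic difference is that the paper takes an arbitrary non-projective $X\in\CM(R)$ and the non-split sequence $0\to\Omega X\to F\to X\to 0$, whereas you specialize to $X=\Omega^d k$; the argument is otherwise identical.
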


\begin{proof}
By the definition of cluster-tilting objects, $M$ is a
generator. Thus, one can apply Proposition \ref{IW}. To show that $\gl
A\leq 3$, we check condition (2). Take any $X\in \CM(R)$,  we take the left
  approximation of $X$ by $\add M$.  By 
Construction 3.5 in \cite{DaoHuneke}, we get a sequence $0 \to M_1\to M_0 \to X\to 0$ such that $M_0\in \add(M)$ and the induced sequence $0 \to  \Hom(M,M_1)  \to \Hom(M,M_0)  \to\Hom(M,X) \to 0$ is exact. This means $\Ext^1(M,M_1)=0$, so $M_1\in \add(M)$. 

Now, we need to rule out the case $\gl A\leq 2$. Assume so, then Proposition \ref{IW} applies again to show that $\CM(R) = \add(M)$. Since $R$ is not regular, we can pick $X \in \CM(R)$ which is not projective. Let $\Omega X$ be the first syzygy of $X$, obviously  $X$ and $\Omega X$ are in $\CM(R)$, which is $\add(M)$. As $M$ is cluster tilting, we must have  $\Ext^1_R(X, \Omega X)=0$. But the sequence $0 \to \Omega X \to F \to X \to 0$ is not split, so $\Ext^1_R(X, \Omega X)$ is not zero,  contradiction. 
\end{proof}

\begin{Cor}\label{Hyper1} 

Let $(S,\mm)$ be a  complete regular local ring of dimension $2$. Let $R=S/(f)$ be a reduced hypersurface, and  assume that $f= f_1\cdots f_n$ is a factorization of $f$ into prime elements with $f_i \notin \mm^2$ for each $i$. Let $S_i=S/(\prod_{j=1}^i f_{j})$ be the partial normalizations of $R$ and set $T=\bigoplus_{i=1}^n S_i$. Then $\gl \End_R(T) = 3$.     
\end{Cor}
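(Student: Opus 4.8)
The plan is to deduce the statement from the preceding Lemma, which gives $\gl\End_R(M)=3$ whenever $R$ is a non-regular Cohen--Macaulay local ring that is Gorenstein in codimension one, has $\dim R\le2$, and $M\in\CM(R)$ is a cluster-tilting object. Accordingly the proof breaks into two tasks: checking the hypotheses on $R$, and showing that $T$ is a cluster-tilting object of $\CM(R)$. Throughout I assume $n\ge2$; for $n=1$ the ring $R=S/(f_1)$ is regular, $T=R$, and $\gl\End_R(T)=1$, so this case is genuinely excluded.

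First I would dispose of the hypotheses on $R$. Since $\dim S=2$ we have $\dim R=1\le2$; as $R=S/(f)$ is a hypersurface it is Gorenstein, hence Cohen--Macaulay, and being one-dimensional it is Gorenstein in codimension one. Because $n\ge2$, the element $f=f_1\cdots f_n$ lies in $\mm^2$, so $R$ is non-regular. To place $T$ in $\CM(R)$, write $g_i=f_1\cdots f_i$ and $h_i=f_{i+1}\cdots f_n$, so $f=g_ih_i$ and $S_i=R/g_iR$ is a reduced quotient supported on the first $i$ branches. A nonzerodivisor of $R$ avoids all the minimal primes of $R$, hence those of $S_i$, so it acts injectively on $S_i$; thus $S_i$ is torsion-free and therefore maximal Cohen--Macaulay over the one-dimensional reduced ring $R$. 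Since $S_n=R$ is a summand, $T$ is moreover a generator.

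The heart of the matter is that $T$ is cluster-tilting, i.e.\ $\add T=\{X\in\CM(R):\Ext^1_R(T,X)=0\}=\{X\in\CM(R):\Ext^1_R(X,T)=0\}$. Rigidity $\Ext^1_R(T,T)=0$ I would obtain from matrix factorizations: over the hypersurface $R$ the module $S_i$ is the cokernel of the rank-one matrix factorization $(g_i,h_i)$ of $f$, hence admits the $2$-periodic resolution $\cdots\xrightarrow{g_i}R\xrightarrow{h_i}R\xrightarrow{g_i}R\to S_i\to0$. Applying $\Hom_R(-,S_j)$ yields $\Ext^1_R(S_i,S_j)\cong\ann_{S_j}(h_i)/g_iS_j$, and a short case analysis ($i\ge j$, where $h_i$ is a nonzerodivisor on $S_j$, and $i<j$, where $\ann_{S_j}(h_i)=g_iS_j$) shows this vanishes in every case. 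Since $\Ext^1_R(S_i,S_j)=0$ for all ordered pairs $(i,j)$, both containments $\add T\subseteq\{X:\Ext^1(T,X)=0\}$ and $\add T\subseteq\{X:\Ext^1(X,T)=0\}$ follow at once.

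The main obstacle is the reverse (maximality) inclusion: if $X\in\CM(R)$ and $\Ext^1_R(T,X)=0$, then $X\in\add T$. The same periodic resolutions give $\Ext^1_R(S_i,X)\cong\ann_X(h_i)/g_iX$, so the hypothesis says exactly that $\ann_X(h_i)=g_iX$ for every $i$, producing a filtration $X=g_0X\supseteq g_1X\supseteq\cdots\supseteq g_nX=0$ in which $g_iX$ is the $h_i$-torsion of $X$. I would then view $X$ as an $R$-lattice in $X\otimes_R\widetilde R=\bigoplus_{j=1}^n A_j^{\,r_j}$, where $\widetilde R=\prod_{j=1}^n A_j$ is the normalization and each $A_j=S/(f_j)$ is a discrete valuation ring; the equalities $\ann_X(h_i)=g_iX$ rigidly constrain the gluing data of this lattice at the closed point, and an induction on the number of branches $n$ (peeling off one branch and passing to the curve with one fewer branch) should force $X$ to split as a direct sum of the $S_i$. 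Promoting the $g_\bullet$-filtration to an actual direct-sum decomposition is where the real work lies; alternatively one may invoke the classification of cluster-tilting objects for reduced one-dimensional hypersurface singularities in \cite{BurbanIyamaKellerReiten}. The second defining equality for cluster-tilting is handled analogously. Granting that $T$ is cluster-tilting, the preceding Lemma applies and gives $\gl\End_R(T)=3$.
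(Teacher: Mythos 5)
Your proposal is correct and follows essentially the same route as the paper: the paper's entire proof consists of citing Burban--Iyama--Keller--Reiten (Theorem 4.1, for $S=k[[x,y]]$ and $k$ infinite) or Dao--Huneke (Theorem 4.7) for the fact that $T$ is a cluster-tilting object, and then applying the preceding Lemma. Your hands-on verification of rigidity via the matrix factorizations $(g_i,h_i)$ is sound and a nice supplement, but the maximality half of the cluster-tilting property --- which you rightly flag as the real work and leave to an unspecified induction on branches --- is ultimately supplied by the very same citation you offer as a fallback, so your argument closes exactly the way the paper's does.
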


\begin{proof}
By \cite[Theorem 4.1]{BurbanIyamaKellerReiten} (the case $S=k[[x,y]]$ and $k$ infinite) or \cite[Theorem 4.7]{DaoHuneke} we know that $S$ is cluster-tilting, so the previous Lemma applies. 
\end{proof}

\begin{Qu}
Let $R$ be as in \ref{Hyper1}. Is  it true that  $\gs(R) = \{1,2,3\}$? 
\end{Qu}

\subsection{Some computations of global spectra}

Here we give a few examples of global spectra: we compute the global
spectrum of the zero dimensional ring $S/(x^n)$, where $(S, (x))$ is a
regular local ring of dimension 1.  Moreover we compute the global
dimension of the node and the cusp. We also show that for a $2$-dimensional normal singularity being simple is characterized by its global spectrum. At the end of the section we consider the behavior of the global spectrum under separable field extensions.

\begin{Thm} \label{Thm:gsArtin}
Let $(S,(x))$ be a regular local ring of dimension one. Let $R=S/(x^n)$ and $M_i=R/(x^i)$ for $1 \leq i \leq n$. Let $M$ be an $R$-module. Then $\gl \End_R(M)$ is finite if and only if $\add (M)=\{ M_1\}$, in which case the global dimension is $0$, or  $\add (M)=\{ M_1, \ldots, M_l\}$, for some $1\leq l \leq n$, in which case the global dimension is two. In particular the global spectrum of $R$ is $\{0,2\}$.
\end{Thm}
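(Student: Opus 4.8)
The plan is to reduce the whole statement to two facts: that the endomorphism ring of a representation generator over $S/(x^l)$ is an Auslander algebra of global dimension at most $2$, and that any ``gap'' in the set of summands forces a periodic, hence infinite, resolution. I first record the structural facts. Since $S$ is a discrete valuation ring with uniformizer $x$, every finitely generated $R$-module is a direct sum of the cyclic modules $M_i=R/(x^i)=S/(x^i)$ with $1\le i\le n$; thus the $M_i$ are exactly the indecomposables, and $\add M$ is determined by the set $T=\{i: M_i \text{ is a summand of } M\}\subseteq\{1,\dots,n\}$. As $\gl\End_R(M)$ is a Morita invariant it depends only on $T$. The key normalization is that if $l=\max T$ then each $M_i$ with $i\in T$ is a module over $\bar R:=S/(x^l)$ and $\End_R(M)=\End_{\bar R}(M)$, so I may assume $l=n$, i.e. that $R=M_n\in\add M$ and $M$ is a generator; under this reduction ``$T$ is an initial segment $\{1,\dots,l\}$'' becomes ``$T=\{1,\dots,n\}$'', i.e. $M$ is a representation generator. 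Throughout I use the computation $\Hom_R(M_i,M_j)\cong M_{\min(i,j)}$.

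For the positive direction, suppose $T=\{1,\dots,l\}$. Passing to $\bar R=S/(x^l)$, the module $M$ is the sum of all indecomposable $\bar R$-modules, so $\End_{\bar R}(M)$ is the Auslander algebra of $\bar R$. By Auslander's theorem this algebra has global dimension $\le 2$, with equality exactly when $\bar R$ is not semisimple, i.e. when $l\ge 2$; when $l=1$ one has $\bar R=k$ and $\End=k$ of global dimension $0$. This yields global dimension $0$ for $T=\{1\}$ and $2$ for every longer initial segment.

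For the negative direction I would exploit that $R=M_n\in\add M$ to resolve simples by approximations. Since $R\in\add M$, every module $X$ admits a surjective minimal right $\add M$-approximation $M^{(0)}\to X$; writing $\Omega_{\add M}X$ for its kernel and applying $\Hom_R(M,-)$ (which is exact along such approximations, by definition of ``approximation'') gives $\Omega^i_\Lambda\Hom_R(M,X)\cong\Hom_R(M,\Omega^i_{\add M}X)$, where $\Lambda=\End_R(M)$. As $\Hom_R(M,Y)$ is $\Lambda$-projective if and only if $Y\in\add M$, I obtain the clean formula $\pd_\Lambda\Hom_R(M,X)=\inf\{i\ge 0:\Omega^i_{\add M}X\in\add M\}$, so it suffices to find one module whose $\add M$-syzygies never return to $\add M$. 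Suppose then that $T$ is not a full initial segment, and pick a missing index $q\notin T$ with $q<n$; set $a=\max\{t\in T:t<q\}$ (with $a=0$ if none) and $c=\min\{t\in T:t>q\}$ (which exists since $n\in T$), so $(a,c)$ is a maximal gap containing $q$. Using $\Hom_R(M_i,M_j)\cong M_{\min(i,j)}$, the minimal right $\add M$-approximation of $M_q$ is $M_a\oplus M_c\to M_q$ (omitting $M_a$ when $a=0$), and a direct kernel computation gives $\Omega_{\add M}M_q\cong M_{a+c-q}$. Since $a<q<c$ forces $a<a+c-q<c$, the index $a+c-q$ again lies in the gap $(a,c)$ and is therefore again missing; indeed $\Omega_{\add M}$ acts on the missing indices of this gap by the involution $q\mapsto a+c-q$. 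Hence $\Omega^i_{\add M}M_q$ is missing for all $i$, so $\pd_\Lambda\Hom_R(M,M_q)=\infty$ and $\gl\Lambda=\infty$. Combining the three cases isolates the finite values as $0$ (for $T=\{1\}$) and $2$ (for longer initial segments), so $\gs(R)=\{0,2\}$ once $n\ge 2$.

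The hard part will be the negative direction: making the approximation--syzygy formula precise (surjectivity and minimality of the $\add M$-approximations and exactness of $\Hom_R(M,-)$ along them), and then executing the kernel computation that yields the reflection $q\mapsto a+c-q$. That reflection is exactly what encodes, at the relative level, the $2$-periodicity of syzygies over the hypersurface $R$, and it is the precise structural reason why a gap can never be resolved away; verifying the approximation is genuinely $M_a\oplus M_c\to M_q$ and computing its kernel is the one place where I expect real work rather than formalities.
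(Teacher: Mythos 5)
Your proposal is correct, and its skeleton matches the paper's: both reduce via $\End_R(M)\cong\End_{S/(x^{a_l})}(M)$ to the case where $M$ is a generator, both handle the positive direction by recognizing the Auslander algebra of $S/(x^l)$ (global dimension $0$ for $l=1$, exactly $2$ otherwise), and both kill a ``gap'' by producing relative $\add M$-syzygies that never return to $\add M$. Where you genuinely diverge is in how the gap is shown to persist. The paper takes the (non-minimal) approximation $0\to X\to M^l\to M_c\to 0$ from Construction 3.5 of \cite{DaoHuneke}, \emph{assumes} $X\in\add M$, and derives a contradiction by counting lengths of $\Hom_R(M_j,-)$ applied to the sequence: the resulting linear system forces a non-integral multiplicity. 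You instead compute the minimal right $\add M$-approximation $M_a\oplus M_c\to M_q$ explicitly and identify its kernel as the cyclic module generated by $(1,-x^{q-a})$, namely $M_{a+c-q}$ --- which I have checked is correct (the annihilator of that generator is $(x^{a+c-q})$ and the lengths match). This yields the reflection $q\mapsto a+c-q$ on the gap $(a,c)$, so the relative syzygies are visibly $2$-periodic and never land in $\add M$. Your route is more constructive and explains \emph{why} the resolution fails to terminate, at the cost of having to verify the approximation and kernel by hand; the paper's route avoids identifying the kernel at all but hides the mechanism behind an integrality trick. One small point worth keeping: minimality of your approximation is not actually needed, since by Schanuel any $d$-th syzygy of a module of projective dimension $d$ is projective, and your full-faithfulness remark (valid because $M$ is a generator after the reduction) is what guarantees that $\Hom_R(M,Y)$ projective forces $Y\in\add M$.
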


\begin{proof}
The modules $M_1, \ldots, M_n=R$ are all the indecomposable modules over $R$. Let $l$ be the number of distinct indecomposable summands of $M$. First if $l=1$, then $\End_R(M_i)$ has finite global dimension if and only if $i=1$. For the rest of the proof we assume that $l \geq 2$.
For the purpose of this theorem we may suppose that $M=\bigoplus_{i=1}^l M_{a_i}$ with $a_1 < \ldots < a_l$. Observe that $A=\End_R(M) \cong \End_{S/(x^{a_l})}(M)$. Thus without loss of generality, we can assume that $a_l=n$ and all we need to show is that $\{a_1, \ldots, a_l\}=\{1, \ldots, n\}$. Suppose that this is not the case. Pick any $c  \in \{1, \ldots, n\} \setminus \{a_1, \ldots, a_l\}$. We will prove that $\pd_A \Hom(M, M_c)$ is infinite by showing that a syzygy of $\Hom(M, M_c)$ contains as a direct summand a module $\Hom(M, M_{c'})$ with $c' \in \{1, \ldots, n\} \setminus \{a_1, \ldots, a_l\}$. That would demonstrate that an arbitrarily high syzygy of $\Hom(M, M_c)$ is not projective. \\
It remains to prove the claim. We will build a syzygy of $\Hom(M, M_c)$ using the construction 3.5 of \cite{DaoHuneke}. Since $\Hom(M, M_c)$ has exactly $l$-minimal generators and $a_l=n$, which means $M$ is a generator, we have a short exact sequence 
$$ 0 \lra X \lra M^l \lra M_c \lra 0.$$
By the construction, for any $1 \leq j \leq n$ the sequence remains exact when we apply $\Hom_R(M_j, -)$. In particular, $\Hom_R(M, X)$ is an $A$-syzygy for $\Hom_R(M, M_c)$. 
Suppose the claim is not true. Then $X$ must be in $\add(M)$. Let $X = \bigoplus_{i=1}^lM_{a_i}^{x_i}$. Now for each $j$ we are going to count lengths of the exact sequence
$$ 0 \lra \Hom_R(M_j,X) \lra \Hom_R(M_j,M^l) \lra \Hom_R(M_j,M_c) \lra 0.$$ 
It is easy to see that $\length(\Hom_R(M_a, M_b))=\min \{ a,b\}$. This gives us the following system of $l$ linear equations (for $j=1, \ldots, l$):
$$G_j: \sum_{i=1}^l x_i \min ( a_i, a_j)=l \left(\sum_i \min(a_i,a_j)\right) - \min(c,a_j).$$  
Subtracting $G_{j-1}$ from $G_j$ we get that 
\begin{equation} \label{Equ:important}
 \sum_{i=j}^l x_i (a_j - a_{j-1})= l(l-j+1) (a_j - a_{j-1}) + \min(c, a_{j-1})-\min(c, a_j).
\end{equation}
 Consider 2 cases (recall that $a_l=n$): \\
(i) $a_1 < c < a_l$, let $t$ be such that $a_{t-1} < c < a_t$. Then the above equation becomes 
$$ \sum_{i=t}^l x_i (a_t - a_{t-1})=l(l-t+1)(a_t- a_{t-1})+ (a_t -c).$$
Equivalently, $\sum_{i=t}^lx_t -l(l-t+1) = \frac{a_{t-1} -c}{a_t - a_{t-1}}$. However, the right hand side is not an integer. \\
(ii) $0 < c < a_1$, so (\ref{Equ:important}) gives us $\sum_{i=j}^lx_j=l(l-j+1)$ for $2 \leq  j \leq l$. Substituting this into the original equation, this gives us 
$$\sum_{i=1}^lx_i a_1=l^2 a_1 -c.$$
It follows that $(\sum_{i=1}^lx_i -l^2)=- \frac{c}{a_1}$, but again the right hand side is not an integer. 

Observe now that since we have proved that $M$ must be a representation generator, $\gl \End_R(M)=2$.
 \end{proof}

\begin{Prop}  \label{Prop:gsnode}
Let $R=k[[x,y]]/(xy)$. Then $\gs{R}=\{1,2,3\}$. 
\end{Prop}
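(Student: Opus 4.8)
The plan is to exploit that $R=k[[x,y]]/(xy)$ is the node: a reduced, complete (hence Henselian) local ring of dimension one and of finite $\CM$-type. First I would record its indecomposable maximal Cohen--Macaulay modules. Computing the reduced matrix factorizations of $xy$ over $k[[x,y]]$ (or citing the classical classification for the node), the only indecomposable $\CM$ $R$-modules are the free module $R$ and the two branches $R/(x)\cong k[[y]]$ and $R/(y)\cong k[[x]]$, with $\widetilde R\cong R/(x)\oplus R/(y)$. Since $\gl\End_R(M)$ depends only on $\add M$ (for $\add M=\add M'$ the endomorphism rings are Morita equivalent, hence share their global dimension), it suffices to compute $\gl\End_R(M)$ for the seven nonempty collections of indecomposable summands.

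Next I would treat the ``small'' $\add$-classes directly. For $M=R$ we have $\End_R(R)=R$, which has infinite global dimension since $R$ is singular, so it contributes no finite value. For a single branch, $\End_R(R/(x))=\End_{k[[y]]}(k[[y]])=k[[y]]$ is a discrete valuation ring, so $\gl=1$, and symmetrically for $R/(y)$. For $M=\widetilde R=R/(x)\oplus R/(y)$ I would check that the cross Hom-groups $\Hom_R(R/(x),R/(y))$ and $\Hom_R(R/(y),R/(x))$ both vanish (the branches are supported on the two components, which meet only at $\mm$, and an $R$-linear map between them is forced to be zero by comparing the actions of $x$ and $y$); hence $\End_R(\widetilde R)\cong k[[y]]\times k[[x]]=\widetilde R$, which again has global dimension $1$. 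Thus $1\in\gs(R)$.

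For the two remaining $\add$-classes I would invoke results already established. The module $M=R\oplus R/(x)\oplus R/(y)$ is the direct sum of all indecomposable $\CM$-modules, so Proposition~\ref{Prop:globaldimfiniteCM} gives $\gl\End_R(M)=2$, whence $2\in\gs(R)$. Finally, for $M=R\oplus R/(x)$ I would apply Corollary~\ref{Hyper1} to the factorization $f=xy$ into the primes $f_1=x$, $f_2=y$, each outside $\mm^2$: the partial normalizations there are $S_1=S/(x)\cong R/(x)$ and $S_2=S/(xy)=R$, so the module $T=S_1\oplus S_2$ is precisely $R\oplus R/(x)$, and the corollary yields $\gl\End_R(R\oplus R/(x))=3$; taking instead the factorization $f=y\cdot x$ gives $\gl\End_R(R\oplus R/(y))=3$ symmetrically. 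Hence $3\in\gs(R)$.

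Assembling the seven cases, the finite values that occur are exactly $1$, $2$, and $3$, so $\gs(R)=\{1,2,3\}$. The points requiring the most care are the classification of the indecomposable $\CM$-modules together with the Morita reduction to $\add$-classes, and the bookkeeping that identifies $R\oplus R/(x)$ with the module $T$ of Corollary~\ref{Hyper1}; the vanishing of the cross Hom-groups for $\widetilde R$ is the only real hand computation, and it is routine once the supports are separated.
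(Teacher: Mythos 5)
Your proof is correct and follows essentially the same route as the paper: classify the three indecomposable $\CM$-modules of the node, reduce to $\add$-classes, compute $\gl\End_R(M)$ directly for the branches and the normalization, and invoke Proposition~\ref{Prop:globaldimfiniteCM} for the representation generator and Corollary~\ref{Hyper1} for $R\oplus R/(x)$. Your write-up is slightly more explicit than the paper's (the Morita reduction, the exhaustion of all seven nonempty $\add$-classes, the vanishing of the cross Hom-groups, and the identification of $R\oplus R/(x)$ with the module $T$ of Corollary~\ref{Hyper1}), but the substance is identical.
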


\begin{proof}
The ring $R$ has three indecomposable $\CM$-modules, in
Yoshino's \cite[p.75ff]{Yoshino90} notation: $N_+=R/x$, $N_{-}=R/y$ and $R$ itself. Thus any $\CM$-module is of the form $M=N_+^a \oplus N_{-}^b \oplus R^c$. There can occur essentially three different situations, which yield $\gl \End_R M \leq \infty$: \\
(i) $M=N_{+}$ or $M=N_{-}$: then $\End_R M \cong M$ and is a regular commutative ring of global dimension 1. Note that $\End_R M$ is not a NCR since $M$ is not faithful. \\
(ii) $M=N_+ \oplus N_{-}$: then $\End_R M \cong \widetilde R$ and $\gl \End_R M =1$. Note that this is the only NCCR of $R$. \\
(iii) $M=R \oplus N_{+}$ or $M=R \oplus N_{-}$: Then $M$ is cluster-tilting and by Prop.~\ref{Hyper1} $\gl \End_R M=3$. Here $\End_R M$ is a NCR of $R$. \\
The quiver of $\End_R M$ (see section \ref{Subsub:Computation} for an explanation of this term) has the following form:
\[
\begin{tikzpicture}
\node at (4,0) {\begin{tikzpicture} 
\node (C1) at (0,0)  {$1$};
\node (C2) at (1.75,0)  {$2$};

\draw [->,bend left=20,looseness=1,pos=0.5] (C1) to node[inner sep=0.5pt,fill=white]  {$\scriptstyle a$} (C2);
\draw [->,bend left=20,looseness=1,pos=0.5] (C2) to node[inner sep=0.5pt,fill=white] {$\scriptstyle b$} (C1);
\draw[->]  (C2) edge [in=40,out=-40,loop,looseness=8,pos=0.5] node[right] {$\scriptstyle c$} (C2);
\end{tikzpicture}};
\end{tikzpicture}
\] 
with the relation $ca=bc=0.$\\
(iv) $M=R \oplus N_{+} \oplus N_{-}$: then $M$ is a representation generator and by Prop.~\ref{Prop:globaldimfiniteCM} the global dimension of $\End_R M$ is equal to $2$.  We  get path algebra of the following quiver of $\End_R M$
\[
\begin{tikzpicture}
\node at (4,0) {\begin{tikzpicture} 
\node (C1) at (0,0)  {$1$};
\node (C2) at (1.75,0)  {$12$};
\node (C3b) at (3.6,-0.05) {};
\node (C3) at (3.50,0) {$2$};
\draw [->,bend left=20,looseness=1,pos=0.5] (C1) to node[inner sep=0.5pt,fill=white]  {$\scriptstyle a$} (C2);
\draw [->,bend left=20,looseness=1,pos=0.5] (C2) to node[inner sep=0.5pt,fill=white] {$\scriptstyle b$} (C1);
\draw [->,bend left=20,looseness=1,pos=0.5] (C2) to node[inner sep=0.5pt,fill=white]  {$\scriptstyle c$} (C3);
\draw [->,bend left=20,looseness=1,pos=0.5] (C3) to node[inner sep=0.5pt,fill=white] {$\scriptstyle d$} (C2);
\end{tikzpicture}};
\end{tikzpicture}
\] 
subject to the relations $$ c a = b d =0.$$ 
\end{proof}

\begin{Prop} \label{Prop:gscusp}
Let $R=k[[x,y]]/(x^3+y^2)$ be the cusp. Then $\gs R=\{1,2 \}$. 
\end{Prop}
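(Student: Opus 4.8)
The plan is to exploit the finite $\CM$-type of the cusp, which limits the possible endomorphism rings to just three Morita classes, and then to read off their global dimensions from the results already established in the paper.

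First I would identify the indecomposable maximal Cohen--Macaulay modules. The cusp $R=k[[x,y]]/(x^3+y^2)$ is the $A_2$ plane curve singularity, so it has finite $\CM$-type with exactly two indecomposable $\CM$-modules: $R$ itself and the normalization $\widetilde R$. Concretely $\widetilde R = k[[t]]$ with $x=t^2$, $y=t^3$, and as an $R$-module $\widetilde R \cong \mm$ via multiplication by $x=t^2$, since $t^2\widetilde R = t^2R + t^3R = (x,y) = \mm$. This is precisely the $n=3$ instance of case (\ref{An}) above, where $R_1 = \End_R(\mm)\cong k[[x,y]]/(y^2+x)\cong k[[t]]$ is already the regular normalization. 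Consequently every $M\in\CM(R)$ is isomorphic to $R^a\oplus \widetilde R^{\,b}$ for some $a,b\geq 0$, so $\add M$ is one of $\add R$, $\add \widetilde R$, or $\add(R\oplus \widetilde R)$.

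Second, since $\gl\End_R(M)$ depends only on $\add M$ by Morita invariance of global dimension, it suffices to compute it in these three cases. When $\add M = \add R$, the ring $\End_R(M)$ is a matrix ring over $R$; as $R$ is a singular hypersurface it has infinite global dimension, so this case contributes nothing to $\gs(R)$. When $\add M = \add \widetilde R$, the ring $\End_R(M)$ is Morita equivalent to $\End_R(\widetilde R)$, and one checks $\End_R(\widetilde R)=\widetilde R = k[[t]]$ (any $R$-endomorphism of the rank-one module $\widetilde R$ is multiplication by an element of the total quotient ring that is integral over $R$, hence lies in the integrally closed ring $\widetilde R$); this ring is regular, so $\gl\End_R(M)=1$ and $1\in\gs(R)$. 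Finally, when $\add M = \add(R\oplus\widetilde R)$, the module $M$ is a representation generator of the non-regular ring $R$, so Proposition~\ref{Prop:globaldimfiniteCM} yields $\gl\End_R(M)=2$ and $2\in\gs(R)$. Combining the three cases gives $\gs(R)=\{1,2\}$.

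I would emphasize the contrast with the node: the cusp is irreducible, so there is no proper intermediate ring between $R$ and $\widetilde R$ and no nontrivial partial normalization. In particular the representation generator $R\oplus\widetilde R$ is \emph{not} cluster-tilting, for otherwise the Lemma preceding Corollary~\ref{Hyper1} would force global dimension $3$, contradicting the value $2$ given by Proposition~\ref{Prop:globaldimfiniteCM}; this is exactly why $3\notin\gs(R)$. The main obstacle is the first step, namely establishing that the cusp has precisely these two indecomposable $\CM$-modules and identifying $\End_R(\widetilde R)$ with the regular ring $\widetilde R$. Once the list of indecomposables is fixed, finiteness of $\CM$-type collapses the problem to the three $\add$-closed subcategories, and the global dimensions follow from Morita invariance together with Proposition~\ref{Prop:globaldimfiniteCM}.
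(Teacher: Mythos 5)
Your proof is correct and follows essentially the same route as the paper: list the indecomposables of the finite $\CM$-type ring $R$ (namely $R$ and $\mm\cong\widetilde R$), discard $\add R$ as giving infinite global dimension, and read off $\gl\End_R(\widetilde R)=1$ and $\gl\End_R(R\oplus\widetilde R)=2$ from Proposition~\ref{Prop:globaldimfiniteCM}. The only differences are cosmetic: you spell out the Morita reduction to the three $\add$-classes and the identification $\mm\cong\widetilde R$, which the paper leaves implicit.
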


\begin{proof}
Here $\CM (R)$ has two indecomposables: $R$ and $\mf{m}$. Similar to the proof of Prop.~\ref{Prop:gsnode} there are two cases which yield an endomorphism ring of finite global dimension: \\
(i) $M=R \oplus \mf{m}$: then $\gl \End_R M =2$ by Prop.~\ref{Prop:globaldimfiniteCM}. The quiver of $\End_R M$ (see section \ref{Subsub:Computation} for an explanation of this term) has the following form:
\[
\begin{tikzpicture}
\node at (4,0) {\begin{tikzpicture} 
\node (C1) at (0,0)  {$1$};
\node (C2) at (1.75,0)  {$2$};

\draw [->,bend left=20,looseness=1,pos=0.5] (C1) to node[inner sep=0.5pt,fill=white]  {$\scriptstyle a$} (C2);
\draw [->,bend left=20,looseness=1,pos=0.5] (C2) to node[inner sep=0.5pt,fill=white] {$\scriptstyle b$} (C1);
\draw[->]  (C2) edge [in=40,out=-40,loop,looseness=8,pos=0.5] node[right] {$\scriptstyle c$} (C2);
\end{tikzpicture}};
\end{tikzpicture}
\]

with the relation $ab=c^2.$\\\\
(ii) $M=\mf{m}$: Then $\End_R M \cong \widetilde R$ and its global dimension is $1$.
\end{proof}

In dimension $2$ we can determine the global spectrum of simple singularities:

\begin{Thm}  \label{Thm:gssimplesurface}
Assume that $R$ is a Henselian local $2$ dimensional simple singularity. Let $M \in \CM(R)$, then $\gl(\End_R(M))$ is finite if and only if $\add (M)=\CM(R)$. The global spectrum of $R$ is $\{2\}$.
\end{Thm}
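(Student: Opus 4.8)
The plan is to prove the two implications separately and to extract the value $2$ along the way. Throughout I would use that a $2$-dimensional simple singularity $R$ is a normal Gorenstein (hypersurface) isolated singularity which moreover is a quotient singularity $\C^2/G$ with $G\subseteq SL_2(\C)$ finite, and in particular has finite $\CM$-type; since $R$ is a domain, every nonzero $M\in\CM(R)$ is automatically faithful with full support. I would also record two standard facts about a $2$-dimensional normal domain: $\CM(R)$ coincides with the reflexive modules, and $\End_R(M)=\Hom_R(M,M)$ is then reflexive, hence $\CM$ over $R$ (its depth equals $2=\dim R$).

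For the implication $\add(M)=\CM(R)\Rightarrow\gl\End_R(M)=2$, the module $M$ is a representation generator, so up to Morita equivalence this is exactly the computation already recorded in the proof of Corollary~\ref{Cor:generatorNCCRdim2}: since $R$ is a quotient singularity it has finite $\CM$-type by Herzog's theorem, and the endomorphism ring of the sum of all indecomposables in $\CM(R)$ is homologically homogeneous of global dimension $2$. This shows $2\in\gs(R)$ and pins down the value.

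For the converse I would assume $\gl\End_R(M)<\infty$ and first produce a NCCR. Since $M$ is reflexive and $A:=\End_R(M)$ is $\CM$ over the Gorenstein domain $R$ with $\gl A<\infty$, van den Bergh's criterion (recalled after the first definition of Section~\ref{Sub:definitionNCCR}) shows that $A$ is a NCCR. As $R$ is local, hence equicodimensional, Proposition~\ref{Prop:nonsingularOrderisHomHom} gives that $A$ is homologically homogeneous, whence $\gl A=\dim R=2$; this already forces $\gs(R)\subseteq\{2\}$. It then remains to see that $A$ being a NCCR forces $\add(M)=\CM(R)$. Here I would reduce to $M$ basic (multiplicity-free), which changes neither $\add(M)$ nor $\gl A$ up to Morita equivalence, so that the indecomposable summands of $M$ are in bijection with the simple $A$-modules. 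The representation generator $N=\bigoplus_{\mathrm{indec}}M_i$ gives a second NCCR $\Gamma=\End_R(N)$, Morita equivalent to the skew group algebra $\C[[x,y]]\rtimes G$, and the point is that in dimension two the NCCR is unique up to Morita equivalence: both $A$ and $\Gamma$ are derived equivalent to the unique minimal resolution $Y\to\Spec R$, so the number of simple $A$-modules equals $\operatorname{rank}\KG(A)=\operatorname{rank}\KG(Y)=1+\#\{\text{exceptional curves of }Y\}$, which by the McKay correspondence is the number of indecomposables in $\CM(R)$. Thus basic $M$ has exactly as many indecomposable summands as there are indecomposables in $\CM(R)$, giving $\add(M)=\CM(R)$.

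The main obstacle is precisely this last step, passing from ``$A$ is a NCCR'' to ``$M$ is the full representation generator''. Showing even that $M$ is a generator is not formal, and one cannot simply invoke Proposition~\ref{IW}, which presupposes that $M$ is a generator; the honest content is the uniqueness of NCCRs of a $2$-dimensional quotient singularity up to Morita equivalence, equivalently the matching of the number of simples with the number of indecomposables in $\CM(R)$ via the McKay correspondence and the derived equivalence with the minimal resolution. I note that \emph{once} $M$ is known to be a generator one could finish more cheaply by applying Proposition~\ref{IW} with $n=0$: the inequality $\gl A\le 2$ then says that every $X\in\CM(R)$ fits into an exact sequence $0\to M_0\to X\to 0$ with $M_0\in\add(M)$, i.e.\ $X\cong M_0\in\add(M)$, so $\CM(R)\subseteq\add(M)$ directly.
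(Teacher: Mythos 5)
Your first implication ($\add(M)=\CM(R)\Rightarrow\gl=2$) and your derivation that any $M$ with $\gl\End_R(M)<\infty$ gives a NCCR of global dimension $2$ (reflexive $=$ MCM in dimension two, $\End_R(M)$ is MCM, van den Bergh's criterion, homological homogeneity) are correct and consistent with the paper. The problem is the step you yourself flag as the main obstacle: passing from ``$A$ is a NCCR'' to ``$M$ is a representation generator.'' You resolve it by invoking that every NCCR of $R$ is derived equivalent to the minimal resolution $Y$, so that the number of simples of $A$ can be counted via $\KG(Y)$ and the McKay correspondence. But Van den Bergh's tilting construction only relates $Y$ to one particular NCCR (the one built from the tilting bundle, Morita equivalent to the skew group algebra); relating an \emph{arbitrary} NCCR $\End_R(M)$ to $Y$ requires the uniqueness of NCCRs in dimension two up to derived/Morita equivalence, and the standard proofs of that uniqueness proceed precisely by showing that any module giving a NCCR of a finite-CM-type surface must already be a representation generator --- i.e.\ they contain the statement you are trying to prove. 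As written, this step is either circular or outsourced to a theorem substantially heavier than the result itself, and it is not established anywhere in the paper.

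The paper's own argument avoids all of this and never needs $M$ to be a generator. It uses where ``simple singularity'' really enters: by Lemma 5.4 of Auslander's paper on isolated singularities, $\omega_A=\Hom_R(A,R)\cong A$, so $A=\End_R(M)$ is a \emph{Gorenstein order}. By Lemma 2.16 of Iyama--Wemyss the Auslander--Buchsbaum formula then holds for $\mmod A$; combined with $\gl A<\infty$, every $A$-module of depth $2$ is projective. For any $N\in\CM(R)$ the module $\Hom_R(M,N)$ has depth $2$ (it is reflexive over the normal surface $R$), hence is $A$-projective, hence of the form $\Hom_R(M,M')$ with $M'\in\add M$, and the reflexive equivalence gives $N\cong M'\in\add M$. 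You should replace your $K$-theoretic counting by this two-line depth argument; alternatively, if you insist on your route, you must supply an independent proof that an arbitrary NCCR of a quotient surface singularity is derived equivalent to $Y$, which is not available from the results quoted in the paper.
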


\begin{proof}
Let $A=\End_R(M)$ and assume $\gl (A) < \infty$. By Lemma 5.4 of \cite{Auslander86} we know that $\omega_A=\Hom_R(A,R) \cong A$, thus $A$ is a Gorenstein order. Hence by Lemma 2.16 of \cite{IyamaWemyss10} the Auslander--Buchsbaum formula holds for $\mmod A$. Take any $N \in \CM(R)$. Then $\Hom_R(M,N)$ is an $A$-module of depth $2$, so must be projective. Therefore $N \in \add(M)$ and we are done.
\end{proof}

\begin{Cor}
Let $R$ be a singular complete local  normal domain of dimension two. Then $\gs (R)=\{2 \}$ if and only if $R$ is a simple singularity.
\end{Cor}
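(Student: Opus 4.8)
The plan is to treat the corollary as the two-sided sharpening of Theorem~\ref{Thm:gssimplesurface}: the forward implication is immediate, and essentially all the work is in the converse. Indeed, if $R$ is a simple singularity then it is in particular a Henselian (being complete) local $2$-dimensional simple singularity, so Theorem~\ref{Thm:gssimplesurface} applies directly and gives $\gs(R)=\{2\}$.

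For the converse I would assume $\gs(R)=\{2\}$ and argue in two stages. First, since $\gs(R)\neq\emptyset$ there is a nonzero $M\in\CM(R)$ with $\gl\End_R(M)$ finite. As $R$ is a $2$-dimensional normal domain, every nonzero maximal Cohen--Macaulay module satisfies $(\Se_2)$ and is therefore reflexive, so it has full support $\supp M=\supp R$; hence $M$ gives an NCR of $R$. Because $R$ is normal we have $\widetilde R=R$, so Theorem~\ref{Thm:NCRdim2} forces $R$ to have rational singularities. Now suppose for contradiction that $R$ is not simple. Since a Gorenstein rational surface singularity is a rational double point (hence an ADE, i.e.\ simple, singularity), $R$ must be non-Gorenstein.

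The crux is then to produce a maximal Cohen--Macaulay module whose endomorphism ring has finite global dimension different from $2$. For this I would use the reconstruction algebra: let $M_0=R\oplus\bigoplus_i M_i$, where $M_1,\dots,M_t$ are the finitely many nontrivial special (Wunram) MCM modules attached to the exceptional curves of the minimal resolution of $R$, and put $A=\End_R(M_0)$. Then $M_0$ is a generator, and since $R$ is normal (hence Gorenstein in codimension one) of dimension $2$, Proposition~\ref{IW} applies. The defining property of special modules yields, for every $X\in\CM(R)$, a short exact sequence $0\to M'\to M''\to X\to 0$ with $M',M''\in\add(M_0)$ that stays exact after applying $\Hom_R(M_0,-)$; thus condition~(2) of Proposition~\ref{IW} holds with $n=1$ and $\gl A\leq 3$. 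On the other hand, since $R$ is non-Gorenstein there is an indecomposable MCM module that is not special, so $\add(M_0)\neq\CM(R)$; by the $n=0$ case of Proposition~\ref{IW} this excludes $\gl A\leq 2$. Hence $\gl A=3$, so $3\in\gs(R)$, contradicting $\gs(R)=\{2\}$. Therefore $R$ is Gorenstein and rational, i.e.\ a rational double point, which is precisely a simple singularity.

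The hard part will be the global-dimension computation $\gl A=3$ for the reconstruction algebra in the non-Gorenstein case, and in particular verifying the hypothesis of Proposition~\ref{IW}: that the special MCM modules provide the length-one resolutions which remain exact under $\Hom_R(M_0,-)$, and that a non-special indecomposable MCM module exists. This rests on the structure theory of special modules over rational surface singularities and on the computation of global dimensions of reconstruction algebras, and must be arranged to work uniformly whether or not $R$ has finite CM type.
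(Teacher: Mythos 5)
Your proposal is correct and follows essentially the same route as the paper: the forward direction is Theorem~\ref{Thm:gssimplesurface}, and the converse deduces rationality of $R$ from the existence of an NCR and then rules out the non-Gorenstein case by exhibiting the reconstruction algebra $\End_R(R\oplus\bigoplus_i M_i)$ of global dimension $3$. The only difference is that the paper simply cites \cite[Thm.~2.10]{IyamaWemyss10a} (together with syzygy-finiteness, \cite[Thm.~3.6]{IyamaWemyss10a}) for the fact that this algebra has global dimension $3$, whereas you sketch its proof via Proposition~\ref{IW} and the structure theory of special modules --- a correct outline of the argument behind that citation.
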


\begin{proof}
By Thm.~\ref{Thm:gssimplesurface} it remains to show that $\gs (R)=\{2\}$ implies that $R$ is a simple singularity. By \cite{DaoIyamaTakahashiVial} $R$ has a NCR if and only if $R$ has a rational singularity, thus $\gs \neq \emptyset$ if and only if $R$ is a rational singularity. By  \cite[Thm.~3.6]{IyamaWemyss10a} a rational normal surface singularity $R$ is syzygy finite, that is, there exist only finitely many indecomposable $R$-modules, which are syzygies of $\CM(R)$-modules.  Thus, if  $R$ is not Gorenstein, then by \cite[Thm.~2.10]{IyamaWemyss10a} there exists a so-called reconstruction algebra, which has global dimension $3$.  Hence $R$ has to be a simple singularity. 
\end{proof}


It is interesting to relate the global spectrum of a ring and an extension. For separable field extensions we have an inclusion of global spectra.

\begin{Lem}\label{fieldex}
Let $k \to K$ be a separable field extension and $R$  a $k$-algebra. Then for any $M \mmod R$, $\gl \End_R(M) = \gl \End_{R\otimes_kK}(M\otimes_kK)$. In particular,   $\gs(R) \subseteq \gs(R\otimes_kK)$ and they are equal if any MCM module over $R\otimes_kK$ is extended from $R$.  
\end{Lem}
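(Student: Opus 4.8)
The plan is to set $A=\End_R(M)$ and $B=\End_{R\otimes_kK}(M\otimes_kK)$, to identify $B$ with the scalar extension $A\otimes_kK$, then to prove $\gl A=\gl(A\otimes_kK)$ using separability of $K/k$, and finally to read off the statements about global spectra. First I would establish $B\cong A\otimes_kK$. Since $R$ is noetherian and $M$ is finitely generated, $M$ is finitely presented, so it has a presentation $R^{a}\to R^{b}\to M\to 0$. Because $K$ is flat over $k$, applying $\Hom_R(-,M)$, tensoring with $K$, and comparing with $\Hom_{R\otimes_kK}(-,M\otimes_kK)$ applied to the base-changed presentation yields a natural isomorphism $\End_R(M)\otimes_kK\cong\End_{R\otimes_kK}(M\otimes_kK)$, which one checks is an isomorphism of rings. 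Thus it suffices to prove $\gl A=\gl(A\otimes_kK)$. Note $A$ is module-finite over the noetherian ring $R$, hence noetherian.

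For the easy inequality, I would use that $A\to A\otimes_kK$ is faithfully flat. For a finitely presented $A$-module $N$, base change of a projective resolution gives $\pd_{A\otimes_kK}(N\otimes_kK)\le\pd_A N$, while flat base change for $\Ext$ together with faithful flatness gives the reverse, so $\pd_A N=\pd_{A\otimes_kK}(N\otimes_kK)$. Taking the supremum over finitely generated $N$ then yields $\gl A\le\gl(A\otimes_kK)$.

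The reverse inequality $\gl(A\otimes_kK)\le\gl A$ is where separability is essential, and I expect this to be the main obstacle, since a general $(A\otimes_kK)$-module need not be extended from $A$; indeed, for a purely inseparable $K/k$ the algebra $K\otimes_kK$ is non-reduced of infinite global dimension and the inequality fails, so separability cannot be dispensed with. My approach is to reduce to manageable extensions by writing $K$ as the filtered union of its finitely generated subextensions $K_0$; since global dimension does not increase along the flat filtered colimit $A\otimes_kK=\varinjlim A\otimes_kK_0$ (a finitely presented module and its resolution descend to a finite stage), it suffices to bound $\gl(A\otimes_kK_0)$. By separability each $K_0$ is a finite separable extension of a purely transcendental $k(t_1,\dots,t_n)$, so two cases remain. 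For the purely transcendental part I would invoke the known identity $\gl\bigl(A\otimes_k k(t_1,\dots,t_n)\bigr)=\gl A$. For the finite separable part, $K_0$ is a separable $k$-algebra, hence $A\otimes_kK_0$ is a separable $A$-algebra that is free as an $A$-module; consequently every $(A\otimes_kK_0)$-module is a $B$-direct summand of a module induced from $A$, giving $\pd_{A\otimes_kK_0}N\le\pd_A N$ and therefore $\gl(A\otimes_kK_0)\le\gl A$. Combining the two cases gives $\gl(A\otimes_kK)\le\gl A$, which together with the previous paragraph proves $\gl\End_R(M)=\gl\End_{R\otimes_kK}(M\otimes_kK)$.

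Finally, for the global spectra, I would note that the fibre of $R\to R\otimes_kK$ is $K$, which is regular, so $R\otimes_kK$ is again Cohen--Macaulay and, for $M\in\CM(R)$, the module $M\otimes_kK$ is maximal Cohen--Macaulay over $R\otimes_kK$ (depth and dimension being preserved under this flat base change). Hence if $n=\gl\End_R(M)\in\gs(R)$ with $M$ MCM, then $M\otimes_kK$ is MCM and, by the first part, $\gl\End_{R\otimes_kK}(M\otimes_kK)=n$, so $n\in\gs(R\otimes_kK)$; this gives $\gs(R)\subseteq\gs(R\otimes_kK)$. If in addition every MCM module over $R\otimes_kK$ has the form $M\otimes_kK$, then faithfully flat descent shows such $M$ is MCM over $R$, and the first part supplies the reverse inclusion, yielding equality.
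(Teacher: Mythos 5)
Your identification $\End_{R\otimes_kK}(M\otimes_kK)\cong\End_R(M)\otimes_kK$ and your argument for $\gl A\le\gl(A\otimes_kK)$ via faithfully flat base change match the paper (which phrases the latter as: $A_K$ is $A$-projective and contains $A$ as an $A$-direct summand, hence $\pd_AX=\pd_{A_K}X_K$). Where you diverge is the reverse inequality. The paper disposes of it in one line: every simple $A_K$-module is a direct summand of some $X_K$ with $X$ an $A$-module (citing the proof of Lemma 3.6 of Dao--Iyama--Takahashi--Vial), and since the global dimension of such an algebra is computed on simple modules, $\gl A_K\le\gl A$ follows. Your finite-separable case --- $A\otimes_kK_0$ is a separable $A$-algebra, so every module is a summand of an induced one --- is exactly the mechanism behind that citation, carried out by hand; it is correct and arguably more self-contained.

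The step I would not let stand is the purely transcendental one. The ``known identity'' $\gl\bigl(A\otimes_kk(t)\bigr)=\gl A$ is false without further hypotheses: take $A=k(s)$; then $\gl A=0$, but $A\otimes_kk(t)$ is the localization of $k(s)[t]$ at $k[t]\setminus\{0\}$, in which $t-s$ is a non-unit, so it is a one-dimensional regular domain of global dimension $1$. (With $R=M=k(s)$ this is in fact a counterexample to the lemma as literally stated; both the statement and the paper's one-line argument implicitly require $K/k$ to be \emph{algebraic} separable, which is all that is used --- the sole application is $\mathbb{R}\subset\mathbb{C}$.) So either restrict to algebraic extensions, where your reduction to a filtered union of finite separable subextensions is complete and correct, or supply genuine hypotheses under which the transcendental identity holds. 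The closing paragraph on MCM modules and global spectra is fine.
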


\begin{proof}
Denote $A=  \End_R(M)$ and $-_K = -\otimes_kK$. Given an $A$-module $X$, we have $\pd_AX= \pd_{A_K}X_K$ as $A_K$ is $A$-projective and contains $A$ as a $A$-direct summand. Thus $\gl A_K\geq \gl A$. On the other hand, any simple $A_K$-module is a direct summand of some $X_K$, as in proof of \cite[Lemma 3.6]{DaoIyamaTakahashiVial}, so we also have $\gl A_K \leq \gl A$. 
\end{proof}

It is not always true that $\gs(R) = \gs(R_K)$ even in the separable case.

\begin{ex}
Let $R= \R[[u,v]]/(u^2+v^2)$. Then $\gs(R) = \{1,2\}$ but $\gs(R_{\C}) =\{1,2,3\}$. 
\end{ex}

\begin{proof}
The global spectrum of $R_\C$ has been computed in~\ref{Prop:gsnode} where $x=u+iv, y=u-iv$.
The irreducible maximal Cohen-Macaulay modules over $R$ are only $R$ and the maximal ideal $\mf{m}$ by Example 
14.12~\cite{Yoshino90}.
Note that $\mf{m} \otimes \C \simeq N_+ \oplus N_-$ as in~\ref{Prop:gsnode}.
So up to Morita equivalence we only get cases (ii) and (iv).
\end{proof}

\section{Some endomorphism rings of finite global dimension } \label{Sec:fingldim}

This section deals with question
  \ref{Qu:regularfiniteglobaldim} and, more generally, with explicit computation of the global dimension of an endomorphism ring. In Theorem \ref{Thm:colin}, we
  prove the finiteness of the global dimensions of  endomorphism rings
  of certain  reflexive $R$-modules, where $R$ is a  quasi-normal
  ring.  In conjunction with a result of Buchweitz--Pham, an endomorphism ring of a reflexive module over a regular
  ring with finite global dimension can be constructed. This is only a first step in a more general
  study of  endomorphism rings of modules over regular rings of finite
  global dimension. However, in \ref{Sub:NCRnormalcrossing} a NCR for a normal crossing divisor is derived from this result. \\
In Thm.~\ref{thmsnc} we construct a different NCR for the normal crossing divisor, obtained from a Koszul algebra, and compute its global dimension.
Finally, in \ref{Sub:computation}, we discuss a method for explicit computation of the global dimension for endomorphism rings over Henselian local rings.

\subsection{Global dimension of syzygies of $k$} \label{Sub:syzygies}

A  commutative noetherian ring $R$ is called \emph{quasi-normal} if it is Gorenstein in codimension one and if it satisfies condition ($S_2$), i.e., for any prime ideal $\mf{p}$ of  $R$ one has $\depth(R_\mf{p}) \geq \min\{ 2, \dim R_\mf{p} \}$.  We aim to prove the following

\begin{Thm} \label{Thm:colin} Let $R$ be a quasi-normal ring  and $I$ be an ideal of grade at least $2$ on $R$. Let $M$ be a reflexive module with $R$ as a summand and assume that $\gl \End_R (M \oplus I)$ is finite. Then $\gl \End_R(M)$ is also finite. \end{Thm}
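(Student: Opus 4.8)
The plan is to realize $A:=\End_R(M)$ as the idempotent corner $eBe$ of $B:=\End_R(M\oplus I)$, where $e$ is the projection of $M\oplus I$ onto the summand $M$ and $f=1-e$ the projection onto $I$, and then to transport a \emph{finite} projective resolution from $\mmod B$ down to $\mmod A$ along the corner functor $e(-)\colon \mmod B\to \mmod A$, $X\mapsto eX$. This functor is exact, since $Be$ is a projective left $B$-module (equivalently $eB$ is flat as a right $B$-module). Moreover, for any $A$-module $Y$ the $B$-module $X:=Be\otimes_A Y$ satisfies $eX=eBe\otimes_A Y=Y$. Hence, given a projective $B$-resolution $P_\bullet\to X$ of length at most $\gl B$, applying $e(-)$ would yield a resolution $eP_\bullet\to Y$ of length at most $\gl B$, so that $\gl A\le \gl B<\infty$ --- \emph{provided} $e(-)$ carries projective $B$-modules to projective $A$-modules.

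Establishing that last property is the heart of the matter, and the only place where the hypotheses are used. Since every projective is a summand of a sum of copies of $Be$ and $Bf$, it suffices to treat these two. For $Be$ one gets $eBe=A$. For $Bf$, the identification $Bf\cong \Hom_R(I,M\oplus I)$ of left $B$-modules gives $eBf\cong \Hom_R(I,M)$ as a left $A=\End_R(M)$-module, the $A$-action being postcomposition. So everything reduces to showing that $\Hom_R(I,M)$ is a projective $A$-module. Here I would use that $M$ is reflexive over the quasi-normal ring $R$, hence satisfies $(\Se_2)$: every prime $\mf p\supseteq I$ has height $\ge \operatorname{grade}(I)\ge 2$, so $\depth M_{\mf p}\ge 2$, whence $\operatorname{grade}(I,M)\ge 2$ and therefore $\Hom_R(R/I,M)=\Ext_R^1(R/I,M)=0$. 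Applying $\Hom_R(-,M)$ to $0\to I\to R\to R/I\to 0$ then produces an isomorphism $\Hom_R(R,M)\xrightarrow{\ \sim\ }\Hom_R(I,M)$ of left $A$-modules, induced by the inclusion $I\hookrightarrow R$.

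Finally, because $R$ is a direct summand of $M$, with corresponding idempotent $e_0\in A$, one has $\Hom_R(R,M)\cong M\cong Ae_0$, a projective $A$-module; combined with the previous isomorphism this shows $\Hom_R(I,M)\cong eBf$ is projective over $A$ (and so is each summand $\Hom_R(I_i,M)$ if one prefers to decompose). This closes the loop: $e(-)$ preserves projectives, and the resolution transport above delivers $\gl\End_R(M)\le \gl\End_R(M\oplus I)<\infty$. I expect the main obstacle to be exactly the identification of the off-diagonal block $eBf$ with $\Hom_R(I,M)$ together with its projectivity over $A$; once the grade-two vanishing is in hand, the reflexive hull $I^{**}=R\in\add M$ does all the work, and, notably, the corner $\End_R(I)$ never has to be analyzed.
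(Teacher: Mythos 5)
Your proposal is correct and follows essentially the same route as the paper: both realize $\End_R(M)$ as the corner $eBe$ of $B=\End_R(M\oplus I)$, transport resolutions through the idempotent (the paper via its Lemma on $\pd_B eA(1-e)$, you via the exact corner functor $e(-)$), and both hinge on the same computation that $\mathrm{grade}(I,M)\ge 2$ forces $eBf\cong\Hom_R(I,M)\cong\Hom_R(R,M)\cong M$, which is projective over $\End_R(M)$ because $R\in\add M$. The only cosmetic difference is that you invoke reflexive $\Rightarrow(\Se_2)$ over quasi-normal rings and the local characterization of grade, where the paper cites Vasconcelos's theorem that short $R$-sequences are $M$-sequences; these are equivalent.
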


As a corollary, using results by Buchweitz and Pham \cite{BuchweitzPham}, we obtain:

\begin{Cor} \label{affine}
Let $n \geq 2$, $k$ a field and $R = k[x_1,\ldots ,x_n]$ or its localization at $\mf{m} = (x_1,\ldots ,x_n)$ or $k[[x_1,\ldots,x_n]]$ and $M = \bigoplus_{i=2}^n \Omega^i(k)$. Then $\End_R M$ has finite global dimension. In particular, the depth of $\End_R M=2$, so $\End_R M$ is an NCCR only when $n=2$.
\end{Cor}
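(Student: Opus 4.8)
The plan is to realize $M=\bigoplus_{i=2}^{n}\Omega^{i}(k)$ as what is left of $\bigoplus_{i=1}^{n}\Omega^{i}(k)$ after deleting the first syzygy $\Omega^{1}(k)=\mm$, and then to transfer finiteness of global dimension through Theorem~\ref{Thm:colin}. First I would record the structure supplied by the Koszul resolution of $k$. Since $R$ is regular it is Cohen--Macaulay and Gorenstein, hence quasi-normal, and the minimal free resolution of $k$ is the Koszul complex on $x_{1},\dots,x_{n}$. From this one reads off $\Omega^{n}(k)\cong R$, so $R$ is a direct summand of $M$; moreover each $\Omega^{i}(k)$ with $i\geq 2$ is a second syzygy, hence reflexive over the quasi-normal ring $R$, so $M$ is reflexive. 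Thus $M$ meets the hypotheses of Theorem~\ref{Thm:colin}.

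Next I would take $I=\mm=\Omega^{1}(k)$. As $R$ is Cohen--Macaulay of dimension $n$ with $\mm=(x_{1},\dots,x_{n})$, the grade of $\mm$ equals $n\geq 2$, so $I$ is an ideal of grade at least $2$. By the result of Buchweitz--Pham \cite{BuchweitzPham}, the endomorphism ring $\End_{R}\!\bigl(\bigoplus_{i=1}^{n}\Omega^{i}(k)\bigr)$ has finite global dimension; since $\bigoplus_{i=1}^{n}\Omega^{i}(k)=M\oplus I$, Theorem~\ref{Thm:colin} applies directly and yields $\gl\End_{R}(M)<\infty$, which is the first assertion.

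For the depth statement I would establish the two inequalities separately. Since $M$ is reflexive over the quasi-normal ring $R$, the module $\End_{R}(M)=\Hom_{R}(M,M)$ is again reflexive, hence satisfies $(\Se_{2})$, so $\depth\End_{R}(M)\geq\min\{2,n\}=2$. For the reverse inequality, note that for $n\geq 3$ both $\Omega^{n-1}(k)$ and $\Omega^{n}(k)\cong R$ are summands of $M$, so the block $\Hom_{R}(\Omega^{n-1}(k),R)=(\Omega^{n-1}(k))^{*}$ is a direct summand of $\End_{R}(M)$. Dualizing the tail $0\to R\to R^{n}\to\Omega^{n-1}(k)\to 0$ of the Koszul resolution into $R$, and using that $R$ is Gorenstein of dimension $n$ so that $\Ext^{n}_{R}(k,R)\cong k$, identifies $(\Omega^{n-1}(k))^{*}$ with $\Omega^{2}(k)$; by the Auslander--Buchsbaum formula $\depth\Omega^{2}(k)=n-\pd\Omega^{2}(k)=n-(n-2)=2$. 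Hence $\depth\End_{R}(M)\leq 2$, and therefore equals $2$. When $n=2$ one instead has $M=\Omega^{2}(k)\cong R$ and $\End_{R}(M)=R$, again of depth $2$.

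Finally, since a NCCR is by definition a non-singular order, it must be maximal Cohen--Macaulay over $R$, i.e. of depth equal to $\dim R=n$; together with $\depth\End_{R}(M)=2$ this forces $n=2$, giving the last claim. I expect the main obstacle to lie not in applying Theorem~\ref{Thm:colin} — once the hypotheses on $M$ and $I$ are verified this is immediate — but in the depth computation, and specifically in exhibiting a summand of $\End_{R}(M)$ of depth exactly $2$. The key point there is the self-duality of the Koszul complex, which converts the cosyzygy $(\Omega^{n-1}(k))^{*}$ back into the low syzygy $\Omega^{2}(k)$ of known depth.
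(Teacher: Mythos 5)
Your argument for finiteness of the global dimension is exactly the paper's: invoke Buchweitz--Pham for $\End_R\bigl(\bigoplus_{i=1}^n\Omega^i(k)\bigr)$ and strip off the summand $I=\mm$ via Theorem~\ref{Thm:colin}; you merely make explicit the hypothesis checks ($R$ quasi-normal, $\mathrm{grade}(\mm)=n\geq 2$, $\Omega^n(k)\cong R$ a summand, reflexivity of second syzygies) that the paper leaves tacit. Where you genuinely diverge is the depth computation. The paper gets $\depth\End_RM<3$ indirectly, by citing Cor.~2.9 of Huneke--Wiegand (if $\End_RM$ were $(\Se_3)$ then $M$ would be free) together with the observation that $M$ and $\End_RM$ are locally free on the punctured spectrum, so that $(\Se_3)$ is equivalent to $\depth\geq 3$ there. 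You instead exhibit an explicit $R$-direct summand of $\End_RM$ of depth exactly $2$, namely $\Hom_R(\Omega^{n-1}(k),R)\cong\Omega^2(k)$ via the self-duality of the Koszul complex and $\Ext^n_R(k,R)\cong k$, then apply Auslander--Buchsbaum; this identification is correct (dualizing $0\to R\to R^n\to\Omega^{n-1}(k)\to 0$ gives $0\to(\Omega^{n-1}(k))^*\to R^n\to R\to k\to 0$, whose kernel is $\Omega^2(k)$). Your route is more self-contained and elementary, avoiding the Huneke--Wiegand rigidity result entirely, at the cost of using the specific Koszul structure of $M$; the paper's route is shorter given the citation and would apply to any reflexive $M$ that is locally free on the punctured spectrum and non-free. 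Both arguments share the same lower bound $\depth\End_RM\geq 2$ from reflexivity, and your final step (NCCR forces $\depth=\dim R=n$, hence $n=2$) is the intended, if unstated, conclusion of the paper.
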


\begin{proof}[Proof of Cor.~\ref{affine}]
The reason for the finite global dimension is that Buchweitz and Pham already showed that
$\End_R(\mf{m} \oplus M) = \End_R(\sum_{i=1}^n \Omega^i k)$ has finite
global dimension. The assertion about depth follows from Cor.~2.9 of \cite{HunekeWiegand}: if $\End_RM$ satisfied condition $(S_3)$, then $M$ would be free over $R$. We may assume $n>2$. Since $M$, and $\End_R M$ are locally free on the punctured spectrum, being $(S_3)$ is equivalent to $\depth(\End_R M) \geq 3$.  So we know that $\depth(\End_R M) < 3$. Since $M$ is reflexive, $\depth(\End_R M) \geq 2$. Thus $\depth(\End_R M)=2$.
\end{proof}

We need a couple of lemmas to prove theorem \ref{Thm:colin}:
The following lemma is well known and appears
in~\cite{IngallsPaquette}, and other places, but we include our own proof.

\begin{Lem} \label{Lem;idemp}
Let $A$ be a ring and $e$ an idempotent such that $\pd_B eA(1-e) < \infty$ where $B=(1-e)A(1-e)$. Then if $\gl A<\infty$, so is $\gl B$.
\end{Lem}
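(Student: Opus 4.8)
The plan is to compare the module categories of $A$ and $B=(1-e)A(1-e)$ through the pair of functors attached to the idempotent, resolving on the $A$-side (where the global dimension is finite) and transporting the resolution back to $B$ along an exact functor. Write $f=1-e$, so that $B=fAf$ has identity $f$ and $fA$ is a $(B,A)$-bimodule. I would introduce $\mathcal{S}=(-)f\colon \mathrm{Mod}\text{-}A\to\mathrm{Mod}\text{-}B$ and $\mathcal{T}=(-)\otimes_B fA\colon\mathrm{Mod}\text{-}B\to\mathrm{Mod}\text{-}A$, and first record three elementary facts. The functor $\mathcal{S}$ is exact, being the projection onto the summand $Mf$ of $M=Me\oplus Mf$. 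One has $\mathcal{S}\mathcal{T}\cong\mathrm{id}$, since $(N\otimes_B fA)f\cong N\otimes_B fAf=N\otimes_B B\cong N$. Finally, from the Peirce decomposition $A=eAe\oplus eAf\oplus fAe\oplus fAf$ one reads off $\mathcal{S}(A)=Af\cong eAf\oplus B$ as right $B$-modules.

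The hypothesis enters at the next step, the computation of $\pd_B$ on the image of projectives. Since $\mathcal{S}(A)\cong eAf\oplus B$ and $B$ is $B$-projective, we get $\pd_B\mathcal{S}(A)=\pd_B eAf=:c<\infty$ by assumption; and because $\mathcal{S}$ respects arbitrary direct sums and direct summands, every projective right $A$-module $P$ then satisfies $\pd_B\mathcal{S}(P)\le c$.

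With this in hand I would run the descent. Given an arbitrary right $B$-module $N$, form the $A$-module $\mathcal{T}(N)$ and, using $\gl A<\infty$, choose a finite projective resolution
\[
0\to P_m\to\cdots\to P_0\to \mathcal{T}(N)\to 0,\qquad m\le\gl A.
\]
Applying the exact functor $\mathcal{S}$ and using $\mathcal{S}\mathcal{T}(N)\cong N$ produces an exact sequence of $B$-modules
\[
0\to \mathcal{S}(P_m)\to\cdots\to \mathcal{S}(P_0)\to N\to 0
\]
in which every term has $\pd_B\le c$. A standard dimension-shifting estimate (splice into short exact sequences and iterate the inequality $\pd Z'\le\max(\pd Y',\pd X'+1)$ for $0\to X'\to Y'\to Z'\to 0$) then yields $\pd_B N\le c+m\le\pd_B eA(1-e)+\gl A$. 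As $N$ was arbitrary, $\gl B\le \pd_B eA(1-e)+\gl A<\infty$.

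The main obstacle is conceptual rather than computational: the naive idea of applying $\mathcal{T}$ to a projective resolution over $B$ fails, because $\mathcal{T}=(-)\otimes_B fA$ need not be exact ($fA$ need not be flat over $B$). The trick that makes the argument work is to resolve on the $A$-side, where the global dimension is finite, and transport back along the \emph{exact} functor $\mathcal{S}$, exploiting the splitting $\mathcal{S}\mathcal{T}\cong\mathrm{id}$; the hypothesis $\pd_B eA(1-e)<\infty$ is exactly what keeps the descended terms of finite projective dimension over $B$.
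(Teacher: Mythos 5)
Your proof is correct and follows essentially the same route as the paper's: both resolve $X\otimes_B(1-e)A$ over $A$, apply the exact functor $(-)(1-e)$, and use the decomposition $A(1-e)=eA(1-e)\oplus B$ together with the hypothesis to bound the projective dimension of each descended term. Your write-up merely makes the functorial framework and the dimension-shifting estimate (including the explicit bound $\gl B\le \pd_B eA(1-e)+\gl A$) more explicit than the paper does.
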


\begin{proof} Let $X$ be any right module over $B$. We need to show that $\pd_B X$ is finite. Let $Y =X\otimes_B (1-e)A \in \mmod(A)$, then we have $Y(1-e)=X$. As $\gl  A < \infty$, one has a projective resolution
$$0 \longrightarrow A_n \longrightarrow  \cdots \longrightarrow A_0 \longrightarrow Y \longrightarrow 0.$$
Multiply with $(1 - e)$ one gets a long exact sequence: 
$$0 \longrightarrow A_n (1-e) \longrightarrow  \cdots \longrightarrow A_0(1-e) \longrightarrow X \longrightarrow 0$$
in $\mmod (B)$. However, note that $A(1-e) = eA(1-e)\oplus B$, so each $A_i(1-e)$ has finite projective dimension over $B$, and thus so does $X$.
\end{proof}

\begin{Lem} Let $R$ be a commutative noetherian ring. Then for any ideal $I$ and module $M$ such that $\mathrm{grade}(I,M) \geq 2$ we have $\Hom_R(I,M)\cong \Hom_R(R,M) \cong M$.
\end{Lem}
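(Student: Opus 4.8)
The plan is to exhibit the asserted isomorphism $\Hom_R(I,M) \cong M$ as the restriction map induced by the inclusion $I \hookrightarrow R$, and to kill its kernel and cokernel using the Ext-vanishing that the grade hypothesis encodes. First I would recall the homological description of grade: $\mathrm{grade}(I,M)$ is the least integer $i$ with $\Ext^i_R(R/I,M) \neq 0$. Hence the assumption $\mathrm{grade}(I,M) \geq 2$ is precisely the pair of vanishing statements $\Hom_R(R/I,M) = \Ext^0_R(R/I,M) = 0$ and $\Ext^1_R(R/I,M) = 0$.

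Next I would apply the contravariant functor $\Hom_R(-,M)$ to the short exact sequence
$$0 \lra I \lra R \lra R/I \lra 0,$$
obtaining the opening of the associated long exact sequence of Ext:
$$0 \lra \Hom_R(R/I,M) \lra \Hom_R(R,M) \lra \Hom_R(I,M) \lra \Ext^1_R(R/I,M).$$
Substituting the two vanishings from the grade hypothesis makes both outer terms drop out, so the central map $\Hom_R(R,M) \to \Hom_R(I,M)$ is an isomorphism; by construction this map is exactly restriction of a homomorphism along $I \hookrightarrow R$. Composing with the standard evaluation isomorphism $\Hom_R(R,M) \xrightarrow{\sim} M$, $f \mapsto f(1)$, yields $\Hom_R(I,M) \cong \Hom_R(R,M) \cong M$, as claimed.

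The one substantive input is the translation of $\mathrm{grade}(I,M) \geq 2$ into the vanishing of $\Ext^i_R(R/I,M)$ for $i = 0,1$; everything afterward is just exactness of the $\Hom$--Ext sequence. I therefore expect no genuine obstacle: once the grade condition is rephrased homologically, the conclusion is immediate, and the freeness of $R$ means one never needs to look past the $\Ext^1$ term.
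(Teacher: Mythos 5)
Your argument is correct and is essentially identical to the paper's own proof: both apply $\Hom_R(-,M)$ to $0 \to I \to R \to R/I \to 0$ and use the Ext-vanishing characterization of $\mathrm{grade}(I,M)\geq 2$ (which the paper cites from Bruns--Herzog) to conclude that the restriction map $\Hom_R(R,M)\to\Hom_R(I,M)$ is an isomorphism.
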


\begin{proof}
Start with the short exact sequence $0 \lra I \lra R \lra R/I \lra 0$ and take $\Hom_R(-,M)$. The desired isomorphism follows since $\Ext^i_R(R/I,M) = 0$ for $i < 2$ (see e.g. Theorems 1.6.16 and 1.6.17 of \cite{BrunsHerzog93}).
\end{proof}

Now we prove Theorem \ref{Thm:colin}.

\begin{proof}
Let $A = \End_R(M \oplus I)$ and $e, f$ be the idempotents corresponding to the summands $I$ and $R$ respectively. Then $\End_R(M) = (1 -e)A(1- e)$, so to apply Lemma \ref{Lem;idemp} we only need to check that $\pd_BeA(1-e) < \infty$. However $eA(1-e) = fA(1-e)$ since the former is $\Hom_R(I, M)$ and the later is $\Hom_R(R, M)$. Note that here the condition of $R$ to be quasi-normal is used: one needs that an $R$-sequence of two or less elements is also an $M$-sequence, see \cite[Thm~1.4]{Vasconcelos}. But $R$ is a summand of $M$, so $fA(1-e)$ is a summand of $B$, and we are done.
\end{proof}

\subsection{NCRs for the normal crossing divisor} \label{Sub:NCRnormalcrossing}

A hypersurface in a smooth ambient space has (simple) normal crossings at a point if it is locally isomorphic to the union of coordinate hyperplanes. Here we will consider the normal crossing divisor\footnote{We tacitly assume that we take all possible hyperplanes, i.e., in an $n$-dimensional ambient space, the coordinate ring of the normal crossing divisor is $k[x_1, \ldots, x_n]/(x_1 \cdots x_n)$. So one may speak of \emph{the} normal crossing divisor.} over $k[x_1, \ldots, x_n]$ (but everything works similarly for $k[[x_1, \ldots, x_n]]$ or $k[x_1, \ldots, x_n]_{(x_1, \ldots, x_n)}$). Thus, the normal crossing divisor is given by the ring $R=k[x_1, \ldots, x_n]/(x_1 \cdots x_n)$, which is of Krull-dimension $n-1$. \\  
In order to obtain a NCR of a normal crossing divisor, one can apply Lemma \ref{Lem:ncrComp} together with Corollary \ref{affine}. Namely, let $R=k[x_1, \ldots, x_n]/(x_1 \cdots x_n)$. Take a NCR given by a module $M_i$ over each $R_i=k[x_1, \ldots, x_n]/(x_i)$. Then by Lemma \ref{Lem:ncrComp}, $M=\bigoplus_{i=1}^nM_i$ gives a NCR for $R$. So one can take e.g. the reflexive module from Cor.~\ref{affine} as $M_i$, since each $R_i$ is smooth, or $M_i=R_i$, which yields $M=\bigoplus_{i=1}^n M_i$ with NCR $\End_R M\cong \bigoplus_{i=1}^n R_i \cong \widetilde R$, the normalization. \\

However, below we construct a different NCR of the normal crossings singularity, and compute its global dimension. 

\begin{Thm}\label{thmsnc}
Let $R=k[x_1,\ldots,x_n]/(x_1\cdots x_n)$ and let $M = \bigoplus_{I \subseteq [n]} R/\left( \prod_{i \in I} x_i \right)$.
Then $\End_R(M)$ is a NCR of $R$ with global dimension $n$.
Furthermore, let $K \subseteq [n]$, where $[n]=\{1, \ldots, n\}$, and let $$M = \bigoplus_{\stackrel{I
  \subseteq [n]}{I \neq K}} \frac{R}{\left( \prod_{i \in I} x_i \right)}$$
then $\End_RM$ is a NCR of $R$ of global dimension $\leq 2n-1$.
\end{Thm}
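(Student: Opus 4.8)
The plan is to deduce the bound from the first part of the theorem, where $\gl \End_R(M_{\mathrm{full}}) = n$ for the full module $M_{\mathrm{full}} = \bigoplus_{I \subseteq [n]} R/\left(\prod_{i\in I}x_i\right)$, by removing the single summand indexed by $K$ through an idempotent argument. Write $A = \End_R(M_{\mathrm{full}})$, let $e \in A$ be the idempotent projecting onto the summand $N := R/\left(\prod_{i \in K}x_i\right)$, and note that $B := (1-e)A(1-e) = \End_R(M)$ is exactly the endomorphism ring in question. First I would record that $M$ still has full support: removing one summand cannot drop a component, since $n\ge 2$ forces each hyperplane $\{x_i=0\}$ to lie in the support of some remaining summand (of $R=R/(\prod_{i\in[n]}x_i)$ when $[n]\neq K$, and of a singleton quotient $R/(x_i)$ otherwise). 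Hence $\End_R(M)$ is an NCR as soon as $\gl B<\infty$, and it suffices to bound $\gl B$.

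The first step is a quantitative version of Lemma~\ref{Lem;idemp}: I claim $\gl B \le \gl A + \pd_B\bigl(eA(1-e)\bigr)$. This is exactly the mechanism in the proof of that lemma. For a right $B$-module $X$, set $Y = X\otimes_B(1-e)A$, so that $Y(1-e)=X$; resolving $Y$ over $A$ in length $\le \gl A = n$ and multiplying by $(1-e)$ yields an exact sequence of $B$-modules whose terms are the $A_i(1-e)$. Since $A(1-e) = eA(1-e)\oplus B$ as right $B$-modules, each term is a direct sum of copies of $B$ and of $eA(1-e)$, so $\pd_B X \le n + \pd_B\bigl(eA(1-e)\bigr)$. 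Thus everything reduces to proving $\pd_B\bigl(eA(1-e)\bigr) \le n-1$, which gives $\gl B \le 2n-1$.

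For this bound I would exploit the simple $A$-module $S_K$ sitting at the removed vertex. Its projective cover is $eA$, and applying the exact functor $(-)(1-e)$ to a minimal $A$-projective resolution $0\to P^{(n)}\to\cdots\to P^{(1)}\to eA\to S_K\to 0$ gives an exact sequence
$$0 \to P^{(n)}(1-e) \to \cdots \to P^{(1)}(1-e) \to eA(1-e) \to 0,$$
because $S_K(1-e)=0$ (the simple is concentrated at vertex $K$). This exhibits $eA(1-e)$ with a resolution of length $\le n-1$ by the modules $P^{(j)}(1-e)$, $j\ge 1$; the drop by one is precisely what produces $2n-1$ rather than $2n$. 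Now $P^{(j)}$ is a direct sum of the indecomposable projectives $e_I A$, and $e_I A(1-e)=e_I B$ is $B$-projective for every $I\neq K$, so the only obstruction to $P^{(j)}(1-e)$ being $B$-projective is the multiplicity of $e_K A$ in $P^{(j)}$, that is, $\dim_k\Ext^j_A(S_K,S_K)$.

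Hence the crux, and the step I expect to be the main obstacle, is to show $\Ext^j_A(S_K,S_K)=0$ for all $j\ge 1$, i.e. that $S_K$ has no positive self-extensions. The degree-one vanishing is immediate because the quiver of $A$ has no loops: between distinct vertices $I$ and $I\cup\{p\}$ there are arrows in both directions, but no arrow from a subset to itself. For the higher degrees I would appeal to the explicit (Koszul) relation structure of $A$ worked out in the first part: the defining relations are the commutativity relations on the $n$-cube together with the ``up-then-down'' identifications, each of which connects two \emph{distinct} subsets, so no minimal relation—and, more generally, no higher syzygy generator—returns to the vertex $K$. Granting this, every $P^{(j)}(1-e)$ with $j\ge1$ is $B$-projective, so $\pd_B\bigl(eA(1-e)\bigr)\le n-1$ and $\gl B \le 2n-1$. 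As a consistency check, for $n=2$ and $K$ a singleton one recovers $B=\End_R(R\oplus N_-)$ for the node, whose global dimension is $3=2n-1$ by Proposition~\ref{Prop:gsnode}; there the bound is sharp and the self-extension vanishing indeed holds, since the relations $ca=bd=0$ join the two outer vertices rather than returning to $K$.
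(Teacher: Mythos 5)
Your argument for the second half of the theorem is correct, and it is essentially the mechanism the paper uses: the paper's one-line justification (``since each projective only appears once in the resolution of a given simple module, \ldots we can replace $B_nf=P_K$ with its projective resolution over $B_n$, which will not involve $P_K$'') is exactly your combination of a quantitative Lemma~\ref{Lem;idemp} with the observation that $P_K$ occurs only in homological degree $0$ of the minimal resolution of $S_K$. Indeed your key vanishing $\Ext^j_A(S_K,S_K)=0$ for $j\geq 1$ is immediate from the paper's explicit linear resolution, whose $j$-th term is $\bigoplus_{d(K,J)=j}P_J$ with $d(K,K)=0$ (equivalently, from the Koszul dual Hilbert series $(t^{d(I,J)})_{I,J}$ having $1$ on the diagonal); your appeal to ``no syzygy generator returns to $K$'' is a correct, if informal, paraphrase of this. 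You even make the $2n-1$ bound more explicit than the paper's final proposition, which only asserts finiteness.

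The genuine gap is that you prove only half of the statement. The theorem also asserts that $\End_R(M)$ for the full module $M=\bigoplus_{I\subseteq[n]}R/(\prod_{i\in I}x_i)$ is an NCR of global dimension exactly $n$, and your proposal takes this as given (``deduce the bound from the first part of the theorem''). In the paper this first part is where all the work lies: one constructs the Koszul order $\Lambda_n=\bigotimes_{i=1}^nk(\square^1)$, identifies it with the path algebra of the $n$-cube modulo commutativity relations and with an explicit order in $k[x_1,\ldots,x_n]^{2^{[n]}\times 2^{[n]}}$, computes its Hilbert series and the linear resolutions of its simples, checks that $\Lambda_ne_\emptyset\Lambda_n$ is projective, and then invokes the Auslander--Platzeck--Todorov theorem to identify $B_n=\Lambda_n/\Lambda_ne_\emptyset\Lambda_n$ with $\End_R(M)$ and to transfer the resolutions. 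Your second-half argument quietly depends on the outputs of this construction (global dimension $n$, the no-loop quiver, the non-recurrence of $P_K$, full support of $M$), so without it the proof of the stated theorem is incomplete rather than wrong. To close the gap you would need to supply the Koszul-algebra construction, or some substitute computation of $\gl\End_R(M)=n$ together with the structure of the minimal resolutions of the simples.
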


Note that the NCR given by $M = \bigoplus_{I \subseteq [n]} R/\left( \prod_{i \in I} x_i \right)$ in the above theorem is never crepant, since its global dimension is $n > \dim R=n-1$. \\
The proof will be established after studying the intermediate algebra $\Lambda_n$ which we construct below.
 Consider the quiver $\square^1$ given by a two cycle with two arrows
 and two vertices: 
 \[
\begin{tikzpicture}
\node at (4,0) {\begin{tikzpicture} 
\node (C1) at (0,0)  {$\cdot$};
\node (C2) at (1.75,0)  {$\cdot$};

\draw [->,bend left=20,looseness=1,pos=0.5] (C1) to node[]  {} (C2);
\draw [->,bend left=20,looseness=1,pos=0.5] (C2) to node[] {} (C1);
\end{tikzpicture}};
\end{tikzpicture}
\] 
 We write $\square^n$ for the quiver given by the 1-skeleton of the
 $n$ dimensional cube with arrows in both directions.  Alternatively $\square^n$ is
 the Hasse diagram of the lattice of subsets of $[n]$.
 We will index vertices of this quiver by these subsets.  So the set of
 vertices is $2^{[n]} = \{
 I \subseteq [n] \}.$  We define a metric $d(I,J)$ on $2^{[n]}$ to be the minimal number of insertions and deletions of single elements required to move from $I$ to $J$.
 Let  $\Lambda_n = \bigotimes_{i=1}^n k(\square^1)$ be the tensor
 product of the path algebra of $\square^1$. Given subsets $I,J \subseteq [n]$, we will write $e_{I,J}$ for the matrix unit in  
$$k[x_1,\ldots,x_n]^{2^{[n]} \times 2^{[n]}}.$$
So $e_{I,J}$ is the matrix with a one in the $(I,J)$ position and zeroes elsewhere.  We will also abbreviate the primitive idempotent $e_{I,I}$ as $e_I$.
Several properties of $\Lambda_n$ are described below:
 \begin{Prop}
 The algebra $\Lambda_n = \bigotimes_{i=1}^n k(\square^1)$ is isomorphic to:
 \begin{enumerate}[leftmargin=*]
 \item the path algebra $k(\square^n)/\mc{R}$ with relations $\mc{R}$ that every
   square commutes.
 \item  the order $$\left(\prod_{i \in J \setminus I} x_ik[x_1,\ldots,x_n]\right)_{I,J} \subset
  k[x_1,\ldots,x_n]^{2^{[n]} \times 2^{[n]}}.$$ 
 \end{enumerate}
\end{Prop}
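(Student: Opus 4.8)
The plan is to establish the two isomorphisms separately, in each case reducing to the one-factor case $n=1$ and then propagating through the tensor product.

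For part (1), I would invoke the standard description of a tensor product of path algebras: if $Q$ and $Q'$ are finite quivers, then $kQ\otimes_k kQ'$ is isomorphic to the path algebra of the box product quiver $Q\,\square\,Q'$ modulo the relations of $kQ$ and $kQ'$ together with the relations forcing every elementary square (one arrow from each factor) to commute. Applying this $n$ times, and using that $k(\square^1)$ is a free path algebra (no relations), one finds that $\Lambda_n=\bigotimes_{i=1}^n k(\square^1)$ is the path algebra of the $n$-fold box product $\square^1\,\square\,\cdots\,\square\,\square^1$ modulo only the commuting-square relations. Since the box product of $n$ copies of $\square^1$ is exactly the $1$-skeleton of the $n$-cube with arrows in both directions---equivalently the Hasse diagram of $2^{[n]}$---this is precisely $k(\square^n)/\mc{R}$, which gives (1).

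For part (2) I would first settle $n=1$ by an explicit calculation. Label the vertices of $\square^1$ by $\emptyset$ and $\{1\}$, with the ``up'' arrow $a\colon\emptyset\to\{1\}$ and the ``down'' arrow $b\colon\{1\}\to\emptyset$, and define a $k$-algebra map by $e_\emptyset\mapsto e_{\emptyset,\emptyset}$, $e_{\{1\}}\mapsto e_{\{1\},\{1\}}$, $a\mapsto x_1 e_{\emptyset,\{1\}}$, $b\mapsto e_{\{1\},\emptyset}$. A direct check on the $k$-basis of $k(\square^1)$ given by the directed paths $e_\emptyset,e_{\{1\}},a,b,ab,ba,\dots$ shows the map is injective with image the order whose $(\emptyset,\{1\})$-entry is $x_1k[x_1]$ and all of whose other entries are $k[x_1]$; this is the $n=1$ instance of (2). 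I would then tensor up: writing $\Gamma^{(i)}\subset k[x_i]^{2\times 2}$ for the $i$-th such order, one has $\Lambda_n\cong\bigotimes_{i=1}^n\Gamma^{(i)}$, realised as a subalgebra of $\bigotimes_{i=1}^n k[x_i]^{2\times 2}=k[x_1,\dots,x_n]^{2^{[n]}\times 2^{[n]}}$, in which the matrix unit $e_{I,J}$ corresponds to the tensor of the matrix units $e_{I_i,J_i}$ in the individual factors. Computing blockwise, the $(I,J)$-entry of $\bigotimes_i\Gamma^{(i)}$ is $\bigotimes_i(\Gamma^{(i)})_{I_i,J_i}$, and the $i$-th factor contributes $x_i$ exactly when $i\notin I$ and $i\in J$, that is when $i\in J\setminus I$. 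Hence the $(I,J)$-entry is $\left(\prod_{i\in J\setminus I}x_i\right)k[x_1,\dots,x_n]$, which is the order in (2).

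The genuinely routine parts are the finite $n=1$ verification and the entrywise bookkeeping in the tensor product; closure of the displayed order under multiplication is automatic from the tensor-of-subalgebras description, though it can also be checked by hand via $L\setminus I\subseteq(J\setminus I)\sqcup(L\setminus J)$. The step I expect to be the main obstacle is setting up the tensor-product-of-quivers statement in (1) with the correct orientation and indexing conventions---in particular identifying the box product of the $\square^1$'s with $\square^n$ as the Hasse diagram of $2^{[n]}$ and confirming that the only surviving relations are the commuting squares. Once those conventions are pinned down, both isomorphisms follow formally from the $n=1$ case.
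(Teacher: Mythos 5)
Your proposal is correct and is essentially the argument the paper gives: the explicit $n=1$ isomorphism you describe is exactly the displayed identification $k(\square^1)\simeq\left(\begin{smallmatrix} k[x] & xk[x] \\ k[x] & k[x]\end{smallmatrix}\right)$, and the paper itself offers "tensor this up" as its (alternate) proof of (2), while your box-product description of $kQ\otimes kQ'$ is just the general form of the paper's direct assignment $I\to I\cup\{j\}\mapsto x_je_{I,I\cup\{j\}}$, $I\to I\setminus\{j\}\mapsto e_{I,I\setminus\{j\}}$ with inverse sending $x_1^{k_1}\cdots x_n^{k_n}e_{I,J}$ to a shortest path decorated with loops. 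No gaps; your bookkeeping that the $i$-th factor contributes $x_i$ exactly when $i\in J\setminus I$ matches the stated order.
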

\begin{proof}
The second description is immediate from the definition.
An arrow can be interpreted as removing or adding a single element to a set.
We map the arrow $I \rightarrow I \cup \{j\}$ to $x_j e_{I,I\cup \{
  j\} }$ and $I \rightarrow I \setminus \{ j \}$ to $e_{I,I\setminus
  \{ j \}}.$
The inverse map can be described as mapping the monomial $x_1^{k_1}\cdots x_n^{k_n}e_{I,J}$ to a choice of shortest path from $I$ to $J$ composed with loops that
add and remove $i$ the appropriate number of times $k_i$.  An
alternate proof is to verify that 
$$k (\square^1) \simeq \begin{pmatrix}[rr] k[x] & xk[x] \\ k[x] &
k[x] \end{pmatrix} \subset k[x]^{2 \times 2}$$
via the isomorphism described above.  Next one can show that the
tensor product of this isomorphism yields the above description.
\end{proof}

For example, the order $\Lambda_2$ is 
\begin{align*}k (\square^2)/\mc{R} & \simeq  \begin{pmatrix}[rr] k[x_1] & x_1k[x_1] \\ k[x_1] &
k[x_1] \end{pmatrix} \otimes \begin{pmatrix}[rr] k[x_2] & x_2k[x_2] \\ k[x_2] &
k[x_2] \end{pmatrix}   \\
& \simeq  \begin{pmatrix}[rrrr] 
k[x_1,x_2] & x_1k[x_1,x_2] & x_2k[x_1,x_2] & x_1x_2k[x_1,x_2] \\
k[x_1,x_2] & k[x_1,x_2] & x_2k[x_1,x_2] & x_2k[x_1,x_2] \\
k[x_1,x_2] & x_1k[x_1,x_2] & k[x_1,x_2] & x_1k[x_1,x_2] \\
k[x_1,x_2] & k[x_1,x_2] & k[x_1,x_2] & k[x_1,x_2] 
\end{pmatrix}
 \subset k[x_1,x_2]^{4 \times 4}
 \end{align*}
 
We next need to establish some good properties that $\Lambda_n$ satisfies.  In
particular we will be using Koszul algebras as described in~\cite{BGSKoszul}.
 
 \begin{Prop} $\Lambda_n$ satisfies:
 \begin{enumerate}[leftmargin=*]
 \item $\Lambda_n$ is Koszul.
\item If we grade $\Lambda_n$ by path length then $H_{\Lambda_n}(t)=\frac{1}{(1-t^2)^n} ( t^{d(I,J)})_{I,J}$
\item Let $e$ be a primitive idempotent, then $\Lambda_ne\Lambda_n$ is projective as a left $\Lambda_n$ module.
\end{enumerate}
\end{Prop}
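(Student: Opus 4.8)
The plan is to exploit the tensor decomposition $\Lambda_n=\bigotimes_{i=1}^n A$ with $A:=k(\square^1)$, reducing all three claims to the single factor $A$ together with the fact that each property is preserved by $\otimes_k$. I write the two primitive idempotents of $A$ as $e_\emptyset,e_{\{1\}}$, and note that a primitive idempotent of $\Lambda_n$ has the form $e_K=\bigotimes_i e^{(i)}$ with $e^{(i)}=e_{\{i\}}$ exactly when $i\in K$. The three reductions are then: (1) $A$ is Koszul and a tensor product over $k$ of Koszul algebras is again Koszul; (2) the matrix-valued Hilbert series is multiplicative, so $H_{\Lambda_n}$ is the Kronecker product of the factors $H_A$; (3) the two-sided ideal factors as $\Lambda_n e_K\Lambda_n=\bigotimes_i\bigl(Ae^{(i)}A\bigr)$, so it suffices to show $AeA$ is left-projective over $A$ for each primitive $e$ and that a tensor product of projective modules is projective.

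For the factor $A$ I would use the explicit description $A\cong\begin{pmatrix}[rr]k[x]&xk[x]\\k[x]&k[x]\end{pmatrix}$ from the previous proposition. Koszulness is immediate: $A$ is the path algebra of $\square^1$ with no relations, hence hereditary, so its semisimple part $A_0=k\times k$ admits the linear projective resolution $0\to A\otimes_{A_0}A_1\to A\to A_0\to 0$. For the Hilbert series I would grade by path length, being careful that a single arrow has degree $1$ while the loop $ab=x$ has degree $2$; thus $e_\emptyset Ae_\emptyset=e_{\{1\}}Ae_{\{1\}}=k[x]$ contribute $\frac{1}{1-t^2}$, whereas $e_\emptyset Ae_{\{1\}}$ and $e_{\{1\}}Ae_\emptyset$ are each generated by a single arrow and contribute $t+t^3+\cdots=\frac{t}{1-t^2}$, giving $H_A=\frac{1}{1-t^2}\begin{pmatrix}[rr]1&t\\t&1\end{pmatrix}=\frac{1}{1-t^2}(t^{d(I,J)})_{I,J}$. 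Finally, a direct computation of the ideal shows $AeA\cong(Ae)^2$ as left $A$-modules (each ``column'' of $AeA$ is a copy of the indecomposable projective $Ae$), hence $AeA$ is projective.

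To globalize I would invoke compatibility of $\otimes_k$ with all three structures, the key point being that under the isomorphism $\Lambda_n\cong k(\square^n)/\mc{R}$ the path-length grading coincides with the total tensor grading, since each arrow of $\square^n$ is an arrow of one factor tensored with identities. Granting this, Koszulness passes to $\bigotimes_i A$ by the standard tensor-product property of Koszul algebras (see \cite{BGSKoszul}); the Hilbert series of a tensor product is the Kronecker product of the factor series, and since the number of coordinates in which two vertices $I,J$ differ equals $|I\triangle J|=d(I,J)$, the product collapses to $\frac{1}{(1-t^2)^n}(t^{d(I,J)})_{I,J}$; and a tensor product of projectives is projective, so $\Lambda_n e_K\Lambda_n=\bigotimes_i(Ae^{(i)}A)$ is projective over $\Lambda_n$. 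I expect the main obstacle to be precisely this identification of the path-length grading with the tensor grading in (2) — the polynomial degree and the path-length degree differ because an \emph{addition} of an element contributes a factor $x_i$ yet only unit length — together with verifying in (3) that the multiplication map $\Lambda_n e_K\otimes_{e_K\Lambda_n e_K}e_K\Lambda_n\to\Lambda_n e_K\Lambda_n$ is an isomorphism; after the tensor factorization the latter reduces to injectivity for $A$, which holds because on each graded block it is multiplication by a nonzero element of the domain $k[x]$.
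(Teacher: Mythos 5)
Your proposal is correct and follows essentially the same route as the paper: Koszulness of $\Lambda_1$ plus the tensor-product theorem for Koszul algebras, and the Kronecker-product formula for Hilbert series with the identification $d(I,J)=|I\triangle J|$, including the correct path-length grading in which $x$ has degree $2$ on the diagonal and the off-diagonal generators have degree $1$. For (3) the paper computes directly in the matrix order that $\Lambda_ne\Lambda_n=(\Lambda_ne)(e\Lambda_n)=\bigoplus_{J}x^J\Lambda_ne\cong(\Lambda_ne)^{\oplus 2^n}$, whereas you reduce via the factorization $\Lambda_ne_K\Lambda_n=\bigotimes_i\bigl(Ae^{(i)}A\bigr)$ to the case $n=1$; both arguments rest on the same structural fact and your tensor factorization is valid, so this is only a cosmetic difference.
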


\begin{proof}
The Koszul property is immediate since $\Lambda_1$ is a path algebra
with no extra relations, and $\Lambda_n$ is a tensor product of Koszul algebras, Theorem 3.7~\cite{GreenMV96}.
We associate the subset $I \subseteq [n]$ to the vector $\vec{I}=(i_1,\ldots,i_n)\in 
\F_2^n$ where $i_k=1$ if $k \in I$ and 0 otherwise.  
Observe the following identity:
\begin{equation}\label{combid}\left( \bigotimes_{i=1}^n \begin{pmatrix} 1 & t_i \\ t_i & 1 \end{pmatrix}\right)_{I,J} =
t_1^{i_1+j_1}\cdots t_n^{i_n+j_n}.\end{equation}
Write $|\vec{I}|$ for the number of non-zero entries of $\vec{I} \in \F_2^n.$
The Hilbert series can be computed as the tensor product of the
Hilbert series of the path algebra $k(\square^1)$ using the above
identity evaluated at $t_i=t$ and using the facts:
\begin{align*} d(I,J)& =|\vec{I}+\vec{J}|, \\
H_{\Lambda_1}(t) & =\frac{1}{1-t^2} \begin{pmatrix} 1 & t \\ t & 1 \end{pmatrix}, \\
H_{A \otimes B}(t)  & = H_A(t) \otimes H_B(t).
\end{align*}

Lastly, to check that $\Lambda_ne\Lambda_n$ is projective, we may choose any primitive idempotent due to the symmetry.  So let $e=e_\emptyset$.  The module 
$\Lambda_ne\Lambda_n = (\Lambda_n e)(e\Lambda_n)$ is the outer product of the first row and first column.  So we get that 
$$\Lambda_ne\Lambda_n = \bigoplus_{J \subseteq [n]} \Lambda_ne\Lambda_ne_J = 
\bigoplus_{J \subseteq [n]}
x^J\Lambda_ne \simeq  \bigoplus_{J \subseteq [n]} \Lambda_ne  \simeq (\Lambda_ne)^{\oplus 2^n}.$$

\end{proof}
\begin{Cor} The algebra $\Lambda_n$ enjoys the following properties:
\begin{enumerate}[leftmargin=*]
\item $\Lambda_n$ has global dimension $n$
 \item $\Lambda_n$ is homologically homogeneous.
\item The simple module at the primitive idempotent $e_I$ has the following resolution.
 $$ 0 \longleftarrow S_I \longleftarrow P_I \longleftarrow \bigoplus_{\substack{J \subseteq [n]\\
 d(I,J)=1}} P_J \longleftarrow \bigoplus_{\substack{J \subseteq [n]\\
 d(I,J)=2}} P_J \longleftarrow \cdots$$
\end{enumerate}
\end{Cor}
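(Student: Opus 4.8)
The plan is to deduce all three statements from the tensor decomposition $\Lambda_n = \Lambda_1^{\otimes n}$ together with the hereditariness of $\Lambda_1 = k(\square^1)$, and then to read off homological homogeneity from Proposition~\ref{Prop:nonsingularOrderisHomHom}. I would prove (3) first. Over a single factor, since $\square^1$ carries no relations, $\Lambda_1$ is a path algebra and hence hereditary, so each of its two simples $S_v$ (at a vertex $v \in \{0,1\}$) has minimal projective resolution
$$0 \to P_{v'} \to P_v \to S_v \to 0,$$
where $v'$ is the other vertex; equivalently the degree-$i$ term is $\bigoplus_{d(v,w)=i} P_w$. Identifying the simple $\Lambda_n$-module $S_I$ at the vertex $I=(i_1,\ldots,i_n)$ with $S_{i_1}\otimes_k\cdots\otimes_k S_{i_n}$, I form the tensor product of these one-factor resolutions. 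Over a field this tensor-product complex is again a resolution (by K\"unneth its homology as a complex of $k$-vector spaces is $\bigotimes_k S_{i_k}=S_I$, concentrated in degree $0$), its terms $P_{i_1+\epsilon_1}\otimes\cdots\otimes P_{i_n+\epsilon_n}$ with $\epsilon_k\in\{0,1\}$ are the projectives $P_J$ where $J$ is obtained from $I$ by flipping the coordinates with $\epsilon_k=1$, and such a $J$ lies in homological degree $\sum_k\epsilon_k=d(I,J)$. Collecting terms yields exactly the asserted complex, and it is minimal since every differential is built from arrows and so has entries in $\rad\Lambda_n$ (equivalently the resolution is linear, $\Lambda_n$ being Koszul).

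For (1), the largest distance $d(I,J)$ with $J\subseteq[n]$ is $n$, attained by the unique vertex $J=[n]\setminus I$; since $P_{[n]\setminus I}\neq 0$, the resolution in (3) has length exactly $n$, so $\pd_{\Lambda_n}S_I=n$ for every $I$ and thus $\gl\Lambda_n\geq n$. For the reverse inequality I would use the standard bound $\gl(A\otimes_k B)\leq \gl A+\gl B$ for algebras over a field together with $\gl\Lambda_1=1$, giving $\gl\Lambda_n\leq n$. Hence $\gl\Lambda_n=n$.

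For (2), I would verify that $\Lambda_n$ is a non-singular $R$-order for $R=k[x_1,\ldots,x_n]$ and then invoke Proposition~\ref{Prop:nonsingularOrderisHomHom}, noting that $R$ is regular and equicodimensional. From the matrix description, each entry $\prod_{i\in J\setminus I}x_i\cdot R$ is a principal ideal generated by a monomial, hence free of rank one over $R$; therefore $\Lambda_n$ is free, so maximal Cohen--Macaulay, over $R$, and this persists after localizing at any prime $\mf{p}$. Moreover $\gl\Lambda_{n,\mf{p}}\leq\gl\Lambda_n=n<\infty$ by (1). Now the depth count in the proof of Proposition~\ref{Prop:nonsingularOrderisHomHom} applies verbatim to the local, module-finite, MCM algebra $\Lambda_{n,\mf{p}}$ of finite global dimension: every simple $\Lambda_{n,\mf{p}}$-module has depth $0$ over $R_{\mf{p}}$ and a finite projective resolution by MCM modules, so counting depth forces its projective dimension to equal $\dim R_{\mf{p}}$; hence $\gl\Lambda_{n,\mf{p}}=\dim R_{\mf{p}}$ for all $\mf{p}$. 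This is exactly the definition of a non-singular order, and the cited proposition then gives that $\Lambda_n$ is homologically homogeneous.

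The main obstacle is the global-dimension computation at the non-maximal primes in (2): the resolution of (3) only controls the simples supported at the origin, so one cannot read off $\pd$ directly there. The resolution is that finiteness of global dimension (from (1)) together with the MCM property allows the depth count of Proposition~\ref{Prop:nonsingularOrderisHomHom} to pin down $\gl\Lambda_{n,\mf{p}}=\dim R_{\mf{p}}$ uniformly across $\Spec R$.
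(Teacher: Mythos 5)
Your treatment of (3) and (1) is correct but follows a genuinely different route from the paper. The paper works entirely inside the Koszul duality framework: it computes the Hilbert series of the Koszul dual via $H_{\Lambda_n}(t)H_{\Lambda_n^!}(-t)=1$ and the identity $H_{\Lambda_n^!}(t)=(t^{d(I,J)})_{I,J}$, and then reads the resolution of each $S_I$ off the standard Koszul complex $A\otimes (A^!_\bullet)^*$ multiplied by $e_I$. You instead exploit the factorization $\Lambda_n=\Lambda_1^{\otimes n}$ directly: hereditariness of $\Lambda_1$ gives the two-term resolutions $0\to P_{v'}\to P_v\to S_v\to 0$, and K\"unneth over $k$ assembles them into the asserted complex, with the indexing by $d(I,J)$ falling out of the bookkeeping of flipped coordinates. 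This is more elementary and arguably more transparent; what the paper's route buys is that the same Hilbert-series computation simultaneously certifies Koszulness data used later (for $B_n$), whereas your route needs the separate standard inputs that the tensor product of minimal (graded) resolutions is minimal and that $\gl(A\otimes_k B)\le\gl A+\gl B$ for these split graded algebras. Both give $\pd S_I=n$ for every $I$ and hence $\gl\Lambda_n=n$.

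For (2) there is a genuine gap, and it sits exactly where you located "the main obstacle." The depth count in Proposition~\ref{Prop:nonsingularOrderisHomHom} only yields the inequality $\pd_{\Lambda_{n,\mf{p}}}S\ge\depth_{R_\mf{p}}\Lambda_{n,\mf{p}}=\dim R_\mf{p}$ for a simple $S$; it cannot "force equality." In that proposition the matching upper bound comes from the \emph{hypothesis} $\gl A_\mf{p}=\dim R_\mf{p}$, which is precisely what you are trying to prove. What you actually have at a non-maximal prime $\mf{p}$ of height $h<n$ is $h\le\gl\Lambda_{n,\mf{p}}\le n$, and no general principle closes this: a module-finite algebra that is MCM (even free) over a regular local ring and has finite global dimension need not have global dimension equal to $\dim R$ — tiled orders over a discrete valuation ring furnish examples of finite global dimension strictly greater than $1$. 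So "finite global dimension $+$ MCM $\Rightarrow$ non-singular order" is false, and your final paragraph's proposed resolution does not work. To repair the argument one must use the specific structure of $\Lambda_n$ away from the origin: inverting the variables $x_i\notin\mf{p}$ turns the corresponding tensor factors $k(\square^1)$ into full $2\times 2$ matrix algebras, so $\Lambda_{n,\mf{p}}$ is Morita equivalent to a localization of $\Lambda_j\otimes_k k[x_{i}: x_i\notin\mf{p}]$ with $j=\#\{i: x_i\in\mf{p}\}$, and then the resolutions of (3) for $\Lambda_j$ together with regularity of the polynomial factor give the upper bound $\gl\Lambda_{n,\mf{p}}\le\dim R_\mf{p}$ (and, at maximal ideals, that all simples with a fixed central annihilator have projective dimension $n$, which is what homological homogeneity requires). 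The paper's own one-line justification of (2) is also silent on the simples supported away from the origin, but it does not rest on the invalid implication above.
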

 \begin{proof}
The Hilbert series of the Koszul dual $\Lambda_n^!$ is determined by $H_{\Lambda_n}(t) H_{\Lambda_n^!}(-t)=1$.  Furthermore, we see that 
$$H_{\Lambda_1^!}(t)=\begin{pmatrix} 1 & t \\ t & 1 \end{pmatrix}.$$
So 
$$H_{\Lambda_n^!}(t) = H_{\Lambda_n}(-t)^{-1} 
= \left( \bigotimes_{i=1}^n H_{\Lambda_1}(-t)\right)^{-1}
=  \bigotimes_{i=1}^n H_{\Lambda_1}(-t)^{-1}.$$
which we compute to be  $H_{\Lambda_n^!}(t)=( t^{d(I,J)})_{I,J}$ 
by the combinatorial identity~(\ref{combid}) of the above proof.
Any Koszul algebra $A$ has a resolution of the form
$$ 0 \longleftarrow A_0 \longleftarrow A \otimes (A^!_0)^{*} \longleftarrow A \otimes (A^!_1)^* \longleftarrow A \otimes (A^!_2)^* \longleftarrow \cdots $$
This resolution is a sum of the resolutions of the simple modules at 
the vertices.  We multiply the resolution on the left by the 
primitive idempotent $e_I$, to obtain a resolution of the simple right module 
$S_I=e_I(\Lambda_n)_0$.  The Hilbert series of $\Lambda^!$ shows that each simple
has projective dimension $n$ and has a resolution of the above form.
\end{proof}

 We now pass to the algebra of interest.  Let $e=e_\emptyset$ be the
 idempotent at the empty set.  Set $S=k[x_1,\ldots,x_n]$ and write $x^I = \prod_{i \in
   I} x_i$.  

The following proposition establishes Theorem \ref{thmsnc}.

 \begin{Prop}  Let $e$ be a primitive idempotent of $\Lambda_n$.
 Let $B_n=\Lambda_n/\Lambda_n e\Lambda_n$, then $B_n$ is isomorphic
 to $$\End_{S/(x_1\cdots x_n)}  \left( \bigoplus_{
       I \subset [n]} S/(x^{I})\right).$$
Also, $B_n$ is Koszul and has global dimension $n$.  Furthermore, if
$f$ is a primitive idempotent of $B_n$, then $(1-f)B_n(1-f)$ has
finite global dimension. 
 \end{Prop}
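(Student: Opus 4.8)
The plan is to prove the four assertions in turn, leaning throughout on two structural facts already in hand: that $\Lambda_n$ is the order whose $(I,J)$-block is $x^{J\setminus I}S$, and that $\Lambda_n e\Lambda_n$ is projective as a one-sided module. For the isomorphism I would take $e=e_\emptyset$ and write down the natural algebra homomorphism $\Phi\colon \Lambda_n\to\End_R\big(\bigoplus_{I\subseteq[n]}S/(x^I)\big)$ sending the block element $x^{J\setminus I}s$ to the $R$-linear map $S/(x^I)\to S/(x^J)$ given by multiplication by $x^{J\setminus I}s$. This is well defined because $x^{J\setminus I}x^I=x^{I\cup J}\in(x^J)$ and $(x_1\cdots x_n)\subseteq(x^J)$, and it is visibly multiplicative. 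A block-by-block computation then finishes everything: the $(I,J)$-block of $\Lambda_n e_\emptyset\Lambda_n$ equals $(\Lambda_n e_\emptyset)_{I,\emptyset}\,(e_\emptyset\Lambda_n)_{\emptyset,J}=S\cdot x^JS=x^JS$, which is exactly the kernel of $\Phi$ on that block, while the image is all of $\Hom_R(S/(x^I),S/(x^J))\cong S/(x^{I\cap J})$. Hence $\Phi$ is surjective with kernel $\Lambda_n e_\emptyset\Lambda_n$, so $B_n\cong\End_R\big(\bigoplus_I S/(x^I)\big)$ (the $I=\emptyset$ summand being zero), and by the symmetry remark any primitive $e$ yields the same $B_n$.

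For Koszulity and the global dimension I would exploit that $J=\Lambda_n e_\emptyset\Lambda_n$ is in fact a \emph{stratifying} ideal: since $e_\emptyset\Lambda_n e_\emptyset=S$ and both $\Lambda_n e_\emptyset$ and $e_\emptyset\Lambda_n$ are free over $S$, the multiplication $\Lambda_n e_\emptyset\otimes_S e_\emptyset\Lambda_n\to J$ is an isomorphism and $\mathrm{Tor}^S_{>0}(\Lambda_n e_\emptyset,e_\emptyset\Lambda_n)=0$. For a stratifying ideal generated by a degree-$0$ idempotent the quotient map is a homological epimorphism, so by the theory of Cline--Parshall--Scott and Auslander--Platzeck--Todorov one gets $\mathrm{Ext}^i_{B_n}(X,Y)\cong\mathrm{Ext}^i_{\Lambda_n}(X,Y)$, respecting the internal grading, for all $B_n$-modules $X,Y$ and all $i$. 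Since $\Lambda_n$ is Koszul, $\mathrm{Ext}^i_{\Lambda_n}(S_I,S_J)$ is concentrated in internal degree $i$, so the same holds over $B_n$ and $B_n$ is Koszul; the comparison also gives $\gl B_n\le\gl\Lambda_n=n$. For the reverse inequality I would read off from the explicit Koszul resolution that $\mathrm{Ext}^n_{\Lambda_n}(S_{\{1\}},S_{[n]\setminus\{1\}})\neq0$ (the two vertices sit at distance $n$), and since $\{1\}$ and $[n]\setminus\{1\}$ are both nonempty for $n\ge2$ this $\mathrm{Ext}$ survives in $B_n$, whence $\gl B_n=n$.

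For the corner I would apply Lemma~\ref{Lem;idemp} with $A=B_n$ and $e=f$: as $\gl B_n=n<\infty$, it suffices to show $\pd_{C}\big(fB_n(1-f)\big)<\infty$, where $C=(1-f)B_n(1-f)$. Writing $f=\bar e_K$ and $N=\bigoplus_{\emptyset\ne I\ne K}S/(x^I)$, the module $fB_n(1-f)$ is, as a $C=\End_R(N)$-module, the Hom-module $\Hom_R(N,S/(x^K))$, and a finite $C$-projective resolution of it would follow from a finite resolution of $S/(x^K)$ by objects of $\add(N)$ that remains exact under $\Hom_R(N,-)$. Constructing such a resolution---equivalently, lifting the truncated Koszul resolution of the $\Lambda_n$-simple $S_K$ through the stratifying quotient $\Lambda_n\to B_n$ and restricting to the surviving vertices---is the step I expect to be the main obstacle, since corners of finite-global-dimension algebras need not themselves have finite global dimension and the self-referential resolution one gets directly does not terminate on its own. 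Once this finiteness is secured, Lemma~\ref{Lem;idemp} yields $\gl C<\infty$, and tracking the length of the resolution gives the bound $2n-1$ announced in Theorem~\ref{thmsnc}.
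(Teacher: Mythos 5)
Your treatment of the first two assertions is essentially the paper's own. The isomorphism is obtained exactly as in the paper by the block computation $(\Lambda_n e_\emptyset\Lambda_n)_{I,J}=x^JS$ versus $(\Lambda_n)_{I,J}=x^{J\setminus I}S$, identifying each quotient block with $\Hom_R(R/(x^I),R/(x^J))$; and your ``stratifying ideal'' argument is just a repackaging of the fact the paper invokes from Auslander--Platzeck--Todorov, namely that when $\Lambda_n e\Lambda_n$ is projective the $B_n$-projective resolutions of the $B_n$-simples are obtained from the $\Lambda_n$-Koszul resolutions by deleting the copies of $P_\emptyset$. Either formulation gives Koszulity and $\gldim B_n=n$ (your explicit nonvanishing $\Ext^n_{\Lambda_n}(S_{\{1\}},S_{[n]\setminus\{1\}})$ is a cleaner justification of the lower bound than the paper's one-line assertion).

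The genuine gap is the one you flag yourself: you never establish $\pd_C\bigl(fB_n(1-f)\bigr)<\infty$ for $C=(1-f)B_n(1-f)$, and without it Lemma~\ref{Lem;idemp} cannot be applied, so the third assertion is unproved. The step you call ``the main obstacle'' is in fact immediate from what you already have, and it is exactly how the paper closes the argument: in the Koszul resolution of the simple $S_K$ over $B_n$, the projective $P_J$ occurs precisely in homological degree $d(K,J)$, so each projective occurs at most once and $P_K$ occurs only in degree $0$. Hence $\ker(P_K\to S_K)$ admits a finite projective resolution by the $P_J$ with $J\neq K$. Applying the exact functor $(-)\cdot(1-f)$ kills $S_K$ and sends each such $P_J=e_JB_n$ to $e_JB_n(1-f)$, a direct summand of $C$ and therefore $C$-projective, yielding a projective $C$-resolution of $fB_n(1-f)=e_KB_n(1-f)$ of length at most $n-1$. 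Your worry that ``the self-referential resolution does not terminate on its own'' does not apply here precisely because $d(K,K)=0$ forbids $P_K$ from reappearing in positive degrees. With this observation inserted, Lemma~\ref{Lem;idemp} applies and your bookkeeping $n+(n-1)=2n-1$ for Theorem~\ref{thmsnc} is correct.
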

\begin{proof}  
 
We can set $e=e_\emptyset$ as in the proof of the above  proposition.
The above proof also shows
$$ e_I\Lambda_n e \Lambda_ne_J = x^J k[x_1,\ldots,x_n].$$
So 
\begin{align*}
e_IB_ne_J & = \frac{x^{J/I} k[x_1,\ldots,x_n]}{x^J}
=\left( \frac{x^J}{x^{I \cap J}} \right) \frac{k[x_1,\ldots,
    x_n]}{(x^J)} \\
& = \frac{(x^J : x^I)}{(x^J)} = \Hom (R/(x^I),R/(x^J)).
\end{align*}
Now Theorem 1.6 and Example 1 of~\cite{APT} shows that since $\Lambda_n e \Lambda_n$ is projective, we obtain the
projective resolutions of the simple modules of $B_n$ by simply
deleting the projectives $P_\emptyset$ from the resolution.  So we
obtain a linear resolution of length at most $n$.  Hence $B_n$ is
Koszul with global dimension $n$.  Lastly, since each projective only
appears once in the resolution of a given simple module, if we remove
an idempotent $f=e_K$, we can replace $B_nf=P_K$ with its projective
resolution over $B_n$, which will not involve $P_K$.  This is
explained in more detail in~\cite{IngallsPaquette}.
\end{proof}

\subsection{Computation of global dimension} \label{Sub:computation}

In this section we consider the problem of computing the global dimension of an endomorphism ring $\End_R M$ of a finitely generated module $M$ over a commutative noetherian Henselian ring $R$. First we state some well-known facts about the structure of finitely generated algebras over local noetherian Henselian rings $R$ and about quivers related to these rings. We always assume that $R$ is Henselian, since one needs Krull-Schmidt (see e.g. \cite{Reiner75} exercise 6.6). Using quivers of $\End_R M$ we give an algorithm for the explicit computation of the global dimensions of the algebras. \\
It is useful to recall the following: if $\Lambda$ is a noetherian ring with $\gl \Lambda < \infty$, then its global dimension is determined by the projective dimension of the simple $\Lambda$-modules, i.e.,
\begin{equation} \label{Equ:gldim}
\gl (\Lambda)= \sup \{ \pd_{\Lambda} S: S \text{ is a simple } \Lambda-\text{module} \},
\end{equation}
see e.g., \cite{McConnellRobson} 7.1.14. If $A$ is a finitely generated algebra over a noetherian ring then the result holds with no assumptions on the global dimension of $A$, cf. \cite{Bass69}: $\mathrm{rt.} \gl A= \sup\{ \pd_A S$: $S$ a simple right $A$-module$\}$. Recall that the right global dimension of a  ring $A$ is equal to its left global dimension if $A$ is noetherian \cite{AuslanderDimensionIII}, so we can speak of its global dimension.

For Artinian algebras there is a well-known structure theorem of projective modules (see e.g., Theorem 6.3 and Corollary 6.3a of \cite{Pierce82}). In particular the indecomposable projective modules of an Artinian algebra $A$ are in one to one correspondence with the simple  $A/{ \bf J}$-modules, where ${\bf J}=\mathrm{rad}(A)$ is the Jacobson radical of $A$. \\
 In the following let $A$ be a finitely generated $R$-algebra. In this case a similar structure theorem holds (see \cite{Reiner75} Theorem 6.18, 6.21 and Cor. 6.22):

\begin{Thm} \label{Thm:structurecompletelocal}
Let $A$ be a  $R$-algebra, which is finitely generated as $R$-module, where $R$ is  local noetherian commutative Henselian. Denote by $\bar A=A/{\bf J}$, where ${\bf J}=\mathrm{rad}A$ is the Jacobson radical of $A$. Then $\bar A$ is a semi-simple Artinian ring. \\
Suppose that $1=e_1 + \cdots + e_n$ is a decomposition of $1 \in A$ into orthogonal primitive idempotents in $A$. Then 
$$A=e_1A \oplus \cdots \oplus e_nA$$
is a decomposition in indecomposable right ideals of $A$ and 
$$\bar A=\bar e_1 \bar A  \oplus \cdots \oplus  \bar e_n \bar A$$
is a decomposition of $\bar A$ into minimal right ideals. Moreover, $e_iA \cong e_jA$ if and only if $\bar e_i\bar A\cong \bar e_j \bar A $.
\end{Thm}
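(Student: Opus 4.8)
The plan is to deduce everything from the standard structure theory of \emph{semiperfect} rings, the only input special to our situation being that the Henselian hypothesis on $R$ forces $A$ to be semiperfect; the remainder is formal, and I would cite \cite{Reiner75} (Theorems~6.18, 6.21 and Corollary~6.22) or \cite{CR81} for the general package. First I would check that $\bar A = A/{\bf J}$ is semisimple Artinian: by the Corollary following Lemma~\ref{simplesupportm} one has $\mf{m}A \subseteq \rad A = {\bf J}$, where $\mf{m}$ is the maximal ideal of $R$, so $\bar A$ is a quotient of $A/\mf{m}A$, a finite-dimensional algebra over the field $R/\mf{m}$ since $A$ is a finitely generated $R$-module. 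Hence $\bar A$ is Artinian with $\rad \bar A = 0$, and an Artinian ring with vanishing radical is semisimple by Artin--Wedderburn. This gives the first assertion.

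The crucial point is that idempotents lift modulo ${\bf J}$, i.e.\ that $A$ is semiperfect. Since $R$ is Henselian and $A$ is module-finite over $R$, idempotents lift along the surjection $A \to A/\mf{m}A$, and because ${\bf J} \supseteq \mf{m}A$ they lift along $A \to \bar A$ as well. From semiperfectness I would extract the dictionary between idempotents of $A$ and of $\bar A$: an idempotent $e$ is primitive in $A$ exactly when $\bar e$ is primitive in $\bar A$. I would justify this by observing that $\End_A(eA) \cong eAe$ is again a module-finite $R$-algebra, hence local (equivalently, $e$ primitive) precisely when $eAe/\rad(eAe) \cong \bar e \bar A \bar e$ is a division ring, which is exactly the condition for $\bar e$ to be primitive in the semisimple ring $\bar A$.

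With semiperfectness in hand the remaining statements are formal. From the given decomposition $1 = e_1 + \cdots + e_n$ into orthogonal primitive idempotents we obtain $A = \bigoplus_i e_iA$, with each $e_iA$ indecomposable since $e_i$ is primitive; reducing modulo ${\bf J}$ gives $\bar A = \bigoplus_i \bar e_i \bar A$, and each $\bar e_i \bar A$ is a minimal right ideal because $\bar e_i$ is a primitive idempotent of the semisimple ring $\bar A$. For the last equivalence, note that any isomorphism $e_iA \cong e_jA$ carries the radical $e_i{\bf J}$ onto $e_j{\bf J}$ and so descends to an isomorphism of tops $\bar e_i \bar A \cong \bar e_j \bar A$; conversely $e_iA$ is a projective cover of $\bar e_i \bar A$, and uniqueness of projective covers over a semiperfect ring (equivalently, Krull--Schmidt for finitely generated $A$-modules) forces $e_iA \cong e_jA$ whenever $\bar e_i \bar A \cong \bar e_j \bar A$.

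The main obstacle, and the only place where the hypotheses go beyond ``$A$ module-finite over a local ring'', is the lifting of idempotents: this is precisely where the Henselian property of $R$ enters. Once semiperfectness is established, every other step is a routine consequence of the semisimplicity of $\bar A$ together with the primitive-idempotent dictionary.
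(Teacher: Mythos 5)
Your proposal is correct, and it is essentially the same approach the paper takes: the paper offers no proof of this theorem at all, citing only Theorems~6.18, 6.21 and Corollary~6.22 of \cite{Reiner75}, and your argument is precisely the standard semiperfect-ring package behind that citation (semisimplicity of $\bar A$ via $\mf{m}A \subseteq {\bf J}$, idempotent lifting from Henselianness, the primitivity dictionary through $eAe$, and uniqueness of projective covers). The only step worth stating more carefully is the lifting of idempotents along $A \to \bar A$: the containment ${\bf J} \supseteq \mf{m}A$ alone does not suffice, and one should add that ${\bf J}/\mf{m}A = \rad(A/\mf{m}A)$ is a nilpotent ideal of the Artinian ring $A/\mf{m}A$, so idempotents lift along $A/\mf{m}A \to \bar A$ and then along $A \to A/\mf{m}A$ by the Henselian hypothesis.
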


This theorem says that the indecomposables summands of $A$ are of the
form $P_i=e_iA$. By definition, the $P_i$ are the indecomposable
projective modules over $A$. The modules $S_i=P_i / ({\bf J}\cap P_i)$ are the simple modules over $A$ (as well as over the semi-simple algebra $\bar A$) and $P_i \lra S_i \lra 0$ is a projective cover.

\begin{Bem}
The most general setting for which Thm.~\ref{Thm:structurecompletelocal} holds, are semi-perfect rings, i.e., rings over which any finitely generated (right) module has a projective cover, see \cite{Hazewinkeletc}, Section 10.3f.  
\end{Bem}

For endomorphism rings $A=\End_R(M)$, where $M=\bigoplus_{i=1}^n M_i$ and $R$ is as before, the above discussion leads to a method for the computation of $\gl A$. Write $A$ in matrix form 
\begin{equation} \label{Equ:matrixalg}
A=\begin{pmatrix} \Hom_R (M_1,M_1) & \cdots &  \Hom_R (M_n,M_1) \\ \vdots & \ddots & \vdots \\  \Hom_R (M_1,M_n) & \cdots &  \Hom_R (M_n,M_n)\end{pmatrix}.
\end{equation}
Then $e_1, \ldots, e_n$, where $e_i$ is the $n \times n$ matrix with
$1$ as its $ii$-entry and $0$ else, form a complete set of orthogonal
idempotents. By Thm.~\ref{Thm:structurecompletelocal} the projectives
of $A$ are the rows $P_i=e_iA$ and the simples are $S_i=P_i /
  ({\bf  J}\cap P_i)$, $i=1, \ldots, n$. By (\ref{Equ:gldim}), $\gl (A)= \max_i\{
\pd_A(S_i)\}$.

\subsubsection{Projective resolutions of the simples} \label{Subsub:Computation}
In order to find the resolutions of the simples, we present  the following method, which uses the quiver of $\End_R M$ (we mostly follow the exposition in \cite{Hazewinkeletc}, chapter 11). 
Let $R$ be as above a local Henselian ring and suppose that $M=\bigoplus_{i=1}^nM_i$ is a finitely generated $R$-module such that any indecomposable  summand $M_i$ appears with multiplicity 1 (then $A=\End_RM$ is called \emph{basic}). This is no serious restriction since $\End_R(\bigoplus_{i=1}^n M_i^{a_i})$ is Morita-equivalent to $\End_R M$. Moreover, assume that $A$ is \emph{split}, i.e., $A/{\bf J}$ is a product of matrix algebras over the residue field. This is the case when the residue field of $R$ is separably closed (e.g., residue field of characteristic $0$). \\

{\bf The quiver of $A$:} Since $A$ is split and basic we have that $A/{\bf J}$ is a product
of copies of the residue field, one for each indecomposable direct
summand $M_i$.  By Thm. \ref{Thm:structurecompletelocal} we have a complete set of idempotents $e_i =id_{M_i}
\in \End(M_i)$ which lifts the idempotents in $A/{\bf J}$. By Prop.~11.1.1 of \cite{Hazewinkeletc} the Jacobson radical of $A$ is of the form 
$${\bf J}_{ii}=\rad (e_i A e_i)=\rad( \End_R M_i) \ \text{ and } \  {\bf J}_{ij}=e_i A e_j= \Hom(M_j,M_i) \text{ for } i \neq j.$$
  
 We define the quiver of $A$ to
be the quiver of the Artinian algebra $A/{\J}^2$.   
To obtain the square of ${\bf J}$ one computes $\sum_k e_i{\J}e_k {\J} e_j.$
The quiver of $A$ has vertices corresponding to the $M_i$ which we
will label simply as $i$.  The number of arrows from $i \rightarrow j$
is given by the length of $e_i {\bf J}/{\bf J}^2 e_j$.  We already
know that each vertex of the quiver corresponds to a simple module and
also to its projective cover.  

{\bf Projective covers:}
Semiperfect rings $A$ are characterized by the fact that all finitely generated $A$ modules have projective covers, Theorem 10.4.8~\cite{Hazewinkeletc}.  To describe the projective cover
we introduce the notion of the \emph{top of a module}, denoted by $\top M = M/{\J} M$.  The top is the largest semi-simple quotient of $M$.  If the $\top M = \bigoplus S_i^{n_i}$, then the projective cover
of $M$ is the projective cover of $\top M$, namely $\bigoplus_i P_i^{n_i} = \bigoplus_i (e_iA)^{n_i}$, see the 
remark proceeding Theorem 10.4.10~\cite{Hazewinkeletc}.
Furthermore, note that $\Hom(P_i,P_j) = e_jAe_i$.

{\bf Projective resolution of the simple modules}: Choose a vertex
$i$.  Let $S_i$ be the simple module at $i$.  Its projective cover is
$P_i = e_iA$.  Write $K_i$ for the kernel of the natural map $P_i
\rightarrow S_i$.  Now $K_i  = \J P_i$ and so $\top K_i = ({\J} P_i)/({\J}^2 P_i)$.
So the projective cover of $P_i$ is given by the projectives in the quiver of 
$A$ with arrows $i \leftarrow j$.
The projective resolution of $S_i$ begins as follows:
$$ \bigoplus_{j\leftarrow i} P_j \rightarrow P_i \rightarrow S_i
\rightarrow 0.$$
The maps from $P_j \rightarrow P_i$ are lifts of arrows in $e_i( {\J}
/{\J}^2)e_j$.
Now to continue, one needs to compute the top of the kernel of this map and repeat.

\begin{ex} \label{Ex:glE6}
 (Global dimension of Leuschke's endomorphism rings for the $E_6$ curve) Let $R=k[[x,y]] /(x^3+y^4)$, where $k$ is a field of characteristic $0$, be the coordinate ring of the $E_6$-curve. It is well-known that $R$ is of finite $\CM$-type, see \cite[p.79f]{Yoshino90} for a description of the indecomposables and notation. One can write $R$ as $k[[ t^4,t^3]]$, where $x=t^4, y=t^3$ is a parametrization of the $E_6$-curve. Computing the chain of rings of \cite{Leuschke07} one obtains that  $R_0=R$, $R_1=t^{-3} M_1$, where $M_1=(x^2,y)$ and $R_2=\widetilde R= t^{-6}B$, where $B=(x^2,xy,y^2)$. In parametric description $R_1$ is the semi-group generated by $1, t^3,t^4,t^5$, that is, $R_1$ is isomorphic to the coordinate ring of the singular space curve $k[[t^3,t^4,t^5]]\cong k[[x,y,z]]/(x^3-yz,y^2-xz,z^2-x^2y)$. The next ring $R_2\cong k[[t]]$ is the semigroup generated by $1,t$. Write now $M=\bigoplus_{i=0}^2 R_i$. By \cite{Leuschke07} the endomorphism ring $A=\End_R M$ has finite global dimension. We claim that $\gl A=3$. By the above, it is sufficient to compute the projective resolutions of the three simples $S_1, S_2, S_3$. The matrix description of $A$ is
$$A=\begin{pmatrix} R & \mf{m}_R &  t^6 R_2 \\  R_1 & R_1 & t^3 R_2 \\  R_2 &  R_2&  R_2\end{pmatrix},$$
where $\mf{m}_R$ is the maximal ideal of $R$, that is, the semigroup $t^3, t^4, t^6, \ldots$. 
For the computation of the quiver, we change to additive notation for the semigroups:  write  $m+\langle n_1,\ldots,n_k \rangle$  as an ideal in a subalgebra of $k[[t]]$.  Using this notation we have 
$$A= \begin{pmatrix} \langle 0,3,4 \rangle & \langle 3,4 \rangle &  6+\langle 0,1 \rangle \\
\langle 0,3,4,5 \rangle & \langle 0,3,4,5 \rangle & 3+\langle 0,1 \rangle \\
\langle 0,1 \rangle & \langle 0,1 \rangle & \langle 0,1 \rangle \end{pmatrix}.$$
The radical of $A$ are the off-diagonal elements and the respective maximal ideals on the diagonal, see the discussion above: 
$${\J}= \begin{pmatrix} \langle 3,4 \rangle & \langle 3,4 \rangle &  6+\langle 0,1 \rangle \\
\langle 0,3,4,5 \rangle & \langle 3,4,5 \rangle & 3+\langle 0,1 \rangle \\
\langle 0,1 \rangle & \langle 0,1 \rangle & \langle 1 \rangle \end{pmatrix}$$
Hence we compute ${\J}/{\J}^2$ to be 
$${\J}/{\J}^2 = \begin{pmatrix} \cdot & 3,4 & \cdot \\
0 &  \cdot & 3 \\
\cdot & 0  & 1 \end{pmatrix}.$$
So the quiver of $A$ is:
\[
\begin{tikzpicture}
\node at (4,0) {\begin{tikzpicture} 
\node (C1) at (0,0)  {$1$};
\node (C2) at (1.75,0)  {$2$};
\node (C3b) at (3.6,-0.05) {};
\node (C3) at (3.50,0) {$3$};
\draw [->,bend left=45,looseness=1,pos=0.5] (C1) to node[inner sep=0.5pt,fill=white]  {$\scriptstyle t^4$} (C2);
\draw [->,bend left=20,looseness=1,pos=0.5] (C1) to node[inner sep=0.5pt,fill=white]  {$\scriptstyle t^3$} (C2);
\draw [->,bend left=20,looseness=1,pos=0.5] (C2) to node[inner sep=0.5pt,fill=white] {$\scriptstyle 1$} (C1);
\draw [->,bend left=20,looseness=1,pos=0.5] (C2) to node[inner sep=0.5pt,fill=white]  {$\scriptstyle t^3$} (C3);
\draw [->,bend left=20,looseness=1,pos=0.5] (C3) to node[inner sep=0.5pt,fill=white] {$\scriptstyle 1$} (C2);
\draw[->]  (C3) edge [in=40,out=-40,loop,looseness=8,pos=0.5] node[right] {$\scriptstyle t$} (C3);
\end{tikzpicture}};
\end{tikzpicture}
\]

where the relations are clear from the labels.
Thus the three simples are $S_1=(k,0,0), S_2=(0,k,0), S_3=(0,0,k)$. The minimal projective resolutions are as follows (with $P_i=e_iA$):
$$
0 \longleftarrow   S_1 \longleftarrow
P_1  \stackrel{(t^3,t^4)} {\longleftarrow}  P_2 \oplus P_2
\stackrel{\begin{pmatrix} t^4 \\-t^3\end{pmatrix}}\longleftarrow P_3
\longleftarrow 0, $$ $$
0 \longleftarrow   S_2 \longleftarrow  P_2
\stackrel{(1,t^3)}{\longleftarrow}   P_1 \oplus
P_3\stackrel{\begin{pmatrix} t^3 & t^4 \\-1 & -t \end{pmatrix}}{\longleftarrow} P_2 \oplus P_2
\stackrel{\begin{pmatrix} t^4 \\-t^3\end{pmatrix}}{\longleftarrow} P_3 \longleftarrow 0 $$ $$
0 \longleftarrow   S_3 \longleftarrow    P_3
\stackrel{(1,t)}{\longleftarrow}   P_2 \oplus P_3
\stackrel{\begin{pmatrix} t^3 \\ -t^2 \end{pmatrix}}{\longleftarrow} P_3 \longleftarrow 0.
$$

\end{ex}

\section{Acknowledgments} 
The authors want to thank Ragnar Buchweitz for the inspiration for this work and many illuminating discussions. More thanks for very helpful discussions go to Osamu Iyama, Graham Leuschke, Steffen Oppermann, Charles Paquette and Bernd Ulrich.  We are especially  grateful to Michael Wemyss for detailed comments on an earlier version of this article, which alerted us to many subtleties of the literature and prompted a major revision. We thank the anonymous referee for careful reading and helpful comments. We also want to thank the MSRI and the RIMS for providing excellent work conditions and a stimulating atmosphere.

\bibliographystyle{plain}
\bibliography{biblioMCM}
\end{document}